\documentclass{mcom-l}

% Be sure to use PDF Latex
\pdfoutput=1

% put #1 to use low-res images
\newcommand{\CompressedVersion}[2]{#1}

% hyperref, really useful, can deactivate if it causes problems
\usepackage[colorlinks=true, linkcolor=blue,citecolor=blue, urlcolor=blue]{hyperref}
\usepackage[latin1]{inputenc}
\usepackage{mystyle,amssymb,dsfont,amsmath,mathtools}

\newtheorem{theorem}{Theorem}[section]
\newtheorem{proposition}[theorem]{Proposition}
\newtheorem{lemma}[theorem]{Lemma}
\theoremstyle{definition}
\newtheorem{definition}[theorem]{Definition}
\newtheorem{example}[theorem]{Example}

\renewcommand{\paragraph}[1]{\subsubsection{#1}}

\newtheorem{assumptions}{Assumptions}{\bfseries}{\rmfamily}
 
% path to images
\CompressedVersion{\graphicspath{{./images-low/}} % low res
}{\graphicspath{{./images/}}} % high res

\begin{document}

\title[Scaling Algorithms for Unbalanced Optimal Transport Problems]{Scaling Algorithms for\\Unbalanced Optimal Transport Problems}

%\author{Lenaic~Chizat \and Gabriel~Peyr\'e \and Bernhard~Schmitzer \and Fran\c cois-Xavier~Vialard}
%\institute{CEREMADE, CNRS, Universit\'e Paris-Dauphine \\
%	Project team Mokaplan, INRIA \\
%             \email{\{chizar,peyre,schmitzer,vialard\}@ceremade.dauphine.fr} }

\author[L. Chizat]{L\'ena\"ic Chizat}
\address{CEREMADE, CNRS, Universit\'e Paris-Dauphine, INRIA Project team Mokaplan}
\curraddr{}
\email{\{chizat,schmitzer,vialard\}@ceremade.dauphine.fr}
\thanks{}

\author[G. Peyr\'e]{Gabriel~Peyr\'e}
\address{CNRS and DMA, \'Ecole Normale Sup\'erieure, INRIA Project team Mokaplan}
\curraddr{}
\email{gabriel.peyre@ens.fr}
\thanks{}

\author[B. Schmitzer]{Bernhard~Schmitzer}

\author[F-X. Vialard]{Fran\c cois-Xavier~Vialard}

%    \subjclass is required.
\subjclass[2010]{Primary: 90C25, secondary: 65K10, 68U10}

% 90C25 : convex programming
% 65K10 : numerical analysis : Optimization
% 68U10 : image processing

\date{}

\dedicatory{}

% !TEX root = ../EntropicNumeric.tex 

\begin{abstract}
	This article introduces a new class of fast algorithms to approximate variational problems involving unbalanced optimal transport.
	While classical optimal transport considers only normalized probability distributions, it is important for many applications to be able to compute some sort of relaxed transportation between arbitrary positive measures.
	A generic class of such ``unbalanced'' optimal transport problems has been recently proposed by several authors.
	In this paper, we show how to extend the, now classical, entropic regularization scheme to these unbalanced problems. 
	This gives rise to fast, highly parallelizable algorithms that operate by performing only diagonal scaling (i.e.\ pointwise multiplications) of the transportation couplings. They are generalizations of the celebrated Sinkhorn algorithm. 
	We show how these methods can be used to solve unbalanced transport, unbalanced gradient flows, and to compute unbalanced barycenters. We showcase applications to 2-D shape modification, color transfer, and growth models.  
	\keywords{Optimal transport \and Wasserstein distance \and unbalanced transport \and Bregman projections \and Wasserstein barycenters}
	% \PACS{PACS code1 \and PACS code2 \and more}
\end{abstract}

\maketitle

% !TEX root = ../EntropicNumeric.tex

\section{Introduction}

Optimal transport (OT) is a standard way to lift a metric defined on some ``ground'' space $X$ to a metric on probability distributions (positive Radon measures with unit mass) $\Mm_+(X)$. Initially formulated by Monge~\cite{Monge1781} as a non-convex optimization problem over transport maps, its modern formulation as a linear program is due to Kantorovitch~\cite{Kantorovich42}, and it has been revitalized thanks to the groundbreaking work of Brenier~\cite{Brenier91}. We refer to the monographs~\cite{cedric2003topics,SantambrogioBook} for a more detailed background on the theory of OT. 

While initially developed by theoreticians, OT is now becoming popular in applied fields, and we refer for instance to application for color manipulation in image processing~\cite{RabinPapadakisSSVM}, reflectance interpolation in computer graphics~\cite{Bonneel-displacement}, image retrieval in computer vision~\cite{RubTomGui00} and statistical inference in machine learning~\cite{Solomon-ICML}. A key limitation of classical OT is that it requires the input measures to be normalized to unit mass, which is a problematic assumption for many applications that require either to handle arbitrary positive measures and/or to allow for only partial displacement of mass. All these applications might indeed benefit from OT algorithms that can handle mass variation (creation or destruction) as well as mass transportation.

While many proposals have been made to account for these ``unbalanced'' optimal transport problems, they used to be tailored for specific applications. There have been several recent works (reviewed below) that try to put all these proposals (and some new ones) into a common generic framework. These emerging theoretical advances call for algorithms that extend existing fast OT methods to these new unbalanced problems. It is precisely the goal of this paper to show how a popular numerical approach to OT -- namely entropic regularization -- does extend in a very natural and efficient way to solve a variety of unbalanced problems, including unbalanced OT, barycenters and gradient flows.

%%%%%%%%%%%%%%%%%%%%%%%%%%%%%%%%%%%%%%%%%%%%%%%%%%
\subsection{Previous Works}
\label{sec: previous works}
%%%
\paragraph{Algorithms for optimal transport.}

The Kantorovitch formulation of OT as a linear program, when restricted to sums of Diracs, can be directly tackled using simplex or interior point methods. For the special case of optimal linear assignment (i.e.\  for sums of the same number of uniform-mass Diracs) one can also use combinatorial algorithms such as the Hungarian method~\cite{Burkard09} or the auction algorithm~\cite{Bertsekas-OptimalTransport-1989}. The time complexity of these algorithms is roughly cubic in the number of Diracs, and hence they do not scale to very large problems.

In the specific case of the squared Euclidean cost, it is possible to make use of the geodesic structure of the OT distance, and reparametrize this as a convex optimization problem, as proposed by Benamou and Brenier~\cite{benamou2000computational}, see also~\cite{2014-papadakis-siims} for a discussion on the use of first order non-smooth optimization schemes.
In \cite{SchmitzerShortCuts2015} a multi-scale algorithm is developed that consistently leverages the structure of geometric transport problems to accelerate linear program solvers.

A last class of approaches deals with semi-discrete problems, when one measure has a density, and the second one is a weighted sum of Dirac masses. This problem, introduced by Alexandrov and Pogorelov as a theoretical tool, can be solved with geometric tools when using the squared Euclidean cost~\cite{AurenhammerHA98} (see also~\cite{Merigot11} for the development of efficient algorithms using methods from computational geometry and~\cite{Levy3d} for 3-D computations).

%%%%
\paragraph{Entropic transport.}

These computational methods thus cannot cope with large scale problems with arbitrary transportation costs. A recent class of approaches, initiated and revitalized by the paper of Marco Cuturi~\cite{CuturiSinkhorn}, proposes to compute an approximate transport coupling using entropic regularization. This idea has its origins in many different fields, most notably it is connected with Schr\"odinger's problem in statistical physics~\cite{Schroedinger31,LeonardSchroedinger} and with the iterative scaling algorithm by Sinkhorn~\cite{Sinkhorn64} (also known as IPFP~\cite{DemingStephanIPFP}) which, given a square matrix with positive entries, aims at finding two vectors of positive numbers---so-called scalings---that makes it a bistochastic matrix after multiplying rows and columns by these vectors. 
This entropic smoothing can be interpreted as a strictly convex barrier for positivity, but its main computational advantage is that it leads to very simple closed form expressions for all steps of the algorithm, which would not be possible when using different regularization functionals.
Several follow-up articles~\cite{CuturiBarycenter,2015-benamou-cisc} to~\cite{CuturiSinkhorn} have shown that the same strategy can also be used to tackle the computation of barycenters for the Wasserstein distance (as initially formulated by~\cite{agueh2015optimal}), and for solving OT problems on geometric domains (such as regular grids or triangulated meshes) using convolution and the heat diffusion kernel~\cite{2015-solomon-siggraph}. 
Some theoretical properties of this regularization are studied in~\cite{2015-carlier-convergence}, including the $\Gamma$-convergence of the regularized problem toward classical transport when regularization vanishes. 

\paragraph{Optimization with Bregman divergences.}

The success of this entropic regularization scheme is tightly linked with use of the Kullback-Leibler (KL) divergence as a natural Bregman divergence~\cite{bregman1967relaxation} for the optimization on the space of positive Radon measures. Not only is this divergence quite natural, but it also leads to simple formulas for the computation of projectors and so-called proximal operators (see below for a definition) for many functions typically involved in OT.  
The most simple algorithm, which is actually at the heart of Sinkhorn's iterations, is the iterative projection on affine subspaces for the KL divergence.
A refined version of these iterations, which works for arbitrary convex sets (not just affine spaces) is the so-called Dykstra's algorithm~\cite{Dykstra83}, which can be interpreted (just like iterative projections) as an iterative block-coordinates minimization on a dual problem.  
Dykstra's algorithm is known to converge when used in conjunction with Bregman divergences~\cite{bauschke-lewis,CensorReich} (see~\cite{2015-Peyre-siims} for details on the underlying idea for sums of two arbitrary functions).
Many other first order proximal methods for Bregman divergences exists. The most simple one is the proximal point algorithm~\cite{EcksteinProxPoint}, but most proximal splitting schemes have been extended to this setting, such as for instance ADMM~\cite{WangBanerjee-ADMM}, primal-dual schemes~\cite{ChambollePock-div} and forward-backward~\cite{NguyenFB2015}.

The algorithm we propose in this article can be seen as special instance of Dykstra's iterations, but with an extremely simple (both conceptually and algorithmically) structure, which we refer to as a ``scaling'' algorithm. It extends Sinkhorn's iterations to more complex problems. This structure is due to the fact that the functions involved in OT problems make use of the marginals of the couplings that are being optimized. 

%%%%
\paragraph{Unbalanced transport: from theory to numerics.}

There has been a large number of proposals to extend OT methods to arbitrary ``unbalanced'' positive measures. Let us for instance quote the Kantorovitch-Rubinstein dual-Lipschitz norms~\cite{hanin1992kantorovich}, optimal partial transport~\cite{figalli2010optimal} and geodesic computations with source terms~\cite{piccoli2014generalized,OTmaasrumpf}. Most of these approaches, and much more, can be seen as special instances of a generic class of OT-like problems, that have been proposed independently in~\cite{LieroMielkeSavareLong} and~\cite{2015-chizat-unbalanced}.
These unbalanced problems can be formulated in several ways, that are equivalent (under some restrictive conditions on the cost): a geodesic (dynamic) formulation with a source term~\cite{kondratyev2015,2015-chizat-interpolating,LieroMielkeSavareShort}, a static formulation with two semi-couplings~\cite{2015-chizat-unbalanced} and a static formulation with approximate marginal constraints~\cite{LieroMielkeSavareLong}. 
From a numerical perspective, the last formulation (approximate marginals constraints) is the most simple to handle, since, as we show in the following section, it only involves a minor modification of the initial linear program, that has to be turned into a convex problem involving $\phi$-divergences (see Definition \ref{def_divergence}). This is the one that we consider in this article. 
Note that the use of such a relaxed formulation, in conjunction with entropic smoothing has been introduced, without proof of convergence, in~\cite{FrognerNIPS} for application in machine learning.

%%%
\paragraph{Gradient flows.}

Beyond OT problems and barycenter problems, a popular use of Wasserstein distances is to study gradient flows, following the formalism of ``minimizing movements'' detailed in~\cite{ambrosio2006gradient}. This corresponds to discrete implicit stepping (i.e.\ proximal maps) for the Wasserstein distance (instead of more common Euclidean or Hilbertian metrics). In some cases, these time-discrete flows can be shown to converge to continuous flows that solve a suitable PDE, as the temporal stepsize tends to 0. The most famous example is the gradient flow of the entropy for the Wasserstein metric, which solves the diffusion equation~\cite{jordan1998variational}. Non-linear PDEs are considered for example in~\cite{otto2001geometry}. An application to imaging can be found in~\cite{Burger-JKO}. Another use of these implicit steps is to construct minimizing flows of non-smooth functionals, for instance to model crowd motions~\cite{maury2010macroscopic}. 

A large variety of dedicated numerical schemes has been proposed for spatial discretization and solving of these time-discrete flows, such as for instance finite differences~\cite{burger2010mixed}, finite volumes~\cite{CarrilloFiniteVolume} and Lagrangian schemes~\cite{JDB-JKO,benamou2015augmented}. A bottleneck of these schemes is the high computational complexity due to the resolution of an OT-like problem at each time step. The use of entropic regularization has been proposed recently in~\cite{2015-Peyre-siims} and studied theoretically in~\cite{2015-carlier-convergence}. A chief advantage of this approach is that each step can be solved efficiently on a regular grid with fast Sinkhorn iterations involving only Gaussian convolutions, but this comes at the price of additional diffusivity introduced by the approximation, which makes it unsuitable to capture sharp features of solutions. 

It is possible to extend these gradient flows by replacing the Wasserstein distance by more general unbalanced distances. This allows to define flows over arbitrary positive measures, hence involving creation and destruction of mass, which is crucial to model growth phenomena, such as for instance the Hele-Shaw model of tumor evolution~\cite{PerthameTumor}. An analysis of such flows based on a splitting scheme has been recently provided in~\cite{GallouetMonsaingeon2015}. It is one of the goals of this article to propose a versatile algorithm, based on iterative scalings, to approximate numerically these unbalanced flows.

%%%%%%%%%%%%%%%%%%%%%%%%%%%%%%%%%%%%%%%%%%%%%%%%%%
\subsection{Contributions and Outline}
The main contribution of this article is to define a class of iterative scaling algorithms to solve the entropic approximation of a variety of unbalanced optimal transport problems. 
%
\iffalse
More precisely, these convex optimization schemes can be used to minimize the sum of a linear cost and several other convex functions that depends only on the marginals of the optimized couplings. 
%
This includes as special case classical OT, unbalanced OT, unbalanced barycenters and unbalanced gradient flows. 
%
The main advantages of this approaches are that it is simple (it only involves matrix multiplication and pointwise elementary operations), quite generic (it applies to most known OT-like problems), enjoy fast convergence (linear convergence is observed, and known to hold for classical OT), is highly parallelisable, and can benefit from special structure of the domain (such as for instance axis-separability on squared regular grid, where iterations are thus one order of magnitude faster).
%
We showcase the application of these methods to color transfer, 2-D shapes deformations and to the dynamics of tumor growth.
\fi
%
First, in Section \ref{sec-ot-like}, we exhibit a common structure to most optimization problems related to OT: a nonnegativity constraint, a linear transport cost and convex functions acting on the marginals of the optimized couplings. 
For solving the entropic regularization of these problems, we then introduce in Section \ref{sec_algorithm_analysis} a generic ``scaling'' algorithm which is a direct generalization of Sinkhorn's algorithm. 
In a continuous setting, we show under some assumptions that the iterates are well defined and correspond to alternating optimization on the dual and we prove linear convergence in a particular key case. 
In a discrete setting, we show in Section \ref{sec-algorithm} that this algorithm converges as soon as the dual problem is well-posed. We also give a simple description of the algorithm, introduce a stabilization scheme to reach very small values of the regularization parameter and sketch a generalization of this algorithm.
Finally, in Section \ref{sec_applications}, we showcase the application of these methods to 1-D and 2-D unbalanced optimal transport, generalizations of Wasserstein barycenters and gradient flows with growth.
The main advantages of our approach are that it is simple (it only involves matrix multiplication and pointwise elementary operations), quite generic (it applies to most known OT-like problems), enjoys fast convergence (linear convergence is observed, and show in particular cases) and is highly parallelizable.
The code to reproduce the results of this article is available online\footnote{\url{https://github.com/lchizat/optimal-transport}}.
Note also that an efficient numerical implementation of the scaling algorithm developed in this paper is studied
further in~\cite{SchmitzerScaling2016}, see
Section~\ref{sec:RelationSparseScaling} for a  discussion.

%%%%%%%%%%%%%%%%%%%%%%%%%%%%%%%%%%%%%%%%%%%%%%%%%%
\subsection{Notation}
\label{sec_notation}
The space of nonnegative finite Radon measures\footnote{Borel measures which are inner regular. If $T$ is Polish, all Borel measures are inner regular.}  on a (Hausdorff) topological space $T$ is denoted by $\Mm_+(T)$, and the vector space it generates by $\Mm(T)$. For a measure on a product space $\mu\in \Mm(X \times Y)$, $P^X_\# \mu$ (or sometimes $P^1_\# \mu$ if $X=Y$) denotes its first marginal and $P^Y_\# \mu$ (or $P^2_\# \mu$) its second marginal.

We denote by $\d t, \d x, \dots$ the reference measures on some measurable spaces $T,X,\dots$. They are not assumed to be the Lebesgue measure (but most of the time, they are probability measures). If $(T,\d t)$ is a measured space, we denote by $\Lun(T)$ and $\Linf(T)$ the usual spaces of equivalence classes of measurable functions from $T$ to $\RR$ which are, respectively, absolutely integrable and essentially bounded. When there is a subscript $+$ appended to the name of a space of functions, it refers to the cone of nonnegative functions in that space.

Divergence functionals (or $\phi$-divergences) are denoted by a calligraphic letter $\Divergm$ when they act on measures (as defined in Definition \ref{def_divergence}), by straight letters $\Diverg$ when they act on functions (as defined in \eqref{eq_divergencefunctions}) and with an overline $\ol{\Diverg}$ when they act on two real numbers (as in \eqref{eq_divergencefunctions}). More generally, functionals on measures are denoted by calligraphic letters ($\mathcal{F}, \mathcal{G},\dots$) and functionals on functions by straight capital letters ($F,G,\dots$).

The same convention holds for the Kullback-Leibler divergence (which is just a particular case of divergence functionals) thus denoted by $\KLm$, $\KL$ or $\ol{\KL}$ depending on the nature of its arguments. Moreover, we generalize the definition of $\KL$ to families of functions as follows: if $(X, \d x)$ and $(Y,\d y)$ are two measured spaces and $r,s : X\times Y \to \RR^n$ are families of measurable functions, the KL-divergence is defined as
\eql{
\label{eq_KLdens}
\KL(r|s) \eqdef \sum_i \int_{X\times Y} \left[ \log \left(\frac{r_i(x,y)}{s_i(x,y)} \right) r_i(x,y) -r_i(x,y) +s_i(x,y) \right]\d x \d y 
}
(with $0\log(0/0)=0$) if $r,s\geq0$ a.e.\ and $(s_i(x,y)=0) \Rightarrow (r_i(x,y)=0)$ a.e., and $\infty$ otherwise.

The generalization of some notations to families of functions is often implicit. For instance, if $(r_i)_{i=1}^n$ and $(s_i)_{i=1}^n$ are two families of functions,  we write
\eq{
\langle r,s \rangle \eqdef \sum_{i=1}^n \int_{X \times Y} r_i(x,y)\, s_i(x,y) \d x \d y
}
and the projection operators are defined componentwise, for $i\in \{1,\dots,n\}$, as
\eql{\label{eq_ProjectionDens}
(P^X_\# r)_i (x) = \int_Y r_i(x,y) \d y \qandq (P^Y_\# r)_i (y) = \int_X r_i(x,y) \d x  \, .
}

If $X$ and $Y$ are finite spaces (i.e.\ contain only a finite number of points), we represent functions on $X$ by vectors denoted by bold letters $\mathbf{a}$ and functions on $X\times Y$ by matrices denoted by capital bold letters $\mathbf{A}$. In this context the notations $\mathbf{a}\odot\mathbf{b}$ and $\mathbf{a}\oslash\mathbf{b}$ denote, respectively entrywise multiplication and entrywise division with convention $0/0=0$ between vectors.

The conjugate of a convex function $f$ is denoted by $f^*$ and its subdifferential is denoted by $\partial f$. Some reminders on convex analysis are given in Appendix \ref{sec:ApxConvex}. The indicator of some convex set $C$ is 
\eq{
	\iota_C(a) = \choice{
		0 \qifq a \in C, \\
		+\infty \quad \text{otherwise}.
		}
}

Finally, the proximal operator plays a central role in this article. In full generality, if $E$ a set, $\Diverg: E\times E\to \RR_+\cup \{\infty\}$ is a divergence which measures ``closeness'' between points in $E$, it is defined for $F:E\to \RR\cup \{\infty\}$ and $x\in E$ as
\eq{
\prox^D_F(x) = \argmin_{y\in E} F(y) + \Diverg(y|x)\, .
}
%proximal
%\paragraph{Proximal map.}
%For a function $f : \Mm_+(X) \rightarrow \RR \cup \{+ \infty\}$, its proximal operator with respect to the KL divergence is
%\eq{
%	\prox^{\KL}_{f}(\mu) \eqdef \uargmin{\nu\in \Mm_+(X) } \KL(\nu|\mu) + f(\nu).
%}
%The same definition is used to define the proximal operator of functions defined on $\Mm_+(X \times X)$.

% !TEX root = ../EntropicNumeric.tex

%%%%%%%%%%%%%%%%%%%%%%%%%%%%%%%%%%%%%%%%%%%%%%%%
\section{Unifying Formulation of Transport-like Problems}
\label{sec-ot-like}
We review below a variety of variational problems related to optimal transport that can all be recast as a generic variational problem, involving transport and functions on the marginals. The notion of ``divergence'' functionals, introduced in optimal transport by \cite{LieroMielkeSavareLong}, makes this unification possible and is defined in the following.

\subsection{Divergence Functionals}
\label{sec_divergencefunc}
Divergences are functionals which, by looking at the pointwise ``ratio'' between two measures, give a sense of how close they are. They have nice analytical and computational properties and are built from \emph{entropy functions}.

\begin{definition}[Entropy function]
\label{def_entropy}
A function $\phi : \RR \to \RR \cup \{\infty\}$ is an entropy function if it is lower semicontinuous, convex, $\dom \phi\subset [0,\infty[$ and satisfies the following feasibility condition:  $\dom \phi \; \cap\;  ]0, \infty[\; \neq \emptyset$. The speed of growth of $\phi$ at $\infty$ is described by %its recession (or horizon) function
\eq{
\phi'_\infty = \lim_{x\uparrow +\infty} \varphi(x)/x \in \RR \cup \{\infty\} \, .
}
%
%A \emph{parametric} entropy function is a normal integrand $\phi : T\times \RR \to \RR \cup \{\infty\}$ such that for all $t\in T$, $\phi_t \eqdef \phi(t,\cdot)$ is an entropy function.
\end{definition}

If $\phi'_\infty = \infty$, then $\phi$ grows faster than any linear function and $\phi$ is said \emph{superlinear}. Any entropy function $\phi$ induces a $\phi$-divergence (also known as Csisz\'ar divergence) as follows.
\begin{definition}[Divergences]
\label{def_divergence}
Let $\phi$ be an entropy function.
For $\mu,\nu \in \Mm(T)$, let $\frac{\d \mu}{\d \nu} \nu + \mu^{\perp}$ be the Lebesgue decomposition\footnote{The Lebesgue decomposition Theorem asserts that, given $\nu$, $\mu$ admits a unique decomposition as the sum of two measures $\mu^s + \mu^\perp$ such that $\mu^s$ is absolutely continuous with respect to $\nu$ and $\mu^\perp$ and $\nu$ are singular.} of $\mu$ with respect to $\nu$. The divergence $\Divergm_\phi$ is defined by
\eq{
\Divergm_\phi (\mu|\nu) \eqdef \int_T \phi\left(\frac{\d \mu}{\d \nu} \right) \d \nu 
+ \phi'_\infty \mu^{\perp}(T)
}
if $\mu,\nu$ are nonnegative and $\infty$ otherwise.
\end{definition}%
The proof of the following Proposition can be found in~\cite[Thm 2.7]{LieroMielkeSavareLong}.
\begin{proposition}
If $\phi$ is an entropy function, then $\Divergm_\phi$ is jointly $1$-homogeneous, convex and weakly* lower semicontinuous in $(\mu,\nu)$.
\end{proposition}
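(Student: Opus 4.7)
The plan is to tackle the three properties separately: $1$-homogeneity by direct computation, convexity via the perspective function of $\phi$, and weakly-$*$ lower semicontinuity via a dual variational formula (the dual formula also yields a cleaner alternative proof of convexity).

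For $1$-homogeneity, observe that the Lebesgue decomposition is equivariant under joint positive scaling: if $\mu = (\d\mu/\d\nu)\,\nu + \mu^{\perp}$, then for any $\lambda>0$ the decomposition of $\lambda\mu$ with respect to $\lambda\nu$ is $\lambda\mu = (\d\mu/\d\nu)(\lambda\nu) + \lambda\mu^{\perp}$, with the Radon--Nikodym density unchanged. Both terms in the definition of $\Divergm_\phi$ then pick up exactly one factor of $\lambda$. For convexity, I would pick a common dominating measure $\sigma = \mu_0+\mu_1+\nu_0+\nu_1$ and write $\mu_i = f_i\,\sigma$, $\nu_i = g_i\,\sigma$ with $f_i, g_i \in \Lun_+(\sigma)$. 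Splitting $T$ into $\{g_i>0\}$ and $\{g_i=0\}$, a short computation rewrites the divergence as $\Divergm_\phi(\mu_i|\nu_i) = \int_T \psi(f_i,g_i)\,\d\sigma$, where $\psi(x,y) = y\,\phi(x/y)$ for $y>0$ and $\psi(x,0)=\phi'_\infty\, x$ for $x\geq 0$ (and $+\infty$ otherwise). Since $\psi$ is the perspective of the convex function $\phi$, it is jointly convex on $[0,\infty)^2$, so integrating the pointwise convexity inequality against $\sigma$ gives joint convexity of $\Divergm_\phi$.

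For weakly-$*$ lower semicontinuity, the plan is to establish the dual representation
\[
\Divergm_\phi(\mu|\nu) \;=\; \sup_{(\alpha,\beta)} \left\{ \int_T \alpha\,\d\mu \,+\, \int_T \beta\,\d\nu \right\},
\]
the supremum being taken over pairs of bounded continuous $(\alpha,\beta)$ satisfying the pointwise constraint $\beta(x) + \phi^*(\alpha(x)) \leq 0$, with $\phi^*$ the Legendre conjugate. Each functional inside the supremum is linear and weakly-$*$ continuous in $(\mu,\nu)$, so their supremum is automatically jointly convex and weakly-$*$ lower semicontinuous. The delicate step, and the main obstacle, is proving this primal--dual equality. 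The easy direction combines Fenchel--Young on the absolutely continuous part with the bound $\alpha \leq \phi'_\infty$ that feasibility forces on the singular support (otherwise $\phi^*(\alpha) = +\infty$), yielding $\int\alpha\,\d\mu + \int\beta\,\d\nu \leq \Divergm_\phi(\mu|\nu)$. Attaining equality requires an approximation argument: on the a.c.\ part take $\alpha \approx \phi'(\d\mu/\d\nu)$ and $\beta = -\phi^*(\alpha)$, and on the singular support push $\alpha$ up to $\phi'_\infty$ while concentrating $\beta$ on a shrinking $\nu$-null neighbourhood so as to preserve the pointwise constraint. This is a classical Rockafellar-type result on convex integral functionals, and precisely the content of~\cite[Thm 2.7]{LieroMielkeSavareLong} invoked by the authors.
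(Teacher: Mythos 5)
The paper offers no proof of its own here---it defers entirely to \cite[Thm 2.7]{LieroMielkeSavareLong}---so the only meaningful comparison is with that reference, and your argument is essentially the one carried out there: homogeneity from the scaling equivariance of the Lebesgue decomposition, joint convexity from the (closed) perspective function of $\phi$, and weak* lower semicontinuity from the dual representation as a supremum of weak*-continuous linear functionals. Your outline is sound; the one genuinely delicate step, the attainment direction of the duality formula (which needs inner regularity of the measures for the approximation on the singular part), is precisely the content you correctly attribute to the cited theorem.
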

%In the first case, some conditions could preserve lower semicontinuity even in the parametric case, but we chose to avoid this technicalities. See \cite{bouchitte1988integral}.
%
The Kullback-Leibler divergence, also known as the relative entropy, plays a central role in this article.
\begin{example}
\label{ex_KLdiv}
The Kullback-Leibler divergence $\KLm \eqdef \Divergm_{\phi_{\KL}}$ is the divergence associated to the entropy function $\phi_{\KL}$, given by
\eql{\label{eq-defn-kl}
	\phi_{\KL}(s)= \begin{cases}
		s\log(s)-s+1 & \textnormal{for } s>0 , \\
		1 & \textnormal{for } s=0 , \\
		+\infty & \textnormal{otherwise.}
		\end{cases}
}
When restricted to densities on a measured space, it is also the Bregman divergence~\cite{bregman1967relaxation} associated to (minus) the entropy.
\end{example}
The following divergences will also be considered as examples.
\begin{example}
The total variation distance $\TV(\mu|\nu) \eqdef |\mu-\nu|_{\TV}$ is the divergence associated to:
\eql{\label{eq-defn-tv}
	\phi_{\TV}(s)= \begin{cases}
		|s-1| & \textnormal{for } s\geq0 , \\
		+\infty & \textnormal{otherwise.}
		\end{cases}
}
\end{example}
\begin{example}
The equality constraint $\iota_{\{=\}}(\mu| \nu)$ which is $0$ if $\mu=\nu$ and $\infty$ otherwise, is the divergence associated to $\varphi=\iota_{\{1\}}$.
\end{example}
\begin{example}
\label{ex_RG}
One can generalize the latter and define a ``range constraint'', denoted $\RG_{[\alpha,\beta]}(\mu|\nu)$, as the divergence which is zero if $\alpha \, \nu \leq \mu \leq \beta \, \nu$ with $0\leq\alpha\leq\beta\leq \infty$, and $\infty$ else. This is the divergence associated to $\phi = \iota_{[\alpha, \beta]}$.
\end{example}

%%%%%%%%%%%%%%%%%%%%%%%%%%%%%%%%%%%%%%%%%%%%%%%%
\subsection{Balanced Optimal Transport}
\label{subsec_balanced}

Using divergences, the classical ``balanced'' optimal transport problem can be defined as follows.

\begin{definition}[Balanced Optimal Transport]
Let $X$ and $Y$ be Hausdorff topological spaces, let $c : X \times Y \rightarrow \RR \cup \{\infty\}$ be a lower semi-continuous function. The optimal transport problem between $\mu\in \Mm_+(X)$ and $\nu\in \Mm_+(Y)$ is
\eql{\label{eq-ot}
	 \uinf{\gamma \in \Mm_+(X\times Y)}  \int_{X\times Y} \!\!\! c\, \d\gamma  
	 + \iota_{\{=\}}(P^X_\# \gamma | \mu) 
	 + \iota_{\{=\}}(P^Y_\# \gamma | \nu) .
}
\end{definition}

\begin{proposition}
If $c$ is lower bounded and \eqref{eq-ot} is feasible, then the infimum is attained.
\end{proposition}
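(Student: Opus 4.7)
The plan is a standard application of the direct method in the calculus of variations, and the proof should pass to weak-$*$ limits along a minimizing sequence.

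First I would take a minimizing sequence $(\gamma_n)_{n\in\mathbb{N}}$ in $\mathcal{M}_+(X\times Y)$ with marginals $\mu$ and $\nu$; feasibility guarantees this sequence is nonempty, and by dropping finitely many terms one may assume $\int c\,\d\gamma_n$ is finite for every $n$. The marginal constraints force $\gamma_n(X\times Y)=\mu(X)=\nu(Y)$, so the total masses of the $\gamma_n$ are uniformly bounded. Next I would establish tightness of $(\gamma_n)$: since $\mu$ and $\nu$ are inner regular (by our standing assumption on $\mathcal{M}_+$) one can choose, for every $\varepsilon>0$, compact sets $K_X\subset X$ and $K_Y\subset Y$ with $\mu(X\setminus K_X)<\varepsilon$ and $\nu(Y\setminus K_Y)<\varepsilon$. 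Then $K_X\times K_Y$ is compact in $X\times Y$ and
\[
\gamma_n\bigl((X\times Y)\setminus(K_X\times K_Y)\bigr)\leq \mu(X\setminus K_X)+\nu(Y\setminus K_Y)<2\varepsilon,
\]
giving tightness. By Prokhorov's theorem, a subsequence (not relabeled) converges weakly-$*$ to some $\gamma^\star\in\mathcal{M}_+(X\times Y)$.

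Then I would check that $\gamma^\star$ is admissible. The projection maps $P^X,P^Y$ are continuous, so for any $\varphi\in C_b(X)$ the functional $\gamma\mapsto \int \varphi\circ P^X\,\d\gamma$ is weak-$*$ continuous, and this yields $P^X_\#\gamma^\star=\mu$ (and likewise for the other marginal). Finally, since $c$ is lower semicontinuous and bounded below, the linear transport functional is weak-$*$ lower semicontinuous on $\mathcal{M}_+(X\times Y)$ by the standard approximation of $c$ from below by continuous bounded functions (one writes $c-\inf c$ as the supremum of an increasing sequence in $C_b(X\times Y)^+$ and applies monotone convergence together with weak-$*$ convergence). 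Combining these with the fact that the marginal penalties $\iota_{\{=\}}(\cdot|\mu)$ and $\iota_{\{=\}}(\cdot|\nu)$ vanish on $\gamma^\star$ yields
\[
\int c\,\d\gamma^\star\leq \liminf_{n\to\infty}\int c\,\d\gamma_n = \inf\eqref{eq-ot},
\]
so $\gamma^\star$ is a minimizer.

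The only mildly delicate step is tightness when $X$ or $Y$ is non-compact, which is handled by the inner regularity of the prescribed marginals as above; lower semicontinuity of the transport cost is classical once $c$ is lower semicontinuous and bounded below, and the continuity of marginals under weak-$*$ convergence is immediate from testing against bounded continuous functions.
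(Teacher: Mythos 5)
The paper does not prove this proposition; it simply refers to Villani's monograph, where the standard argument is exactly the direct method you describe (minimizing sequence, tightness inherited from the fixed marginals, Prokhorov, stability of the marginals under narrow convergence, and lower semicontinuity of $\gamma\mapsto\int c\,\d\gamma$ via approximation of the lower-bounded l.s.c.\ cost from below by bounded continuous functions). Your proof is correct and is essentially that classical argument. Two small points worth tightening: first, if $\inf\eqref{eq-ot}=+\infty$ then any feasible $\gamma$ is already a minimizer, so you may indeed assume the infimum is finite, but this should be said rather than silently assumed when you discard terms with infinite cost; second, the auxiliary tools you invoke (Prokhorov, determinacy of finite measures by $C_b$ test functions, and writing $c-\inf c$ as an increasing countable supremum of bounded continuous functions) are standard on Polish or metric spaces, which is the setting of the cited reference, whereas the proposition is stated for general Hausdorff spaces --- in that generality one should either restrict to Radon measures with the corresponding Radon versions of these theorems or simply note that the statement is taken in the Polish setting.
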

The proof of this result, as well as  a general theory of optimal transport can be found in \cite{Villani-OptimalTransport-09}.
If one interprets $c(x,y)$ as the cost of transporting a unit mass from $x$ to $y$, the problem is then to find the cheapest way to move a mass initially distributed according to $\mu$ toward the distribution $\nu$. If $\gamma$ is additionally constrained to be of the form $(\mathrm{id}\times T)_\# \mu$ for some map $T:X\to Y$, then \eqref{eq-ot} is called the Monge problem.

\begin{example}
An important special case is when $X=Y$ and $c$ is the power of a distance on $X$. Then, the optimal cost (i.e. the value of~\eqref{eq-ot}) is itself the power of a distance on the space of probability measures on $X$. This distance is often referred to as ``Wasserstein'' distance and denoted by $\Wass$, although this denomination is disputed. We refer to \cite[Chap.\ 6, Bibliographical Notes]{Villani-OptimalTransport-09} for a discussion of the historical context.
\end{example}

%%%%%
\subsection{Unbalanced Optimal Transport}
\label{subsec_unbalanced}
%%%%%% Start Update B
Standard optimal transport only allows meaningful comparison of measures with the same total mass: whenever $\mu(X) \neq \nu(Y)$, there is no feasible $\gamma$ in \eqref{eq-ot}. Several propositions have been made to circumvent this limitation by defining ``unbalanced'' transport problems in various dynamic and static formulations (see Section~\ref{sec: previous works}).
The following formulation with relaxed marginal constraints is best suited for the numerical schemes presented in this article.

\begin{definition}[Unbalanced Optimal Transport {\protect \cite[Problem 3.1]{LieroMielkeSavareLong}}]
	\label{def:SoftMarginalFormulation}
	Let $X$ and $Y$ be Hausdorff topological spaces, let $c: X\times Y \rightarrow [0,\infty]$ be a lower semi-continuous function and let $\Divergm_{\varphi_1}$, $\Divergm_{\varphi_2}$ be two divergences over $X$ and $Y$, as in Definition \ref{def_divergence}. For $\mu \in \Mm_+(X)$ and $\nu \in \Mm_+(Y)$, the unbalanced soft-marginal transport problem is
	\begin{align}	
		\label{eq-unbalanced-ot}
		 \inf_{\gamma \in \Mm_+(X\times Y)}
			\int_{X \times Y}\!\!\! c\,\d\gamma + \Divergm_{\varphi_1}(P^X_\# \gamma | \mu)
			+ \Divergm_{\varphi_2}(P^Y_\# \gamma | \nu)\,.
	\end{align}
\end{definition}
\begin{proposition}

\label{prop_UOT_existence}
Assume that \eqref{eq-unbalanced-ot} is feasible. If $\phi_1$ and $\phi_2$ are superlinear, then the infimum is attained. This is also the case if $c$ has compact sublevel sets and $\phi'_{1\infty} + \phi'_{2\infty} + \inf c > 0$.
\end{proposition}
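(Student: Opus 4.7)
The plan is to apply the direct method of the calculus of variations: take a minimizing sequence $(\gamma_n) \subset \Mm_+(X\times Y)$, show that it is tight, extract a weak* cluster point $\gamma^\star$, and conclude from joint weak* lower semicontinuity of each of the three summands---supplied for the divergences by the previous Proposition, and valid for the linear cost since $c$ is nonnegative and lsc.

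The first step I would carry out is a uniform bound on $\gamma_n(X\times Y)$. The key tool is the elementary affine lower bound on any entropy function $\phi$: for every $\epsilon > 0$ there exists $C_\epsilon \geq 0$ such that $\phi(s) \geq (\phi'_\infty - \epsilon)\, s - C_\epsilon$ for all $s \geq 0$, obtained from a supporting hyperplane at a point of $\dom\phi$ with slope close to $\phi'_\infty$ (with $\phi'_\infty - \epsilon$ read as an arbitrarily large $M$ in the superlinear case). Applied to the Lebesgue decomposition this yields, for any $\sigma,\rho \in \Mm_+(T)$,
\[
\Divergm_\phi(\sigma|\rho) \;\geq\; (\phi'_\infty - \epsilon)\,\sigma(T) - C_\epsilon\,\rho(T).
\]
Summing over the two marginals and using $\int c\, d\gamma_n \geq (\inf c)\,\gamma_n(X\times Y)$, the objective $F(\gamma_n)$ of \eqref{eq-unbalanced-ot} then satisfies
\[
F(\gamma_n) \;\geq\; \bigl(\inf c + \phi'_{1\infty} + \phi'_{2\infty} - 2\epsilon\bigr)\,\gamma_n(X\times Y) \;-\; C_\epsilon\bigl(\mu(X) + \nu(Y)\bigr).
\]
Under (b) I would pick $\epsilon$ small enough that the leading coefficient is positive; under (a) the same bound with $\phi'_{i\infty} = +\infty$ forces $\gamma_n(X\times Y)$ to be bounded regardless of $c$.

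With uniform mass control in hand, tightness follows. Under (b), compactness of the sublevel sets $\{c \leq R\}$ and the Markov-type inequality $\gamma_n(\{c > R\}) \leq R^{-1}\int c\, d\gamma_n$ immediately give the exterior-mass estimate. Under (a), superlinearity of $\phi_i$ forces $P^X_\#\gamma_n \ll \mu$ and $P^Y_\#\gamma_n \ll \nu$ (else the singular part contributes $+\infty$), while the uniform bounds on $\int \phi_1(dP^X_\#\gamma_n/d\mu)\,d\mu$ and $\int \phi_2(dP^Y_\#\gamma_n/d\nu)\,d\nu$ together with the de la Vall\'ee--Poussin criterion yield uniform integrability of the marginal densities; tightness of $\mu$ and $\nu$ then lifts to tightness of the marginals, and products of compact exhausting sets give tightness of $(\gamma_n)$. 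In either case Prokhorov extracts a weak* cluster point $\gamma^\star$; since marginal projection is continuous for this convergence (test against $f \otimes 1$ with $f$ bounded continuous on $X$) and the three summands are weak* lsc, $F(\gamma^\star) \leq \liminf_n F(\gamma_n) = \inf F$, and $\gamma^\star$ is a minimizer.

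The main technical difficulty is the mass bound under hypothesis (b): the divergences are lower-bounded only by an affine function whose slope equals the (possibly negative) recession slope $\phi'_\infty$, so without positivity of $\phi'_{1\infty} + \phi'_{2\infty} + \inf c$ the coefficient in the displayed estimate can be nonpositive and mass might escape to infinity at no cost. Tightness itself is then essentially automatic under either hypothesis, since (a) and (b) each supply their own distinct source of concentration---superlinear growth of $\phi_i$ on the one hand, compact sublevels of $c$ on the other.
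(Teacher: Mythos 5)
Your argument is correct, but note that the paper does not actually prove this proposition: it states it and defers entirely to \cite{LieroMielkeSavareLong} (``The proof of this result \dots can be found in''). Your direct-method proof fills that gap and is essentially the standard route, which also matches the structure of the argument in the cited reference: the affine minorant $\phi(s)\geq (\phi'_\infty-\epsilon)s-\phi^*(\phi'_\infty-\epsilon)$ giving $\Divergm_\phi(\sigma|\rho)\geq(\phi'_\infty-\epsilon)\sigma(T)-C_\epsilon\rho(T)$, the resulting mass bound (which closes in case (a) because Definition \ref{def:SoftMarginalFormulation} takes $c\geq 0$, so $\inf c\geq 0$ and one of the slopes may be sent to $+\infty$), tightness either from compact sublevel sets of $c$ via Markov or from de la Vall\'ee--Poussin applied to the marginal densities, and weak* lower semicontinuity of the three terms. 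Two fine points are worth making explicit if you wrote this out in full: first, extracting a cluster point from a tight, mass-bounded sequence on a general Hausdorff space requires the inner regularity (Radon property) built into $\Mm_+$ here --- Prokhorov in this generality is not free, and one should also record that $\mu$ and $\nu$ are themselves tight for the lifting step in case (a); second, since $\phi$ is convex and finite somewhere, $\phi'_\infty\in\,]-\infty,+\infty]$, so the sum $\phi'_{1\infty}+\phi'_{2\infty}+\inf c$ never involves an $\infty-\infty$ ambiguity and the objective is bounded below, which is what makes the minimizing sequence and the mass estimate legitimate.
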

The proof of this result as well as a thorough study of duality properties of this problem can be found in \cite{LieroMielkeSavareLong}.
Note that~\eqref{eq-ot} is a particular case of~\eqref{eq-unbalanced-ot} when the domains of the entropy functions $\phi_i$ are the singleton $\{1\}$. More generally, if the functions $\phi_i$ admit unique minima at $1$, \eqref{eq-unbalanced-ot} can be viewed as a relaxation of the initial problem~\eqref{eq-ot} where the ``hard'' marginal constraints are replaced by ``soft'' constraints, penalizing the deviation of the marginals of $\gamma$ from $\mu$ and $\nu$.
Now we discuss a specific case of particular interest.
\begin{definition}[Wasserstein-Fisher-Rao Distance]
\label{def:WassF}
Take $X=Y$, let $d$ be a distance on $X$ and $\lambda \geq 0$. For the cost
\eql{ \label{logcost}
	c(x,y) = -\log\pa{ \cos_+^2\pa{d(x,y)}}
}
with $\cos_+ : z \mapsto \cos(z\wedge\frac{\pi}{2})$ and $\Divergm_{\phi_1}=\Divergm_{\phi_2}=\lambda \KLm$, we define $\WassF_\lambda(\mu,\nu)$ as the square root of the minimum in \eqref{eq-unbalanced-ot} (as a function of the measures $(\mu,\nu) \in \Mm_+(X)^2$). 
We simply write $\WassF \eqdef \WassF_1$ for $\lambda=1$.
\end{definition}

As shown in~\cite{LieroMielkeSavareLong}, $\WassF$ defines a distance on $\Mm_+(X)$, which is equal to the \emph{Wasserstein-Fisher-Rao} geodesic distance introduced simultaneously and independently in~\cite{LieroMielkeSavareLong,kondratyev2015,2015-chizat-interpolating} (it is named \emph{Hellinger-Kantorovich} in \cite{LieroMielkeSavareLong}).
An alternative static formulation is given in \cite{2015-chizat-unbalanced}.
Other important cases include:
\begin{itemize}
\item The \emph{Gaussian Hellinger-Kantorovich} distance $\GHK$ is obtained by taking the cost $c=d^2$ ($d$ still a metric) and  $\Diverg_{\phi_i}=\lambda \KL$ with $\lambda > 0$. It has been introduced in~\cite{LieroMielkeSavareLong} where it is also shown that when $X$ is a geodesic space, $\WF$ is the geodesic distance generated by $\GHK$.

\item The \emph{optimal partial transport} problem, which is obtained by taking a cost function bounded from below by $-2\lambda$ and $\Diverg_{\phi_i}=\lambda \TV$, with $\lambda>0$. Originally, optimal partial transport refers to a problem where the marginal constraints are replaced by a constraint on the total mass of the marginals (and domination constraints); here $\lambda$ is the Lagrange multiplier associated to the mass constraint (see \cite{caffarelli2010free}).
\item With the divergence $\RG$ as in example \ref{ex_RG}, one imposes a range constraint on the marginal:
\eq{
\RG(P^X_\# \gamma | \mu) < + \infty
\qquad \Leftrightarrow \qquad
\alpha \mu \leq P^X_\# \gamma \leq \beta \mu
}
(and similarly for $P^Y_\# \gamma$).
%This problem will be referred to as \emph{optimal range transport}.
\end{itemize}

The following result is a minor extension of results from \cite{LieroMielkeSavareLong,2015-chizat-unbalanced} and is useful if one wishes to vary the parameter $\lambda$ in numerical applications.

\begin{proposition}
$\WassF_\lambda$ defines a distance if $0\leq\lambda\leq1$ (degenerate if $\lambda=0$). The upper bound on $\lambda$ is necessary when $X$ is a geodesic space of diameter greater than $\pi/2$.
\end{proposition}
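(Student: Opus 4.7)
My plan is to prove the result in two parts: sufficiency ($0 \leq \lambda \leq 1$) by reducing to the already-established case $\lambda = 1$ via a reparametrization of the ground metric, and necessity ($\lambda > 1$) by an explicit three-point triangle violation on any geodesic space of diameter greater than $\pi/2$.

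For sufficiency, the case $\lambda = 0$ is trivially degenerate since $\gamma = 0$ makes all three terms vanish. For $\lambda \in (0, 1]$, dividing the objective in Definition~\ref{def:WassF} by $\lambda$ and rewriting $(-\log\cos_+^2(d))/\lambda = -\log\cos_+^2(\tilde d)$ with $\tilde d(x,y) := \arccos(\cos_+(d(x,y))^{1/\lambda})$ (so $\tilde d(x,y) = \pi/2$ whenever $d(x,y) > \pi/2$) yields the scaling identity $\WassF_\lambda(\mu,\nu)^2 = \lambda \cdot \WassF_1^{\tilde d}(\mu,\nu)^2$, where $\WassF_1^{\tilde d}$ denotes the $\WassF_1$ construction with $\tilde d$ in place of $d$. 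Since $\WassF_1$ is known to be a distance whenever its ground metric is, it suffices to show that $\tilde d$ is a metric. Non-degeneracy, symmetry and vanishing on the diagonal are immediate; the triangle inequality, by monotonicity of $f(u) := \arccos(\cos(u)^{1/\lambda})$ and the triangle inequality for $d$, reduces to the subadditivity of $f$ on $[0,\pi/2]$, which follows from concavity together with $f(0)=0$.

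Concavity of $f$ is the main technical step. Implicit differentiation of $\cos f(u) = \cos(u)^{1/\lambda}$ yields $f'(u) = \tan(u)/(\lambda\tan f(u))$; differentiating a second time and using the identity $\sin(2\theta)\tan\theta = 2\sin^2\theta$, the inequality $f''(u)\le 0$ reduces to $\lambda \sin^2 f(u) \leq \sin^2 u$. Setting $c := \cos^2 u \in [0,1]$ and $\alpha := 1/\lambda \geq 1$, this is equivalent to $g(c) := \alpha(1-c) + c^\alpha - 1 \geq 0$. We have $g(1) = 0$ and $g'(c) = \alpha(c^{\alpha-1}-1) \leq 0$ on $[0,1]$ for $\alpha \geq 1$, so $g$ is non-increasing with $g(1)=0$, hence $g \geq 0$ throughout. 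The constant extension of $f$ on $[\pi/2,\infty)$ preserves concavity since the left derivative at $\pi/2$ dominates the right derivative $0$.

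For necessity, fix $\lambda > 1$ and pick $x, y$ in $X$ with $d(x,y) = \pi/2 + \delta$ for small $\delta > 0$ together with a point $z$ on a minimizing geodesic from $x$ to $y$ such that $d(x,z) = 2\delta$ and $d(z,y) = \pi/2 - \delta$. When $\mu = \delta_p$ and $\nu = \delta_q$, the KL terms force any admissible coupling to be a nonnegative multiple of $\delta_{(p,q)}$; optimizing that multiplier gives $\WassF_\lambda(\delta_p,\delta_q)^2 = 2\lambda(1 - \cos_+(d(p,q))^{1/\lambda})$, which equals $2\lambda$ when $d(p,q) > \pi/2$ (the transport cost is then infinite, forcing the multiplier to $0$). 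Expanding as $\delta \to 0^+$ gives $\WassF_\lambda(\delta_x,\delta_y) = \sqrt{2\lambda}$, $\WassF_\lambda(\delta_x,\delta_z) = 2\delta(1+o(1))$, and $\WassF_\lambda(\delta_z,\delta_y) = \sqrt{2\lambda}(1 - \tfrac{1}{2}\delta^{1/\lambda} + o(\delta^{1/\lambda}))$. The candidate triangle inequality rearranges to $\delta^{1/\lambda - 1} \leq 4/\sqrt{2\lambda} + o(1)$, which fails as $\delta \to 0^+$ because $1/\lambda - 1 < 0$ for $\lambda > 1$. The main obstacle overall is the concavity computation, but the reduction to the algebraic inequality $\alpha(1-c) + c^\alpha \geq 1$ makes it elementary.
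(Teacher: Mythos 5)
Your proof is correct. The sufficiency half ($0\le\lambda\le 1$) is essentially the paper's argument: divide the objective by $\lambda$, absorb the exponent into a reparametrized ground metric $\tilde d=f\circ d$ with $f(u)=\arccos(\cos_+(u)^{1/\lambda})$, and show $f$ is concave on $[0,\pi/2]$ with $f(0)=0$ so that $\tilde d$ is again a metric and the known $\lambda=1$ result applies; your reduction of $f''\le 0$ to the elementary inequality $\alpha(1-c)+c^\alpha\ge 1$ is just a cleaner bookkeeping of the same sign computation the paper performs. The necessity half is where you genuinely diverge. The paper works at $d(x_1,x_2)=\pi/2$ exactly and invokes the geodesic-space structure of $(\Mm_+(X),\WassF_1)$ from Liero--Mielke--Savar\'e (existence of a midpoint measure $\mu$, plus the cone characterization of geodesics to ensure $0<d(x,x_i)<\pi/2$ $\mu$-a.e.), then shows the midpoint strictly beats the endpoints once $\lambda>1$. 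You instead produce a fully explicit three-point violation with Dirac masses at $x,z,y$ on a geodesic with $d(x,y)=\pi/2+\delta$, using the closed form $\WassF_\lambda(\delta_p,\delta_q)^2=2\lambda\bigl(1-\cos_+(d(p,q))^{1/\lambda}\bigr)$ (which is correct: the $\KLm$ terms force $\gamma=m\,\delta_{(p,q)}$ and the optimal $m$ is $e^{-c/(2\lambda)}$), and the asymptotics $\delta^{1/\lambda-1}\to\infty$ kill the triangle inequality. Your route is more elementary and self-contained -- it needs no external results about geodesics in the Hellinger--Kantorovich space -- at the mild cost of an asymptotic expansion and of using a pair at distance strictly greater than $\pi/2$ (which the hypothesis on the diameter does provide); the paper's route is shorter given the cited machinery and makes the geometric mechanism (a genuine midpoint getting strictly cheaper) more transparent.
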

\begin{proof}
The case $\lambda=0$ is trivial. If $\lambda>0$, when dividing \eqref{eq-unbalanced-ot} by $\lambda$, one obtains the new cost $-2\log(\cos_+^{1/\lambda}(d(x,y)))$. But the function $f:d \mapsto \arccos(\cos^{\frac1{\lambda}}(d))$ defined on $[0,\pi/2]$ is increasing, positive, satisfies $\{0\}=f^{-1}(0)$ and for $x\in ]0,\pi/2[$ it holds
\[
f'' = \frac{1}{\la} \frac{\cos^{\frac{1}{\la}-2}}{(1-\cos^{\frac{2}{\la}})^2} \left( 1 - \frac{1}{\la} \sin^2 -\cos^{\frac{2}{\la}}\right)
\, .
\]
From the convexity inequality $\text{sign} [(1-X)^\frac{1}{\la}-1 + \frac{1}{\la} X] = \text{sign} (\frac{1}{\la}-1) $ it follows that $f$ is strictly concave on $[0,\pi/2]$ if $\la<1$ and strictly convex if $\la>1$.
Thus if $\la \leq 1$, $f \circ d$ still defines a distance on $X$ and consequently $\WassF_\lambda$ too. 

If $X$ is a geodesic space of diameter greater than $\pi/2$, take $(x_1,x_2) \in X^2$ such that $d(x_1,x_2)=\pi/2$. From~\cite[Corollary 8.3]{LieroMielkeSavareLong}, $(\Mm(X), \WassF_1)$ is itself a geodesic space. Consequently, there exists a midpoint $\mu \in \Mm_+(X)$, i.e.\ such that \eq{
\WassF_1(\delta_{x_1},\mu) = \WassF_1(\delta_{x_2},\mu) =\frac12 \WassF_1(\delta_{x_1},\delta_{x_2})\, .
}
From \cite[Theorem 8.6]{LieroMielkeSavareLong} and the characterization of geodesics in $\text{Cone}(X)$ it holds $0<d(x,x_i)<\pi/2$ for  $\mu$ a.e.\ $x\in X$. This implies, for $\lambda>1$, and $\mu$ a.e.\ $x$, $-\log(\cos_+^{1/\lambda}(d(x_i,x)))<-\log(\cos_+(d(x_i,x)))$. Thus 
$(\WassF^2_\lambda /\lambda)(\delta_{x_i},\mu)<\WassF^2_1(\delta_{x_i},\mu)$. But, for $\lambda\geq 1$, $(\WassF^2_\lambda/\lambda)(\delta_{x_1},\delta_{x_2})=2\KL(0|1)=2$ this leads to
\eq{
\WassF_\lambda(\delta_{x_1},\mu)+ \WassF_\lambda(\delta_{x_2},\mu) < \WassF_\lambda(\delta_{x_1},\delta_{x_2})
}
and the triangle inequality property is lost. %\qedhere
\end{proof}

%%%%%%%%%%%%%%%%%%%%%%%%%%%%%%%%%%%%%%%%%%%%%%%%%
%%%%%%%%%%%%%%%%%%%%%%%%%%%%%%%%%%%%%%%%%%%%%%%%%

\subsection{Barycenter Problem and Extensions}
\label{subsec_barycenter}

The problem of finding an ``average'' measure $\sigma \in \Mm_+(Y)$ which minimizes the sum of the (possibly unbalanced) transport cost toward every measure of a family $(\mu_k)_{k=1}^n \in \Mm_+(X)^n$ is of theoretical and practical interest (see Section~\ref{sec: previous works}). To formalize this problem, consider a family of costs functions $(c_k)_{k=1}^n $ on $X \times Y$ and a family of divergences $(\Divergm_{\phi_{k,1}},\Divergm_{\phi_{k,2}})_{k=1}^n$. The problem is to solve
\eq{
\inf_{\sigma\in \Mm_+(Y)} \inf_{\substack{(\gamma_k)_{k=1}^n \in \\ \Mm_+(X\times Y)^n}} \sum_{k=1}^n  
			\left( \int_{X\times Y} c_k \, \d \gamma_i  
			+ \Divergm_{\varphi_{k,1}}(P^X_\# \gamma_i | \mu_i)
			+ \Divergm_{\varphi_{k,2}}(P^Y_\# \gamma_i | \sigma) \right) \, .
}
By exchanging the infima, this is equivalent to
\eql{
\label{eq_barycenter}
\inf_{\substack{(\gamma_k)_{k=1}^n \in \\ \Mm_+(X\times Y)^n}} 
	\sum_{k=1}^n  \int_{X\times Y} \!\!\!\! c_k \, \d \gamma_k 
	+ \sum_{k=1}^n \Divergm_{\phi_{i,1}}(P^X_\# \gamma_k | \mu_k) 
	+ \!\! \inf_{\sigma\in \Mm_+(Y)} \sum_{k=1}^n \Divergm_{\phi_{k,2}}(P^Y_\# \gamma_k| \sigma)\, .
}
Note that while the object of interest is the minimizer $\sigma$ and not the family of couplings, we will see in Section \ref{sec_appli_bary} that the computation of $\sigma$ is a byproduct of the ``scaling'' algorithm defined below. 

The two following examples are specific instances of so-called Fr\'echet means, defined in any complete metric space $(E,d)$ as solutions to
\eq{
\argmin_{\sigma \in E} \sum_k \al_k \, d(\mu_k, \sigma)^2
}
where $(\alpha_k)_{k=1}^n$ is a family of nonnegative weights and $(\mu_k)_{k=1}^n\in E^n$.
\begin{example}[Wasserstein barycenters]
Let $X=Y$ and let $c$ be the quadratic cost $(x,y) \mapsto d(x,y)^2$ for a metric $d$ and define $c_k = \alpha_k \, c$. Let all the divergences be the equality constraints. Then \eqref{eq_barycenter} is a formulation of the Fr\'echet means in the Wasserstein space, also known as Wasserstein barycenters (see \cite{agueh2011barycenters}).
\end{example}

\begin{example}[$\WF$ barycenters]
Let  $c$ be as in \eqref{logcost}, define $c_k=\alpha_k \, c$ and let $\Divergm_{\phi_{k,1}} = \Divergm_{\phi_{k,1}}= \alpha_i \KLm$ for all $k\in \{1,\dots,n\}$. Then \eqref{eq_barycenter} is a formulation of Fr\'echet means for the $\WF$ distance.
\end{example}

%%%%%%%%%%%%%%%%%%%%%%%%%%%%%%%%%%%%%%%%%%%%%%%%
%%%%%%%%%%%%%%%%%%%%%%%%%%%%%%%%%%%%%%%%%%%%%%%%

\subsection{Gradient Flows}
\label{subsec_gradientflows}

\paragraph{General outline.}
Given a metric space $(T,d)$ and some lower-semicontinuous function $\mathcal{G} : T \rightarrow \RR \cup \{ \infty\}$ with compact sublevel sets, a time-discrete gradient flow with step size $\tau>0$ starting from an initial point $\mu_{0} \in T$ corresponds to the computation of a sequence $(\mu^\tau_k)_{k\in \NN}$ via
\begin{align*}
	\mu^\tau_0 & \eqdef \mu_0, &
	\mu^\tau_{k+1} & \in \uargmin{\mu \in T} \mathcal{G}(\mu) + \frac{d^2(\mu^\tau_k,\mu)}{2\tau} \, .
\end{align*}
This scheme, which is a particular case of De Giorgi's \emph{minimizing movements}, generalizes the notion of implicit Euler steps to approximate gradient flows in Euclidean spaces, which are recovered when $(T,d)=(\RR^n,|\cdot|)$. From a theoretical point of view, the object of interest is the limit trajectory when $\tau \rightarrow 0$ of the piecewise constant interpolation
\eq{
\mu^\tau (t) \eqdef \mu^\tau_k  \text{ for all } t \in [k\tau, (k + 1)\tau[\, .
}
Initiated by \cite{jordan1998variational}, the study of such flows when $T$ is the space of probability measures endowed with the Wasserstein metric $d=\W$ (see Section \ref{subsec_balanced}) has led to considerable advances in the theoretical study of PDEs and their numerical resolution (see Section~\ref{sec: previous works}).
The recent introduction of ``unbalanced transport'' metrics paves the way for further applications, since it is now possible to consider the whole space of nonnegative measures, endowed for instance with the metric $\WassF$, as considered in \cite{GallouetMonsaingeon2015}.

\paragraph{Minimization problem.}
For gradient flows based on an optimal transport metric, such as $\W$, $\WF$ or $\GHK$, each step requires to solve, after swapping the two infima, a problem of the form
\eql{
\label{eq-grad-flow}
%\inf_{\substack{\gamma \in \\ \Mm_+(X\times X)}}
\inf_{\gamma \in \Mm_+(X\times X)}
%\left\{
\int _{X\times Y} \!\!\! c \, \d \gamma 
+ \Divergm_{\phi_1}(P^1_\# \gamma | \mu_k^\tau) 
+ \!\!\!  \inf_{\mu \in \Mm_+(X) } \left( \Divergm_{\phi_2}(P^2_\# \gamma | \mu) + 2 \tau \mathcal{G}(\mu) \right)
%\right\}
}
where $\phi_1$, $\phi_2$ are entropy functions and $\mathcal{G}$ is a lower semicontinuous functional which we also assume convex.
This problem, as well as variants, involving so-called ``splitting'' techniques (see \cite{GallouetMonsaingeon2015}) fit into the framework developed below. In particular, we show in Section \ref{sec_applicationGF} that the computation of $\mu_{k+1}$ is a byproduct of the minimization of \eqref{eq-grad-flow} with the algorithm defined below.

\paragraph{Flows associated to $\WF$.}
The distance $\WF$ was only introduced recently, so a sound theoretical analysis of the corresponding gradient flows and their limit PDEs is not yet available (and is not the subject of the present article). Meanwhile, we present here heuristic arguments in a smooth setting (see also \cite[Section 3.2]{kondratyev2015}) which lead to the evolution equation
\eql{
\label{eq_GFforWF}
\partial_t \mu = \diverg (\mu \nabla \mathcal{G}'(\mu)) -4 \mu \mathcal{G}'(\mu)
}
for the limit trajectory of $\mu^\tau(t)$ as $\tau \rightarrow 0$, where $\mathcal{G}'(\mu)$ is, if it exists, the unique function such that $\frac{\d }{\d \epsilon} \mathcal{G}(\mu + \epsilon \chi)\vert_{\epsilon=0} = \int_X \mathcal{G}'(\mu) \d \chi$ for all perturbations admissible perturbation $\chi$.
\begin{proof}[Heuristic argument]
When restricted to measures $\mu$ with positive density of Sobolev regularity, $\WF$ is the weak Riemannian metric associated to the tensor
\eq{
g(\mu)(\delta \mu, \delta \mu) \eqdef \inf_{v, \alpha}  \int_X |v(x)|^2\mu(x) \d x + \frac14 \int_X |\alpha(x)|^2 \mu(x) \d x
}
where, for a small variation $\delta \mu$, one searches over the decompositions $\delta \mu = -\diverg(v\mu) + \alpha \mu$ into displacement (given by the velocity field $v\in \Ldeux(X,\mu)^d$) and growth (given by the rate of growth $\alpha\in \Ldeux(X,\mu)$) (see \cite{2015-chizat-unbalanced}).
A new step $\mu_{k+1}^\tau$ is given from $\mu^\tau_k$ through the resolution of
\eq{
\mu_{k+1}^\tau \in \argmin_{\mu\in \mathcal{M}_+(X)} \mathcal{G}(\mu) + \frac{1}{2\tau} \WF^2(\mu,\mu_k^\tau) \, .
}
Searching the minimizer in the form $\mu = \mu^\tau_k +\tau (-\diverg (v\mu^\tau_k) + \alpha \mu_k^\tau)$ with unknown $(v,\alpha)$,
%By informally parametrizing the space $\mathcal{M}_+(X)$ with $(v,\alpha)$ as $\mu = \mu^\tau_k +\tau (-\diverg (v\mu^\tau_k) + \alpha \mu_k^\tau)$, 
this can be rewritten, in first order of $(\tau v,\tau \alpha)$, as
\eq{
\inf_{v,\, \alpha} \mathcal{G}(\mu_k^\tau) + \int_X \left( \tau \mathcal{G}'(\mu_k^\tau) ( \alpha \mu_k^\tau- \diverg (v \mu_k^\tau))  + \frac{\tau^2}{2\tau}  \left( |v(x)|^2  + \frac14 \alpha(x)^2\right) \mu_k^\tau(x)\right) \d x\, .
}
The first order optimality conditions yield $v = -\nabla \mathcal{G}'(\mu)$ and $\alpha = -4\mathcal{G}'(\mu)$. One thus obtains
\eq{
\frac{1}{\tau} (\mu- \mu_k^\tau) = \diverg (\mu \nabla \mathcal{G}'(\mu)) -4 \mu \mathcal{G}'(\mu)
}
which yields \eqref{eq_GFforWF} as $\tau \to 0$.
\end{proof}
%Such gradient flows are a promising tool for both theoretical and numerical concerns in the study of PDEs with varying mass.

%%%%%%%%%%%%%%%%%%%%%%%%%%%%%%%%%%%%%%%%%%%%%%%%
%%%%%

%%%%%%%%%%%%%%%%%%%%%%%%%%%%%%%%%%%%%%%%%%%%%%%%
\subsection{Generic Formulation}
\label{sec:GenericFormulation}

Consider two convex and lower semicontinuous functions $\mathcal{F}_1$ and $\mathcal{F}_2$ defined on $\Mm_+(X)^n$ and $\Mm_+(Y)^n$ respectively. The variety of problems reviewed above can be seen as special cases of
\eql{\label{eq-general}
\min_{\gamma \in \Mm^n_+(X \times Y)} \mathcal{J}(\gamma) 
\quad \text{where} \quad
\mathcal{J}(\gamma)  \eqdef \langle c , \gamma \rangle 
+ \mathcal{F}_1(P^X_\# \gamma) 
+ \mathcal{F}_2(P^Y_\# \gamma)\, 
}
where $\langle c, \gamma \rangle = \sum_{k=1}^n \int c_k \d  \gamma_i $ is a short notation for the total transport cost of a family of couplings $(\gamma_k)_{k=1}^n$ w.r.t.\ a family of cost functions $(c_k)_{k=1}^n$ and the projection operators act elementwise. More precisely, they are recovered with the following choices of functions:
\begin{itemize}
\item Balanced OT \eqref{eq-ot}:  
$\mathcal{F}(\sigma)= \iota_{\{=\}} (\sigma | \mu)$; 
%%%
\item Unbalanced OT \eqref{eq-unbalanced-ot}: 
$\mathcal{F}(\sigma) = \Divergm_{\phi}( \sigma | \mu)$; 
%%%
\item Barycenters \eqref{eq_barycenter}: 
$\mathcal{F}(\sigma) = \inf_{\omega \in \Mm^n(X)} \left\{ \sum_{k=1}^n \Divergm_{\phi_k}(\sigma_k|\omega_k)+ \iota_D(\omega) \right\}$, 
where $D$ is the set of families of measures for which all components are equal,
\begin{equation*}
	D \eqdef \left\{ (\omega_1,\ldots,\omega_n) \in \Mm^n(X) : \omega_i = \omega_j \textnormal{ for } i,j=1,\ldots,n \right\}\,; 
\end{equation*}
%%%
\item Gradient flows \eqref{eq-grad-flow}: 
$ \mathcal{F}(\sigma) =  \inf_{\omega \in \Mm(X) } \left\{ \Divergm_{\phi}(\sigma|\omega) + 2\tau \mathcal{G}(\omega) \right\}$.
\end{itemize}
It is remarkable that, even if $\mathcal{F}$ is sometimes defined through an auxiliary minimization problem, it can still be handled efficiently in some cases by the scaling algorithm detailed below. For instance, functions of the form
\eql{
\label{eq_marginal_functional}
 \mathcal{F}(\sigma) =  \inf_{\omega \in \Mm^n(X) } \sum_{k=1}^n \Divergm_{\phi_k}(\sigma_k|\omega_k) + \mathcal{G}(\omega)\,,
 }
where $\mathcal{G}$ is convex and $(\phi_k)_n$ is a family of entropy functions, are still convex and can model a great variety of problems. A graphical interpretation of these problems is suggested in Figure \ref{fig_generic_problem} along with some examples.

\begin{figure}
\centering
% !TEX root = ../EntropicNumeric.tex

\pgfmathsetmacro{\x}{.8}
\pgfmathsetmacro{\y}{1.8}

\begin{subfigure}{0.45\linewidth} 
\centering
\resizebox{\linewidth}{!}{
\begin{tikzpicture}
\fill[color=black!30, fill=black!5] (-.8,\x-.3) rectangle (4.8,\y+.5);
\draw[left]  (-1,\x/2+\y/2) node {$X$} ;
\fill[color=black!30, fill=black!5] (-.8,-\x+.3) rectangle (4.8,-\y-.5);
\draw[left]  (-1,-\x/2-\y/2) node {$Y$} ;

\draw[color=black!80] (-.2,\y-.2) rectangle (4.2,\y+.2);
\draw[right]  (4.15,\y) node {\small{$\mathcal{G}^X$}} ;
\draw[color=black!80] (-.2,-\y+.2) rectangle (4.2,-\y-.2);
\draw[right]  (4.15,-\y) node {\small{$\mathcal{G}^Y$}} ;

\draw[<->,shorten >=0.1cm,shorten <=.1cm] 	(0,-\x)    	-- 	(0,\x);
\draw[<->,shorten >=0.1cm,shorten <=.1cm] 	(1,-\x)    	-- 	(1,\x);
\draw[<->,shorten >=0.1cm,shorten <=.1cm] 	(4,-\x)    	-- 	(4,\x);

\draw (0,0) node [left] {$c_1$}; \draw (1,0) node [left] {$c_2$}; \draw (4,0) node [left] {$c_n$};
\draw (0,\x/2+\y/2) node [left] {\small{$\phi^X_1$}}; 
\draw (1,\x/2+\y/2) node [left] {\small{$\phi^X_2$}}; 
\draw (4,\x/2+\y/2) node [left] {\small{$\phi^X_n$}};

\draw (0,-\x/2-\y/2) node [left] {\small{$\phi^Y_1$}}; 
\draw (1,-\x/2-\y/2) node [left] {\small{$\phi^Y_2$}}; 
\draw (4,-\x/2-\y/2) node [left] {\small{$\phi^Y_n$}};

\draw (2.7,\x) node [left] {$\dots$}; \draw (2.7,-\x) node [left] {$\dots$};
\draw (2.7,\y) node [left] {$\dots$}; \draw (2.7,-\y) node [left] {$\dots$};

\draw[dotted,shorten >=0.1cm,shorten <=.1cm] 	(0,\x)    	-- 	(0,\y);
\draw[dotted,shorten >=0.1cm,shorten <=.1cm] 	(0,-\x)    	-- 	(0,-\y);
\draw[dotted,shorten >=0.1cm,shorten <=.1cm] 	(1,\x)    	-- 	(1,\y);
\draw[dotted,shorten >=0.1cm,shorten <=.1cm] 	(1,-\x)    	-- 	(1,-\y);
\draw[dotted,shorten >=0.1cm,shorten <=.1cm] 	(4,\x)    	-- 	(4,\y);
\draw[dotted,shorten >=0.1cm,shorten <=.1cm] 	(4,-\x)    	-- 	(4,-\y);

\draw  (0,-\x) node {$\circ$} ; \draw  (0,\x) node {$\circ$} ;
\draw  (1,-\x) node {$\circ$} ; \draw  (1,\x) node {$\circ$} ;
\draw  (4,-\x) node {$\circ$} ; \draw  (4,\x) node {$\circ$} ;

\draw  (0,-\y) node {$\circ$} ; \draw  (0,\y) node {$\circ$} ;
\draw  (1,-\y) node {$\circ$} ; \draw  (1,\y) node {$\circ$} ;
\draw  (4,-\y) node {$\circ$} ; \draw  (4,\y) node {$\circ$} ;
\end{tikzpicture}
}
\caption{}
\end{subfigure}%
\begin{subfigure}{0.25\linewidth} 
\centering
 \resizebox{!}{3.cm}{
\begin{tikzpicture}
%\draw  (-1,0) node {} ;\draw  (1,0) node {} ;
\draw  (0,\y) node {} ;\draw  (0,-\y) node {} ;
\draw[<->,shorten >=0.1cm,shorten <=.1cm] 	(0,-\x)    	-- 	(0,\x);
\draw[<->,shorten >=0.1cm,shorten <=.1cm] 	(1,-\x)    	-- 	(1,\x);
\draw[<->,shorten >=0.1cm,shorten <=.1cm] 	(2,-\x)    	-- 	(2,\x);

\draw (0,0) node [left] {\small{$c_1$}}; \draw (1,0) node [left] {\small{$c_2$}}; \draw (2,0) node [left] {\small{$c_3$}};

\draw  (0,-\x) node {$\circ$} ; \draw  (0,\x) node {$\bullet$} ;
\draw  (1,-\x) node {$\circ$} ; \draw  (1,\x) node {$\bullet$} ;
\draw  (2,-\x) node {$\circ$} ; \draw  (2,\x) node {$\bullet$} ;
\draw  (.5,-\x) node {$=$} ; \draw  (1.5,-\x) node {$=$} ;
\draw (0,\x) node [right] {\small{$\mu_1$}};
\draw (1,\x) node [right] {\small{$\mu_2$}};
\draw (2,\x) node [right] {\small{$\mu_3$}};
\end{tikzpicture}
}
\caption{}
\end{subfigure}%
\begin{subfigure}{0.09\linewidth} 
\centering
 \resizebox{!}{3.cm}{
\begin{tikzpicture}
\draw (0,0) node [left] {$c$};
\draw (0,\x/2+\y/2) node [left] {\small{$\phi^X$}};
\draw (0,-\x/2-\y/2) node [left] {\small{$\phi^Y$}}; 
\draw (0,\y) node [right] {\small{$\mu$}};
\draw (0,-\y) node [right] {\small{$\nu$}}; 
\draw[<->,shorten >=0.1cm,shorten <=.1cm] 	(0,-\x)    	-- 	(0,\x);
\draw[dotted,shorten >=0.1cm,shorten <=.1cm] 	(0,\x)    	-- 	(0,\y);
\draw[dotted,shorten >=0.1cm,shorten <=.1cm] 	(0,-\x)    	-- 	(0,-\y);
\draw  (0,-\x) node {$\circ$} ; \draw  (0,\x) node {$\circ$} ;
\draw  (0,-\y) node {$\bullet$} ; \draw  (0,\y) node {$\bullet$} ;
\draw  (-.5,0) node {} ;
\draw  (.5,0) node {} ;
%\draw  (0,-2.5) node {} ;
\end{tikzpicture}
}
\caption{}
\end{subfigure}%
\begin{subfigure}{0.25\linewidth} 
\centering
 \resizebox{!}{3.cm}{
\begin{tikzpicture}
\draw (0,0) node [left] {$c$};
\draw (0,\x/2+\y/2) node [left] {\small{$\phi_{\KL}$}};
\draw (0,-\x/2-\y/2) node [left] {\small{$\phi_{\KL}$}}; 
\draw (0,\y) node [left] {\small{$\rho^1_k$}};
\draw (0,-\y) node [left] {\small{$\rho^1$}}; 
\draw[<->,shorten >=0.1cm,shorten <=.1cm] 	(0,-\x)    	-- 	(0,\x);
\draw[dotted,shorten >=0.1cm,shorten <=.1cm] 	(0,\x)    	-- 	(0,\y);
\draw[dotted,shorten >=0.1cm,shorten <=.1cm] 	(0,-\x)    	-- 	(0,-\y);
\draw  (0,-\x) node {$\circ$} ; \draw  (0,\x) node {$\circ$} ;
\draw  (0,-\y) node {$\circ$} ; \draw  (0,\y) node {$\bullet$} ;

\draw (1,0) node [right] {$c$};
\draw (1,\x/2+\y/2) node [right] {\small{$\phi_{\KL}$}};
\draw (1,-\x/2-\y/2) node [right] {\small{$\phi_{\KL}$}}; 
\draw (1,\y) node [right] {\small{$\rho^2_k$}};
\draw (1,-\y) node [right] {\small{$\rho^2$}}; 
\draw[<->,shorten >=0.1cm,shorten <=.1cm] 	(1,-\x)    	-- 	(1,\x);
\draw[dotted,shorten >=0.1cm,shorten <=.1cm] 	(1,\x)    	-- 	(1,\y);
\draw[dotted,shorten >=0.1cm,shorten <=.1cm] 	(1,-\x)    	-- 	(1,-\y);
\draw  (1,-\x) node {$\circ$} ; \draw  (1,\x) node {$\circ$} ;
\draw  (1,-\y) node {$\circ$} ; \draw  (1,\y) node {$\bullet$} ;

\draw[color=black!80] (-.5,-\y+.25) rectangle (1.5,-\y-.25);
\draw[right]  (1.5,-\y) node {\small{$\mathcal{G}$}} ;
\draw  (-1,0) node {} ;
\draw  (2,0) node {} ;
\end{tikzpicture}
}
\caption{}
\end{subfigure}%

%%%% CODE FOR GradientFlows %%%%%%%%%%%%
\iffalse
\begin{tikzpicture}
%\draw  (-1,0) node {} ;\draw  (1,0) node {} ;
\draw  (0,\y) node {} ;\draw  (0,-\y) node {} ;
\draw[<->,shorten >=0.1cm,shorten <=.1cm] 	(0,-\x)    	-- 	(0,\x);
\draw[<->,shorten >=0.1cm,shorten <=.1cm] 	(1,-\x)    	-- 	(1,\x);
\draw[<->,shorten >=0.1cm,shorten <=.1cm] 	(2,-\x)    	-- 	(2,\x);
%\draw[<->,shorten >=0.1cm,shorten <=.1cm] 	(3,-\x)    	-- 	(3,\x);

\draw (0,0) node [left] {\small{$c$}}; 
\draw (1,0) node [left] {\small{$\epsilon c$}}; 
\draw (2,0) node [left] {\small{$\epsilon^2c$}};

\draw  (0,-\x) node {$\circ$} ; \draw  (0,\x) node {$\bullet$} ;
\draw  (1,-\x) node {$\circ$} ; \draw  (1,\x) node {$\circ$} ;
\draw  (2,-\x) node {$\circ$} ; \draw  (2,\x) node {$\circ$} ;
\draw  (3,-\x) node {$\circ$} ;
\draw  (.5,-\x) node {$=$} ; \draw  (2.5,-\x) node {$=$} ;
\draw  (1.5,\x) node {$=$} ;  
\draw  (3.,0) node {$\vdots$} ; 
\draw (0,\x) node [above] {\small{$\rho_0$}};
\draw (.5,-\x-.25) node [below] {\scriptsize{$(1+\epsilon)R(\rho_1)$}};
\draw (1.5,\x+.25) node [above] {\scriptsize{$\epsilon(1+\epsilon) R(\rho_2)$}};
\draw (2.5,-\x-.25) node [below] {\scriptsize{$\epsilon^2(1+\epsilon)R(\rho_3)$}};

\draw[color=black!80] (-.2,-\x+.25) rectangle (1.2,-\x-.25);
\draw[color=black!80] (.8,\x-.25) rectangle (2.2,\x+.25);
\draw[color=black!80] (1.8,-\x+.25) rectangle (3.2,-\x-.25);

\end{tikzpicture}
\fi 
\caption{Representation of some problems of the form \eqref{eq-general}. Each circle represents a measure (filled with black if is fixed by an equality constraint); the terms in the functional $\mathcal{J}$, see \eqref{eq-general}, are represented by arrows (for transport terms), dotted lines (for divergence terms) and rectangles (for the $\mathcal{G}$ terms). (A) With $\mathcal{F}_1$ and $\mathcal{F}_2$ of the form \eqref{eq_marginal_functional} (B) Balanced barycenter (C) Unbalanced optimal transport (D) $\WF$ gradient flow of 2 species with interaction.}
\label{fig_generic_problem}
\end{figure}
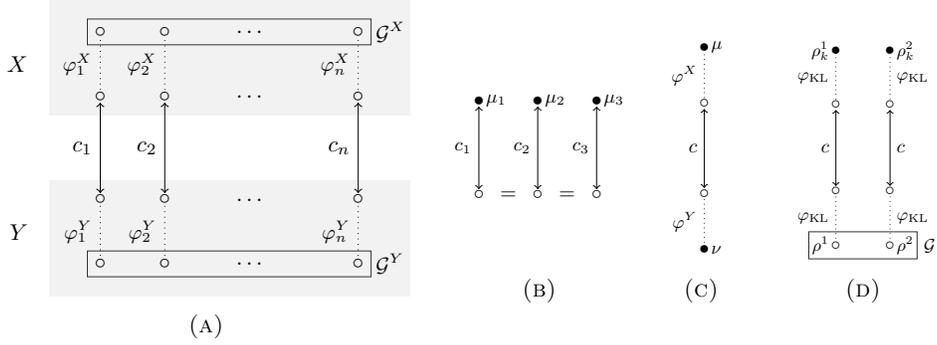

% !TEX root = ../EntropicNumeric.tex 

\section{Entropic Regularization and Iterative Scaling Algorithm}
\label{sec_algorithm_analysis}
In this section, we introduce and describe an iterative scaling algorithm which solves a regularized version of the generic variational problem \eqref{eq-general}. This analysis is carried out in a continuous setting.
%
%%%%%%%%%%%%%%%%%%%%%%%%%%%%%%%%%%%%%%%%%%%%%%%%
\subsection{Entropic Regularization}
\label{subsec:EntropicRegularization}

Following several recent articles, discussed in Section~\ref{sec: previous works}, we consider an approximation of problem \eqref{eq-general} where an entropy term is used in place of the nonnegativity constraint on $\gamma$.
Let $\d x$ and $\d y$ be reference probability measures on $X$ and $Y$ and assume that the product measure $\d x \d y$ is the reference measure on $X\times Y$ (one could consider more general reference measures on $X\times Y$ but this would require to invoke the concept of disintegration of measures).
We define (minus) the entropy of a family of couplings $(\gamma_k)_{k=1}^n$ as 
\eq{
\mathcal{H}(\gamma) \eqdef \sum_{k=1}^n \int_{X\times Y} r_k(\log(r_k)-1)\d x \d y\, .
}
if each $\gamma_k$ admits a nonnegative density $r_k$ w.r.t. $\d x \d y$ (with the convention $0\log 0 = 0$) and $\infty$ otherwise.
Note that each term is, up to a constant, the Kullback-Leibler divergence $\KLm$ of $\gamma_i$ with respect to $\d x \d y$, as defined in Example \ref{ex_KLdiv}.
Given a small regularization parameter $\epsilon>0$, we consider 
\eql{\label{eq-general-regul}
	\umin{\gamma \in \Mm^n(X \times Y)}
		 \mathcal{J}(\gamma)
		 + \epsilon \, \mathcal{H}(\gamma).
}
Recalling that $ \mathcal{J}(\gamma)= \langle c , \gamma \rangle 
+ \mathcal{F}_1(P^X_\# \gamma) 
+ \mathcal{F}_2(P^Y_\# \gamma)\,$, this can be rewritten, up to a constant, as
\eql{\label{eq-general-regul_bis}
	\umin{\gamma \in \Mm^n(X \times Y)}
		 \mathcal{F}_1(P^X_\# \gamma) + \mathcal{F}_2(P^Y_\# \gamma)
		 + \epsilon \sum_{k=1}^n \KLm(\gamma_k|e^{-c_k/\epsilon}\d x \d y).
}
Adding the entropy term can be interpreted in the following ways, detailed for $n=1$ for simplicity.

\paragraph{Interpretation 1.}
The resolution of~\eqref{eq-general-regul} can be understood as a regularization scheme, since the term $\epsilon\, \mathcal{H}(\gamma)$ makes the problem strictly convex.
In the limit $\epsilon \rightarrow 0$, the unique solution to~\eqref{eq-general-regul} should converge to a solution of the original problem. This is shown in the case of balanced transport~\eqref{eq-ot} in~\cite{2015-carlier-convergence,leonard2012schrodinger}. 
In our setting, a general result is beyond the scope of this article. Nevertheless, the finite dimensional case is insightful: if $\d x \d y$ has full support, then one recovers the minimizer of $\mathcal{J}$ of minimal entropy.
\begin{proposition}
\label{prop_converg_regul}
Consider the case where $n=1$ (one coupling) and $X$ and $Y$ are finite spaces. Assume that $\mathcal{J}$ is convex, lower semicontinuous and that $A \eqdef \argmin_{\gamma \ll \d x \d y} \mathcal{J}(\gamma)$ is nonempty. Let $(\epsilon_k)_{k\in \NN}$ be a sequence of strictly positive real numbers converging to $0$. Then the sequence $(\gamma_k)_{k\in \NN}$ of minimizers of \eqref{eq-general-regul} is well defined and converges to $\argmin_{\gamma \in A} \mathcal{H}(\gamma)$.
\end{proposition}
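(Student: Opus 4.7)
My plan is to follow the classical Tikhonov regularization / $\Gamma$-convergence template, using the strict convexity and superlinearity of $\mathcal{H}$ together with the lower semicontinuity of $\mathcal{J}$. I would first settle the well-posedness of both the regularized problems and of the limiting ``minimum-entropy'' problem, then extract a priori bounds from the optimality inequality, and finally pass to the limit on a subsequence whose unique cluster point forces convergence of the whole sequence.

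For well-posedness, observe that in the finite-dimensional setting, viewing $\gamma$ as a vector of densities in $\RR_+^{|X|\cdot|Y|}$, the entropy $\mathcal{H}$ is continuous, strictly convex, bounded below, and coercive (superlinear in each entry). Consequently $\mathcal{J}+\epsilon_k\mathcal{H}$ is lsc, strictly convex, and coercive, and a standard Weierstrass argument gives a unique minimizer $\gamma_k$. The set $A$ is convex, nonempty by assumption, and closed by lsc of $\mathcal{J}$; since $\mathcal{H}$ is strictly convex, lsc and coercive, its restriction to $A$ admits a unique minimizer, which I will call $\gamma^\star$. Note that since $\d x\, \d y$ has full support on $X\times Y$, every $\gamma \in A$ is automatically absolutely continuous and has finite entropy, so the comparison below is meaningful.

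Next comes the core step. Testing $\gamma_k$ against $\gamma^\star$ in the optimality of $\gamma_k$ gives
\begin{equation*}
\mathcal{J}(\gamma_k) + \epsilon_k \mathcal{H}(\gamma_k) \;\leq\; \mathcal{J}(\gamma^\star) + \epsilon_k \mathcal{H}(\gamma^\star).
\end{equation*}
Since $\gamma^\star \in A$ minimizes $\mathcal{J}$, one has $\mathcal{J}(\gamma_k)\geq \mathcal{J}(\gamma^\star)$; rearranging and dividing by $\epsilon_k>0$ simultaneously yields the entropy bound $\mathcal{H}(\gamma_k)\leq \mathcal{H}(\gamma^\star)$ and the cost estimate
\begin{equation*}
0 \;\leq\; \mathcal{J}(\gamma_k) - \mathcal{J}(\gamma^\star) \;\leq\; \epsilon_k \bigl(\mathcal{H}(\gamma^\star)-\mathcal{H}(\gamma_k)\bigr).
\end{equation*}
Combined with the coercivity of $\mathcal{H}$, the entropy bound confines $(\gamma_k)_k$ to a compact subset of $\Mm_+(X\times Y)$; the cost estimate shows $\mathcal{J}(\gamma_k)\to \mathcal{J}(\gamma^\star)$ as $k\to\infty$.

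Finally, let $\bar\gamma$ be any cluster point of $(\gamma_k)_k$. Lower semicontinuity of $\mathcal{J}$ yields $\mathcal{J}(\bar\gamma)\leq \liminf_k \mathcal{J}(\gamma_k) = \mathcal{J}(\gamma^\star)$, hence $\bar\gamma \in A$. Lower semicontinuity of $\mathcal{H}$ and the uniform bound $\mathcal{H}(\gamma_k)\leq \mathcal{H}(\gamma^\star)$ give $\mathcal{H}(\bar\gamma)\leq \mathcal{H}(\gamma^\star) = \min_A \mathcal{H}$, so $\bar\gamma$ also minimizes $\mathcal{H}$ on $A$. By the uniqueness established earlier, $\bar\gamma = \gamma^\star$, and since every subsequential limit agrees, the whole sequence converges to $\gamma^\star$. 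The only mildly delicate point is the compactness of $(\gamma_k)_k$, and that comes for free from the superlinearity of the entropy in this finite-dimensional setting; in an infinite-dimensional extension this is where a genuine argument would be required.
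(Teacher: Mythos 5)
Your proof is correct and follows essentially the same route as the paper: unique minimizers from strict convexity and coercivity, the entropy bound $\mathcal{H}(\gamma_k)\leq\mathcal{H}(\gamma^\star)$ extracted from the optimality inequality, compactness, identification of any cluster point as the minimal-entropy element of $A$, and uniqueness to upgrade to full convergence. The only cosmetic difference is that you test against the specific minimal-entropy point of $A$ (whose existence you establish first), whereas the paper tests against an arbitrary $\bar\gamma\in A$ and deduces minimality of the cluster point's entropy afterwards; the two are interchangeable here.
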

\begin{proof}
Since the objective function in \eqref{eq-general-regul} is coercive, strictly convex, lower semicontinuous and its domain is nonempty, there exists a unique minimizer $\gamma_k$ of \eqref{eq-general-regul} associated to $\epsilon_k$ for all $k \in \NN$. Now, let $\bar{\gamma}\in A$. For all $k\in \NN$, the optimality of $\gamma_k$ implies
\[
\gamma_k \in \enscond{ \gamma \in \Mm_+(X \times Y)}{\mathcal{H}(\gamma) \leq \mathcal{H}(\bar{\gamma}) }
\]
where $\mathcal{H}(\bar{\gamma})$ is finite since it is a finite sum of real numbers. As $\mathcal{H}$ is coercive and lower semicontinuous, this set is compact. This implies the existence of cluster points for $(\gamma_k)$: let $\gamma^*$ be one of them. 
One has that $\gamma^*$ belongs to $A$ since  for all $k$, $|\mathcal{J}(\gamma_k)-\mathcal{J}(\bar{\gamma})|\leq 2\epsilon_k \mathcal{H}(\bar{\gamma}) \to 0$ and $\mathcal{J}(\gamma^*) \leq \liminf_{k \to \infty} \mathcal{J}(\gamma_k)$. 
Moreover, as $\mathcal{H}(\gamma^*) \leq \mathcal{H}(\bar{\gamma})$ and $\bar{\gamma}$ is arbitrarily chosen in $A$, it holds $\gamma^* \in \argmin_{\gamma \in A} \mathcal{H}(\gamma)$. 
By strict convexity, this cluster point is unique and $\gamma_k \to \gamma^*$.
\end{proof}

\paragraph{Interpretation 2.}
A second way to interpret~\eqref{eq-general-regul} is as a proximal step for the Kullback-Leibler divergence. Indeed, by denoting $\gamma^{(0)}\eqdef\d x \d y$ and $\gamma^{(1)}$ the solution to~\eqref{eq-general-regul}, one has
$\gamma^{(1)} = \prox^{\KL}_{\mathcal{J}/\epsilon}(\gamma^{(0)})$. It is possible to iterate this relation and define%\todo{G: use rather the iteration notation $^{(\ell)}$ than $_\ell$}
\eq{
	\gamma^{(\ell+1)} \eqdef  \prox^{\KL}_{\mathcal{J}/\epsilon}(\gamma^{(\ell)}).
}
which is the so-called proximal-point algorithm for the $\KL$ divergence, known to converge to the solution of~\eqref{eq-general} under the same conditions as in Proposition \ref{prop_converg_regul} (see \cite{EcksteinProxPoint}).
It is a simple exercise to show, that for balanced optimal transport performing $\ell$ proximal steps with stepsize $\epsilon$ corresponds to a single step with stepsize $\epsilon/\ell$.
While this precise relation is not true for the more general transport problems discussed here, we still find in practice that with small enough $\epsilon$ a single iteration yields sufficient accuracy (see Section \ref{sec:LogDomain} for numerical handling of small $\epsilon$ values).
\paragraph{Interpretation 3.}
Beyond computational aspects, for the specific case of standard OT with quadratic cost, the entropic regularization admits several interpretations as a stochastic version of OT. For instance, it is obtained by considering an optimal matching problem where there is (a specific model of) unknown heterogeneity in the preferences within each class to be matched \cite{Galichon-Entropic}. What is more, it amounts to computing the law of motion of the (indistinguishable) particules of a gaz which follow a Brownian motion, conditionally to the observation of its density at times $t=0$ and $t=1$, the so-called Schr\"odinger bridge problem \cite{LeonardSchroedinger}.
\paragraph{Choice of the reference measure.}
The choice of $\d x$ and $\d y$ is critical for obtaining the weak convergence to a solution of \eqref{eq-general} when $\epsilon \to 0$. An obvious necessary condition is that the support of $\d x \d y$ should contain the support of an optimizer of the unregularized problem \eqref{eq-general}.
For instance, when solving an unbalanced optimal transport problem between $\mu$ and $\nu$, the choice $\d x = \mu/\mu(X)$ and $\d y = \nu/\nu(Y)$ is not suitable if one of the divergences is not superlinear because  $\Divergm_\phi (\sigma | \mu)<\infty$ does not imply $\sigma\ll\mu$. In this case, the support of the reference measure $\d x \d y$ should contain the sets (well defined under the hypotheses of Proposition \ref{prop_UOT_existence})
\eq{
	\{ (x,\argmin_y c(x,y)) ; x\in X ), \qandq \{(\argmin_x c(x,y),y ) ; y\in Y \}\, .
}
%For numerical implementation, in the general case, we thus recommend to use reference measures $\d x$ and $\d y$ which are sum of uniformly distributed Dirac masses and not the marginals $\mu$ or $\nu$ as is customary for standard optimal transport.

Also, for the barycenter \eqref{eq_barycenter} or the gradient flow problems~\eqref{eq-grad-flow}, one does not have access in general to a reference measure according to which an optimizer is absolutely continuous. For numerics, the choice of the reference measures then corresponds to the choice of a discretization grid on which we find approximate solutions to the original problem.

%%%%%%%%%%%%%%%%%%%%%%%%%%%%%%%%%%%%%%%%%%%%%%%%%

%%%%%%%%%%%%%%%%%%%%%%%%%%%%%%%%%%%%%%%%%%%%%%%%%%%%
\subsection{Reformulation using Densities and Duality}
\label{sec_density_setting}
From the definition of the entropy $\mathcal{H}$, any feasible $\gamma$ of the generic problem \eqref{eq-general-regul} admits an $\Lun$ density with respect to the reference measure $\d x \d y$. Accordingly, for the convenience of the analysis, we reformulate \eqref{eq-general-regul_bis} as a variational problem on measurable functions:
\eql{\label{eq-general-regul-KL}
\tag{$P_{\epsilon}$}
	\umin{r \in \Lun(X\times Y)^n} 
		 F_1( P^X_\# r )
		 + F_2( P^Y_\# r  )
		 + \epsilon \KL(r |K).
}
where $F_1(s) \eqdef \mathcal{F}_1(s\d x)$, $F_2(s) \eqdef \mathcal{F}_2(s\d y)$, $K \in \Linf_+(X\times Y)^n$ is defined componentwise by
\begin{equation}
	\label{eq_GibbsKernel}
	K_k(x,y) \eqdef e^{-\frac{c_k(x,y)}{\epsilon}}
\end{equation}
for $k\in \{1,\dots,n\}$, and with the convention $\exp (-\infty) = 0$, the projection operator acts on each component of $r$ and $\KL$ is the sum of the Kullback-Leibler divergences on each component (see the notations in Section \ref{sec_notation}).
In this Section, we only make the following general assumptions on the objects involved in \eqref{eq-general-regul-KL}:
\begin{assumptions}\hfill
% in case you really want this label here, assumptions environment must be numbered in mystyle.sty
%\label{assumtions_main}
\begin{enumerate}[(i)]
\item $(X,\d x)$ and $(Y,\d y)$ are probability spaces (i.e.\ measured spaces with unit total mass). The product space $X\times Y$ is equipped with the product measure $\d x \d y \eqdef \d x \otimes \d y$;
	\label{asp_reference_measure}
\item $F_1 : \Lun(X)^n \to \RR \cup \{\infty\}$ and $F_2 : \Lun(Y)^n \to \RR \cup \{\infty\}$ are weakly lower semicontinuous, convex and proper functionals;
\item $K\in \Linf_+(X\times Y)^n$ and $\epsilon>0$.
\end{enumerate}
\end{assumptions}
We begin with a general duality result, which is an application of Fenchel-Rockafellar Theorem (see Appendix \ref{sec:ApxConvex}). It is similar to duality results that can be found in the literature on entropy minimization \cite{borwein1991duality} but the functional we consider is more general.
\begin{theorem}[Duality]
\label{prop_duality}
	The dual problem of \eqref{eq-general-regul-KL} is
		\eql{\label{eq-dual-pbm}
		\tag{$D_{\epsilon}$}
		\usup{ (u,v) \in \Linf(X)^n\times \Linf(Y)^n } - F_1^*(-u)-F_2^*(-v) - 
		\epsilon \dotp{ e^{(u \oplus v)/\epsilon}-1}{ K  } 
	}
where $u \oplus v : (x,y) \mapsto u(x)+v(y)$. Strong duality holds, i.e.\
$\min \eqref{eq-general-regul-KL} = \sup \eqref{eq-dual-pbm}$ and the minimum of \eqref{eq-general-regul-KL} is attained for a unique $r = (r_k)_{k=1,\dots,n} \in \Lun(X\times Y)^n$. Moreover, $u$ and $v$ maximize \eqref{eq-dual-pbm} if and only if
	\eql{\label{eq-primal-dual}
	\begin{cases}
		-u \in \partial F_1 (P^X_\# r) &\\
		  -v \in \partial F_2 (P^Y_\# r) &
	\end{cases}
		 \qandq
		r_k(x,y) = e^{\frac{u_k(x)}\epsilon} K_k(x,y) e^{\frac{v_k(x)}\epsilon}    \text{ for } k =1,\dots,n.
	}%
\end{theorem}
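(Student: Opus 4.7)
The strategy is to apply Fenchel-Rockafellar duality in the $(L^1,L^\infty)$ Banach pairing. Introduce the bounded linear operator $A\colon L^1(X\times Y)^n \to L^1(X)^n \times L^1(Y)^n$, $Ar \eqdef (P^X_\# r, P^Y_\# r)$, whose adjoint is $A^*(u,v)=u\oplus v$ (componentwise) by Fubini. Set $f(s_1,s_2)\eqdef F_1(s_1)+F_2(s_2)$ and $g(r)\eqdef \epsilon\,\KL(r|K)$; both are convex, proper and weakly lower semicontinuous under the standing assumptions. Then \eqref{eq-general-regul-KL} is $\inf_r f(Ar)+g(r)$, whose abstract dual is $\sup_{(u,v)} -f^*(-u,-v)-g^*(A^*(u,v))$. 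Separability yields $f^*(\alpha,\beta)=F_1^*(\alpha)+F_2^*(\beta)$; for $g^*$ I would use Rockafellar's interchange theorem for integral functionals and pointwise maximize $w_k r_k - \epsilon\,\phi_{\KL}(r_k/K_k)\,K_k$ over $r_k\geq 0$, whose optimum is attained at $r_k = K_k e^{w_k/\epsilon}$ with value $g^*(w)=\epsilon\langle e^{w/\epsilon}-1,K\rangle$. Substituting $w=u\oplus v$ then reproduces \eqref{eq-dual-pbm} verbatim.

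Primal existence and uniqueness follow directly from properties of the entropy term. Any minimizing sequence has bounded $\KL(\cdot|K)$, hence is uniformly integrable by de la Vall\'ee-Poussin, hence weakly sequentially relatively compact in $L^1$ by the Dunford-Pettis theorem. Weak continuity of the partial marginals together with weak lower semicontinuity of $F_1$, $F_2$ and $\KL$ identifies every weak cluster point as a minimizer, and strict convexity of $\KL(\cdot|K)$ on its domain makes it unique.

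The primal-dual characterization then follows from three applications of the Fenchel-Young inequality: for any admissible $r$ and $(u,v)$,
\[
F_1(P^X_\# r)+F_1^*(-u) \geq -\langle u,P^X_\# r\rangle,\qquad F_2(P^Y_\# r)+F_2^*(-v) \geq -\langle v,P^Y_\# r\rangle,
\]
\[
\epsilon\,\KL(r|K)+g^*(u\oplus v) \geq \langle u\oplus v,r\rangle = \langle u,P^X_\# r\rangle + \langle v,P^Y_\# r\rangle,
\]
and summing makes the coupling terms cancel, yielding weak duality. Equality in each inequality is equivalent, via the standard Fenchel-Young extremality characterization, to $-u\in\partial F_1(P^X_\# r)$, $-v\in\partial F_2(P^Y_\# r)$ and (using smoothness of $\KL$) $r_k(x,y) = e^{u_k(x)/\epsilon} K_k(x,y) e^{v_k(y)/\epsilon}$ respectively, which is the system \eqref{eq-primal-dual}. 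Once strong duality has been obtained, simultaneous equality in all three Fenchel-Young inequalities is precisely the statement that $(u,v)$ maximizes \eqref{eq-dual-pbm} together with the primal minimizer forming a saddle point.

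The main obstacle will be closing the duality gap, i.e.\ delivering $\inf \eqref{eq-general-regul-KL} = \sup \eqref{eq-dual-pbm}$ from the abstract duality theorem. The standard continuity-based qualification fails here: $\dom g$ has empty $L^1$ norm-interior inside the positive cone and $f$ need not be continuous on $L^1(X)^n\times L^1(Y)^n$. I would therefore invoke an Attouch-Br\'ezis / Moreau-Rockafellar version of Fenchel-Rockafellar whose qualification condition reduces to $0$ being in the strong relative interior of $\dom f - A\,\dom g$; this in turn is implied by primal feasibility of \eqref{eq-general-regul-KL}, which is already needed for the statement to be non-vacuous. Alternatively, the exponential smoothness of $g$ allows one to exhibit a sequence of nearly optimal dual variables built from the primal minimizer, and conclude by direct limit arguments rather than through the abstract theorem.
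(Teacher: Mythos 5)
Your overall architecture is sound, and your direct existence argument (de la Vall\'ee-Poussin plus Dunford--Pettis) and the Fenchel--Young derivation of the optimality system are both workable. But the step you yourself flag as ``the main obstacle'' is a genuine gap, and your proposed repairs do not close it. In the $(\Lun,\Linf)$ orientation you choose --- primal $=\inf_r f(Ar)+g(r)$ over $\Lun$ --- the classical qualification indeed fails, and the Attouch--Br\'ezis-type condition you invoke is \emph{not} implied by mere primal feasibility: that condition requires (in its usual form) that the cone generated by $\dom f - A\,\dom g$ be a closed subspace, or that $0$ lie in a suitable quasi-relative interior, neither of which follows from the existence of one feasible point. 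Your fallback (``exhibit nearly optimal dual variables by a direct limit argument'') is only a declaration of intent. As written, strong duality $\min \eqref{eq-general-regul-KL}=\sup\eqref{eq-dual-pbm}$ is not established.

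The paper sidesteps this entirely by running Fenchel--Rockafellar in the opposite direction: it takes the \emph{dual} problem, posed on $\Linf(X)^n\times\Linf(Y)^n$ with the operator $A(u,v)=u\oplus v$ mapping into $\Linf(X\times Y)^n$, as the ``primal'' of the abstract theorem. The relevant perturbation function is then $G^*(w)=\langle e^{w}-1,K\rangle$, which is finite and \emph{norm-continuous everywhere} on $\Linf(X\times Y)^n$ (by Rockafellar's theorem on integral functionals, using that $\d x\,\d y$ is finite and $K$ is bounded). The classical interiority qualification is therefore satisfied trivially, and the theorem delivers in one stroke the absence of a duality gap \emph{and} attainment of the minimum on the $(\Linf)^*$ side, which is then identified with the $\Lun$ problem since $G=+\infty$ off $\Lun$. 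If you want to salvage your orientation, the cleanest fix is exactly this role reversal; otherwise you must actually verify an Attouch--Br\'ezis condition or carry out the approximation argument in detail, neither of which is routine here.
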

\begin{proof}
In this proof, the spaces $\Linf$ and $(\Linf)^*$ are endowed with the strong and the weak* topology respectively.
As $\Lun(X\times Y)^n$ can be identified with a subset of the topological dual of $\Linf (X\times Y)^n$, the function $\KL(\cdot|K)$ can be extended on $(\Linf (X\times Y)^{n})^*$ as $G(r)=\KL(r|K)$ if $r\in \Lun(X\times Y)^n$ and $+\infty$ otherwise.
Its convex conjugate $G^*: \Linf(X\times Y)^n \to \RR$ is
\eq{
G^*(w)=\sum_{k=1}^n \int_{X\times Y}(e^{w_k(x,y)}-1)K_k(x,y) \d x \d y = \langle e^w-1,K \rangle \, .
}
and is everywhere continuous for the strong topology \cite[Theorem 4]{rockafellar1968integrals} (this property relies on the finiteness of $\d x \d y$ and the boundedness of $K$).
The linear operator $A: \Linf(X)^n \times \Linf(Y)^n \to \Linf(X \times Y)^n$ defined by $A(u,v) : (x,y)\mapsto u(x)+v(y)$ is continuous and its adjoint is defined on $(\Linf(X\times Y)^n)^*$ (identified with a subset of $\Mm(X\times Y)^n$) by $A^*(r) = (P^X_\# r , P^Y_\# r)$. 
Since $F_1$ and $F_2$ are convex, lower semicontinuous, proper and since $G^*$ is everywhere continuous on $\Linf(X\times Y)^n$, strong duality and the existence of a minimizer for \eqref{eq-general-regul-KL} is given by Fenchel-Rockafellar theorem (see Appendix \ref{sec:ApxConvex}). More explicitely, this theorem states that
\eq{
\sup_{(u,v) \in \Linf(X)^n\times \Linf(Y)^n }  -F_1^*(-u)-F_2^*(-v) - \epsilon G^* (A(u,v)/\epsilon) 
}
and 
\eq{
\min_{r \in (\Linf(X\times Y)^n)^* }  F_1(P^X_\# r)+F_2(P^Y_\# r) + \epsilon G (r) 
}
are equal, the latter being exactly \eqref{eq-general-regul-KL} since $G$ is infinite outside of $\Lun(X\times Y)^n$. It states also that if $(u,v)$ maximizes \eqref{eq-dual-pbm}, then any minimizer of \eqref{eq-general-regul} satisfies
$r \in \partial G^*(A(u,v)/\epsilon)$ and the expression for the subdifferential of $G^*$ is an application of the result in Appendix \ref{sec:ApxDivergences}. Finally, uniqueness of the minimizer for \eqref{eq-general-regul-KL} comes from the strict convexity of $G$. 
\end{proof}

%%%%%%%%%%%%%%%%%%%%%%%%%%%%%%%%%%%%%%%%%%%%%%%%%
\subsection{Scaling Algorithm}
\label{sec:EntropyScaling}
The specific splitting of the problem~\eqref{eq-general-regul-KL} makes it suitable for the well-known Dykstra's algorithm (see Section~\ref{sec: previous works} for more background on this algorithm). Since the functions $F_1$ and $F_2$ operate on the marginals only, Dykstra's iterations take a very simple form which is related to the celebrated Sinkhorn algorithm. They are obtained as an alternating maximization on~\eqref{eq-dual-pbm}.

%
%One first need to decouple the variables $u$ and $v$ as they appear in \eqref{eq-dual-pbm}. 
Let $\Gibbs$ and $\transp{\Gibbs}$ be the linear operators defined, for $a:X\to [0,\infty[^n$ and $b:Y\to [0,\infty[^n$ measurable, for $k=1,\dots,n$ as
\begin{align}
	\label{eq-gibbs-convolution}
	(\Gibbs b)(x)_k \eqdef \int_Y K_k(x,y)  b_k(y) \d y \qandq
	(\transp{\Gibbs} a)(y)_k \eqdef \int_X K_k(x,y) a_k(x) \d x\, .
\end{align}
Given $\itz{v} \in \Linf(Y)^n$, alternating maximizations on the dual problem~\eqref{eq-dual-pbm} define, for $\ell=0,1,2,\ldots$, the iterates
\eql{
\label{eq_alternating_max}
\left\{\begin{aligned}
\iiter{u}  &= \argmax_{u\in \Linf(X)^n} - F_1^*(-u) - \epsilon \langle e^{\frac{u}\epsilon}, \Gibbs e^{\frac{\iter{v}}\epsilon} \rangle_X \,, \\
\iiter{v}  &= \argmax_{v\in \Linf(Y)^n} - F_2^*(-v) - \epsilon  \langle e^{\frac{v}\epsilon}, \transp{\Gibbs} e^{ \frac{\iiter{u}}\epsilon} \rangle_Y \,,
\end{aligned}
\right.
}
where we used the fact that, by Fubini-Tonelli, one has
\eql{\label{eq_fubinigibbs}
	\langle e^{(u\oplus v)/\epsilon}, K \rangle_{X \times Y}
	= \langle e^{\frac{u}\epsilon}, \Gibbs e^{\frac{v}\epsilon}\rangle_X
	= \langle e^{\frac{v}\epsilon}, \transp{\Gibbs} e^{ \frac{u}\epsilon} \rangle_{Y} \, .
}
Conditions which ensure the existence of these iterates are given later in Theorem \ref{thm-sinkhorn-div}; for now, remark that by strict convexity, they are uniquely defined when they exist.
Given an initialization $b^{(0)}\in \Linf_+(Y)$, the main iterations of this paper are defined as follows, for $\ell =0,1,2,\ldots$:

\bigskip
\fbox{
\begin{minipage}[c]{.9\textwidth} 
\textbf{Scaling iterations:}\hfill
	\begin{align}\label{eq-scaling-iterates}\tag{S}
		\iiter{a} \eqdef \frac{ 
				\prox^{\KL}_{F_1/\epsilon}(\Gibbs \iter{b})  
			}{
				\Gibbs \iter{b}
			}
			\, , \quad	
		\iiter{b} \eqdef \frac{ 
				\prox^{\KL}_{F_2/\epsilon}(\transp{\Gibbs} \iiter{a})  
			}{
				\transp{\Gibbs} \iiter{a}
			}
	\end{align}
\end{minipage}
}
\bigskip

where the proximal operator for the $\KL$ divergence is defined for $F_1$ (and similarly for $F_2$) as
\eq{
	\prox^{\KL}_{F_1/\epsilon}(z) \eqdef \argmin_{\substack{s :X \to \RR^n \\ \text{measurable}}} F_1(s) + \epsilon \KL(s|z) \, .
}
The following Proposition shows that, as long as they are well defined, these iterates are related to the alternate dual maximization iterates.
\begin{proposition}
\label{prop_alternating}
Define $a^{(0)}=\exp (v^{(0)})$, let $(\iter{a},\iter{b})$ be the scaling iterates \eqref{eq-scaling-iterates} and $(\iter{u},\iter{v})$ the alternate dual maximization iterates defined in~\eqref{eq_alternating_max}.
If for all $\ell \in \NN$, either $\log \iter{a} \in \Linf (X)^n$ and $\log \iter{b} \in \Linf(Y)^n$, or $\iter{u}\in \Linf(X)^n$ and $\iter{v}\in \Linf(Y)^n$ then 
	\eq{
		(\iter{a},\iter{b})=(e^{\iter{u}/\epsilon},e^{\iter{v}/\epsilon}).
	}
\end{proposition}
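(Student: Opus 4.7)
I will argue by induction on $\ell$. The base case is simply the consistent matching of initializations ($b^{(0)} = e^{v^{(0)}/\epsilon}$, with an analogous convention on $a^{(0)}$ underlying the statement). For the induction step, I treat only the $u$-update---the $v$-update is symmetric---showing that if $\iter{b}=e^{\iter{v}/\epsilon}$ then $\iiter{a}=e^{\iiter{u}/\epsilon}$.

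\textbf{Matching the first-order conditions.} Let $z \eqdef \Gibbs \iter{b}\in \Linf_+(X)^n$. Since $\iiter{a}\cdot z = \prox^{\KL}_{F_1/\epsilon}(z)$ by definition \eqref{eq-scaling-iterates}, and since $\partial_s \KL(s|z) = \log(s/z)$ pointwise, the optimality of this KL proximal step reads
\begin{equation*}
    -\epsilon \log \iiter{a} \in \partial F_1\bigl(\iiter{a}\cdot z\bigr).
\end{equation*}
On the dual side, the first-order condition for the maximization \eqref{eq_alternating_max} is $0 \in -\partial F_1^*(-\iiter{u}) + e^{\iiter{u}/\epsilon}\cdot z$, or equivalently $e^{\iiter{u}/\epsilon}\cdot z \in \partial F_1^*(-\iiter{u})$, which by Fenchel--Young translates into
\begin{equation*}
    -\iiter{u} \in \partial F_1\bigl(e^{\iiter{u}/\epsilon}\cdot z\bigr).
\end{equation*}
Setting $\tilde u \eqdef \epsilon \log \iiter{a}$ turns the first inclusion into $-\tilde u \in \partial F_1(e^{\tilde u/\epsilon}\cdot z)$, which is precisely the optimality characterization of $\iiter{u}$.

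\textbf{Uniqueness and function-space subtleties.} The dual objective $u\mapsto -F_1^*(-u) - \epsilon\langle e^{u/\epsilon}, z\rangle_X$ is strictly concave wherever $z>0$, so as noted preceding the statement, its maximizer in $\Linf(X)^n$ is unique when it exists. Hence $\tilde u = \iiter{u}$, i.e.\ $\iiter{a} = e^{\iiter u/\epsilon}$, which closes the induction. The main obstacle---and the reason for the two alternative regimes in the hypothesis---is guaranteeing that all objects live in the right spaces: the first regime uses $\log\iiter{a}\in \Linf$ to promote $\tilde u$ into an admissible $\Linf$ dual variable, while the second uses $\iiter u, \iter v \in \Linf$ to ensure that $e^{\iiter u/\epsilon}$ and $z = \Gibbs e^{\iter v/\epsilon}$ are bounded positive functions for which the scaling iteration and the subdifferential manipulations of Appendix~\ref{sec:ApxConvex} are all well-defined.
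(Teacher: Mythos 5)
Your proof is correct and follows essentially the same route as the paper's: an induction whose step identifies the first-order optimality condition of the $\KL$ proximal problem with that of the dual maximization in \eqref{eq_alternating_max}, then concludes by uniqueness of the dual maximizer. The only cosmetic difference is that the paper obtains the key primal--dual relation $s^\star = e^{u^\star/\epsilon}\cdot\Gibbs \iter{b}$ directly from the Fenchel--Rockafellar optimality conditions (after checking via Lemma~\ref{lem_KL_L1} that the prox is computed in $\Lun$), whereas you rederive it by explicit subdifferential calculus and Fenchel--Young inversion; both arguments rely on the same $\Linf$ hypotheses to make the identification legitimate.
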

The proof of this proposition makes use of the following Lemma. 
\begin{lemma}
\label{lem_KL_L1}
Let $(T,\d t)$ be a measure space and $v \in \Lun_{+}(T)$. For any $u : T \to \RR$ measurable, if $\KL(u|v)<\infty$ then $u\in \Lun_{+}(T)$.
\end{lemma}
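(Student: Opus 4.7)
The plan is to exploit the superlinearity of the entropy function $\phi_{\KL}(s) = s\log s - s + 1$ to bound $u$ pointwise by the integrand of $\KL(u|v)$ plus a term controlled by $v$.

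First, I would unpack the hypothesis. Finiteness of $\KL(u|v)$ already forces $u \geq 0$ almost everywhere and $u = 0$ a.e.\ on $\{v = 0\}$, since these are exactly the conditions under which the expression in \eqref{eq_KLdens} is finite rather than $+\infty$. On the complement one may rewrite
$$\KL(u|v) \;=\; \int_{\{v > 0\}} v(t)\, \phi_{\KL}\!\left(\frac{u(t)}{v(t)}\right) dt \;<\; \infty.$$

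Next, I would establish an elementary pointwise bound. Since $\phi_{\KL}(s)/s \to +\infty$ as $s \to \infty$ (indeed $\phi_{\KL}(s) = s(\log s - 1) + 1 \geq s$ as soon as $s \geq e^2$, and $\phi_{\KL} \geq 0$), there exists a constant $C > 0$ (take $C = e^2$) with $s \leq \phi_{\KL}(s) + C$ for every $s \geq 0$. Applying this with $s = u(t)/v(t)$ on $\{v > 0\}$ and multiplying by $v(t)$ gives
$$u(t) \;\leq\; v(t)\, \phi_{\KL}\!\left(\frac{u(t)}{v(t)}\right) + C\, v(t).$$

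Finally, integrate over $\{v > 0\}$: the first term contributes $\KL(u|v) < \infty$ by hypothesis, the second yields $C \|v\|_{L^1} < \infty$ since $v \in \Lun_+(T)$, while the integral over $\{v = 0\}$ vanishes because $u = 0$ there. This gives $\int_T u\, dt < \infty$, and combined with $u \geq 0$ a.e.\ concludes $u \in \Lun_+(T)$. There is no genuine obstacle; the only point requiring care is the case split $\{v > 0\}$ versus $\{v = 0\}$, dictated by the convention $0\log(0/0)=0$ underlying the definition of $\KL$.
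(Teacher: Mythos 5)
Your proof is correct and follows essentially the same route as the paper: establish a pointwise bound $s \leq \phi_{\KL}(s) + C$ valid for all $s \geq 0$, multiply by $v$, and integrate, with the $\{v=0\}$ set handled by the forced vanishing of $u$ there. The only (immaterial) difference is that the paper obtains the constant $C = e-1$ from the Fenchel--Young inequality with $y=1$, whereas you derive $C = e^{2}$ by an elementary estimate on $\phi_{\KL}$.
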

\begin{proof}
Without loss of generality, one can assume $v$ positive since for $\d t$-a.e. $t$, if $v(t)=0$ then $u(t)=0$. 
%By finiteness of $\KL(u|v)$, we only need to deal with the case when $u$ is nonnegative. 
The subgradient inequality at $1$ gives, for all $s\in \RR$, $s \leq \phi_{\KL}(s)+e^1-1$. 
Consequently,
\eq{
\int_T u \d t = \int_T (u(t)/v(t))v(t) \d t \leq \int_T \left(\varphi_{\KL}(u(t)/v(t))+e^1-1\right) v(t) \d t < \infty \, .
}
\end{proof}

\begin{proof}[Proof of Proposition \ref{prop_alternating}.]
Suppose that $\iter{v} \in \Linf(Y)^n$ and that $\iter{b}=e^{\iter{v}/\epsilon}$. One has $\Gibbs \iter{b} \in \Lun(X)^n$ and from Lemma \ref{lem_KL_L1}, one can compute $\prox^{\KL}_{F_1/\epsilon}(\Gibbs \iter{b})$ in $\Lun(X)^n$.
The Fenchel-Rockafellar Theorem gives (see the proof of Theorem \ref{prop_duality} for more details): 
\eq{
\sup_{u \in \Linf(X)^n} -F_1^*(-u) - \epsilon \langle e^{\frac{u}\epsilon}, \Gibbs e^{\frac{\iter{v}}\epsilon} \rangle
= \min_{s \in \Lun(X)^n} F_1(s) + \epsilon \KL (s|\Gibbs e^{\frac{\iter{v}}\epsilon})
}
and the optimality conditions state that $u^\star$ maximizes the problem on the right if and only if the minimizer $s^\star \eqdef \prox^{\KL}_{F_1/\epsilon}(\Gibbs e^{\frac{\iter{v}}\epsilon})$ of the problem on the left belongs to the subdifferential of $u \mapsto \langle e^{\frac{u}\epsilon}, \Gibbs e^{\frac{\iter{v}}\epsilon}\rangle$ at the point $u^\star$. 
That is (see Appendix \ref{sec:ApxDivergences}) if and only if for $\d x$ almost every $x\in X$, 
\eq{
s^\star(x) = e^{u^\star(x)/\epsilon} \cdot (\Gibbs e^{\iter{v}/\epsilon})(x)
}
Thus if $\epsilon \log \iiter{a}$ belongs to $\Linf(X)^n$ or if $u^\star\in \Linf(X)^n$ exists, then $u^\star=\epsilon \log \iiter{a}$. The rest of the proof is done by induction.
\end{proof}

%%%%%%%%%%%%%%%%%%%%%%%%%%%%%%%%%%%%%%%%%%%%%%%%%
\subsection{Existence of the iterates for integral functionals}

Our next step is to give conditions on $F_1$ and $F_2$ that guarantee the existence of the scaling iterates \eqref{eq-scaling-iterates} and an equivalence with alternate maximization on the dual~\eqref{eq_alternating_max}. This is provided by Theorem \ref{thm-sinkhorn-div}, where it is required that $F_1$ and $F_2$ are integral functionals, as we define now. 

\begin{definition}[Normal integrands and Integral functionals~\cite{rockafellar2009variational}]
\label{def_integralfunctional}
A function $f : X \times \RR^n \to \RR \cup \{\infty\}$ is called a \emph{normal integrand} if its epigraphical mapping $X \ni x \mapsto \epi f(x,\cdot)$ is closed-valued and measurable. A convex integral functional is a function $F:\Lun(X)^n\to \RR\cup \{\infty\}$ of the form
\eq{
\label{eq_F_separable}
F (s) = I_{f}(s) \eqdef \int_X f(x,s(x)) \d x
}
where $f$ is a normal integrand and $f(x,\cdot)$ is convex for all $x\in X$. 
In this paper, $F$ is an \emph{admissible} integral functional if moreover for all $x\in X$, $f(x,\cdot)$ takes nonnegative values, has a domain which is a subset of $ [0,\infty[^n$ and if there exists $s\in \Lun(X)^n$ such that $I_f(s)<\infty$.
\end{definition}
The concept of normal integrands allows to deal conveniently with measurability issues. For finite dimensional problems (when $X$ and $Y$ have a finite number of points), integral functionals are simply sums of pointwise lower semicontinuous functions. The following proposition shows that for such functionals, conjugation and subdifferentiation can be performed pointwise.
\begin{proposition}\label{prop_normalconjug}
If $F$ is an admissible integral functional associated to the convex normal integrand $f$, then $F$ is convex and weakly lower semicontinuous, $f^*$ is also a normal convex integrand, $F^*=I_{f^*}$ and
\begin{align*}
\partial F(s) &= \{ u \, ; \, u(x) \in \partial f(x,s(x)), \d x \text{ a.e.}\},  \\
\partial F^*(u) &= \{ s \, ; \, s(x) \in \partial f^*(x,u(x)), \d x \text{ a.e.}\}, 
\end{align*}
where conjugation and subdifferentiation on $f$ are w.r.t.\ the second variable.
\end{proposition}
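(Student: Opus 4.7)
The plan is to invoke and adapt the classical theory of convex integral functionals \`a la Rockafellar, handling the five claims in order.

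\emph{Convexity and lower semicontinuity.} Convexity of $F$ is immediate from pointwise convexity of $f(x,\cdot)$ and linearity of integration. For strong lower semicontinuity on $\Lun(X)^n$, take a sequence $s_k \to s$ in $\Lun$; extracting a subsequence that converges $\d x$-a.e., Fatou's lemma applies because $f(x,\cdot)$ is nonnegative and lower semicontinuous, yielding $F(s) \leq \liminf F(s_k)$. Strong lower semicontinuity together with convexity gives weak lower semicontinuity via Mazur's lemma.

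\emph{Normality of $f^*$.} Pointwise convexity and lower semicontinuity of $f^*(x,\cdot)$ are automatic from the definition of the Legendre conjugate. The measurability of the epigraphical mapping $x \mapsto \epi f^*(x,\cdot)$ is the standard ``normal integrand is preserved under conjugation'' result; I would invoke Theorem~14.50 in~\cite{rockafellar2009variational}, whose proof uses measurable selections applied to countable dense families of affine minorants of $f(x,\cdot)$.

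\emph{Conjugate formula.} The inequality $F^*(u) \leq I_{f^*}(u)$ is the easy direction: for every $s \in \Lun(X)^n$, the pointwise Fenchel-Young inequality $\dotp{u(x)}{s(x)} \leq f(x,s(x)) + f^*(x,u(x))$ integrates to $\dotp{u}{s} - F(s) \leq I_{f^*}(u)$, and taking the supremum over $s$ gives the claim. For the reverse inequality $F^*(u) \geq I_{f^*}(u)$, fix $u \in \Linf(X)^n$. A measurable selection theorem for normal integrands (Rockafellar--Wets 14.37, or~\cite{rockafellar1968integrals}) provides, for each $\delta>0$ and $R>0$, a measurable $\sigma_{\delta,R}:X \to \RR^n$ such that $\sigma_{\delta,R}(x) \in \dom f(x,\cdot)$, $|\sigma_{\delta,R}(x)| \leq R$, and $\dotp{u(x)}{\sigma_{\delta,R}(x)} - f(x,\sigma_{\delta,R}(x))$ is within $\delta$ of $\min(f^*(x,u(x)), R)$. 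Boundedness of $\sigma_{\delta,R}$ together with $\d x$ being a probability measure gives $\sigma_{\delta,R} \in \Lun(X)^n$, so it is admissible in the supremum defining $F^*(u)$. Integrating and letting $R \to \infty$, $\delta \to 0$ (using monotone convergence on the truncation) yields $F^*(u) \geq I_{f^*}(u)$. The main obstacle is precisely this measurable-selection/truncation step, since a pointwise maximizer of $\dotp{u(x)}{\cdot}-f(x,\cdot)$ need not lie in $\Lun$; the truncation $|\sigma| \leq R$ combined with finiteness of the reference measure is what makes it go through.

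\emph{Subdifferential formulas.} These follow from Fenchel-Young equality. Indeed $u \in \partial F(s)$ iff $F(s) + F^*(u) = \dotp{u}{s}$, which using $F^* = I_{f^*}$ rewrites as
\eq{
\int_X \bigl[f(x,s(x)) + f^*(x,u(x)) - \dotp{u(x)}{s(x)}\bigr] \d x = 0.
}
The integrand is $\d x$-a.e. nonnegative by the pointwise Fenchel-Young inequality, hence vanishes $\d x$-a.e., which is equivalent to $u(x) \in \partial f(x,s(x))$ $\d x$-a.e. The formula for $\partial F^*$ follows by the same argument after exchanging the roles of $f$ and $f^*$, using that $f^{**}=f$ pointwise (as $f(x,\cdot)$ is convex lsc) and that the first part of the proposition applied to $f^*$ gives $(I_{f^*})^* = I_f = F$ on the relevant domain.
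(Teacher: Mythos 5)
Your proof is correct in substance, but it takes a genuinely different route from the paper. The paper's proof is essentially a citation: it invokes the integral-functional duality theorem of Rockafellar's 1976 lecture notes, whose hypotheses are (a) a feasible point $s^\star\in\Lun(X)^n$ for $I_f$ (supplied directly by the admissibility assumption) and (b) a feasible point $u^\star\in\Linf(X)^n$ for $I_{f^*}$; the entire content of the paper's argument is the observation that $u^\star=0$ works, since $f^*(x,0)=-\inf_s f(x,s)\in[-f(x,s^\star(x)),0]$ is integrable by nonnegativity of $f$. You instead re-derive the classical theorem from scratch: Fatou plus Mazur for weak lower semicontinuity, Fenchel--Young for the easy inequality $F^*\le I_{f^*}$, a measurable-selection-with-truncation argument for the hard inequality, and the Fenchel equality characterization of subdifferentials for the last two formulas. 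What your route buys is self-containedness and the fact that the properness of $I_{f^*}$ never needs to be checked separately (it falls out of the truncation argument); what the paper's route buys is brevity and the delegation of the genuinely delicate measurable-selection step to the reference. One small imprecision in your hard direction: the selection $\sigma_{\delta,R}$ gets you within $\delta$ of $\sup_{|s|\le R}\bigl(\dotp{u(x)}{s}-f(x,s)\bigr)$, which need not dominate $\min(f^*(x,u(x)),R)$ for fixed $R$ (the unconstrained supremum may only be approached as $|s|\to\infty$); the correct bookkeeping is that these truncated suprema increase to $f^*(x,u(x))$ and are bounded below by the integrable function $\dotp{u(\cdot)}{s^\star(\cdot)}-f(\cdot,s^\star(\cdot))$ once $R$ exceeds $|s^\star(x)|$, after which monotone convergence applies. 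This is exactly the standard argument in the references you cite, so it does not invalidate the approach. Your treatment of $\partial F^*$ also implicitly restricts to the $\Lun$-part of the subdifferential (ignoring possible singular elements of $(\Linf)^*$), which is consistent with how the paper states the result.
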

\begin{proof}
This property can be found in~\cite{rockafellar1976integral} under the assumption of existence of a feasible point $s^\star\in \Lun(X)^n$ for $I_f$ and a feasible point $u^\star\in\Linf(X)^n$ for $I_{f^*}$. Our admissibility criterion requires the existence of $s^\star$ and one has 
\eq{
I_{f^*}(0) = \int_X f^*(x,0) \d x = - \int_X  \inf_{s\in \RR^n} f(x,s) \d x <\infty
}
since $\inf_s f(x,s) \in [0, f(x,s^\star(x))]$.
\end{proof}
We now prove a general result on the ``separability'' of the $\KL$ proximal operator.
The Kullback-Leibler divergence between two vectors $s$ and $z$ in $\RR^n$ is defined as 
\eql{
\label{eq_KLpointwise}
\KLp(s|z) \eqdef \sum_{k=1}^n s_k \log \left( \frac{s_k}{z_k} \right)-s_k + z_k
}
if $(z_k=0) \Rightarrow (s_k=0)$, and $+\infty$ otherwise, with the convention $0 \log(0/0)=0$. While we use a separate notation for the sake of clarity, this definition is actually consistent with the definition of $\KL$ if one interprets $s$ and $z$ as vector densities on a space $(X,\d x)$ which is a singleton with unit mass. 
\begin{proposition}
\label{prop_proxKLcont}
Let $s : X \to \RR^n$ be measurable. If $F=I_f$ is an admissible integral functional then for almost all $x\in X$,
\eq{
\big(\prox^{\KL}_{\frac{1}{\epsilon}F}(s)\big)(x) = \prox^{\KLp}_{\frac{1}{\epsilon}f(x,\cdot)}(s(x)) \, .
}
%Thus $\prox^{\KL}_{\frac{1}{\epsilon}F}(s)$ is non-empty (and is a singleton) if and only if for almost every $x\in X$, there exists $\alpha(x) \in \dom f(x,\cdot)$ such that
%\eql{\label{eq_feasibleprox}
% 	(s_i(x)=0) \Rightarrow (\alpha_i(x) =0), \forall i\in \{1,\ldots,n\}\, .
%}
%
%In this case the proximal set is the singleton $\tilde{s} : X\to \RR^n$ which satisfies $\d x$ a.e.\ and for all $i\in \{1,\dots,n\}$,
%\eq{
%\begin{cases}
%0 =  \tilde{s}_i(x)  & \text{if $i\in I_0(x)$} \\
%0 = u_i(x) + \epsilon \log (\tilde{s}_i(x)/s_i(x)) & \text{otherwise,}
%\end{cases}
%}
%for some $u(x)\in \partial (f(x,\cdot) \circ Z_x) (\tilde{s}(x))$, where $Z_x$ is the operator which sets to zero the components of indices in $I_0(x)$ and leave the other unchanged. \todo{G: possible to express directly without ``$\Rightarrow$'' ? also maybe use an equation to define $I_0$} The set $I_0(x)$ is the union of (i) the indices $i$ such that $s_i(x)=0$ and (ii) the coordinates $j$ such that $f(x,\alpha)<\infty \Rightarrow \alpha_j = 0$ for all $\alpha$ as in \eqref{eq_feasibleprox}. 
\end{proposition}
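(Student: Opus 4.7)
The plan is to reduce the infinite-dimensional minimization defining $\prox^{\KL}_{F/\epsilon}(s)$ to a pointwise problem by invoking the interchange of minimization and integration for normal integrands. The starting point is the observation that, by Fubini--Tonelli and the definition \eqref{eq_KLdens}, for any measurable $r : X \to \RR^n_+$ one has $\KL(r|s) = \int_X \KLp(r(x)|s(x)) \d x$, so the prox objective can be written as
\[
F(r) + \epsilon \KL(r|s) = \int_X g(x,r(x)) \d x, \qquad g(x,z) \eqdef f(x,z) + \epsilon \KLp(z|s(x)).
\]

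First I would verify that $g$ is a convex normal integrand. Since $s$ is measurable and $\KLp$ is jointly lower semicontinuous on $\RR^n \times \RR^n_+$, the map $(x,z) \mapsto \KLp(z|s(x))$ is a normal integrand with $z \mapsto \KLp(z|s(x))$ strictly convex, proper and lsc; summing with the normal integrand $f$ preserves these properties. Then I would apply the classical interchange result for integrals of normal integrands (see \cite{rockafellar2009variational} or \cite{rockafellar1976integral}, already invoked in the proof of Proposition \ref{prop_normalconjug}): as soon as some $r_0 \in \Lun(X)^n$ makes the objective finite, one has
\[
\inf_{r \in \Lun(X)^n} \int_X g(x,r(x)) \d x = \int_X \inf_{z \in \RR^n} g(x,z) \d x,
\]
and any measurable selection of the pointwise $\argmin$ that lies in $\Lun(X)^n$ realises the infimum. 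If no such $r_0$ exists, both sides of the claimed identity reduce to the ``empty'' argmin and there is nothing to prove.

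Next I would show that a measurable pointwise minimizer exists and is unique almost everywhere. Strict convexity of $\KLp(\cdot|s(x))$ combined with convexity of $f(x,\cdot)$ yields uniqueness, while coercivity of $\KLp(\cdot|s(x))$ together with lower semicontinuity of $g(x,\cdot)$ gives existence wherever $g(x,\cdot)$ has finite infimum (which, by the interchange formula, happens for $\d x$-a.e.\ $x$ when the $\Lun$-level problem is feasible). The standard measurable selection theorem for normal integrands then produces a measurable function $r^\star(x) = \prox^{\KLp}_{f(x,\cdot)/\epsilon}(s(x))$.

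Finally I would check $\Lun$-integrability to identify $r^\star$ with $\prox^{\KL}_{F/\epsilon}(s)$. Since $f \geq 0$ by admissibility,
\[
\epsilon \KL(r^\star | s) \;\leq\; \int_X g(x,r^\star(x)) \d x \;\leq\; \int_X g(x,r_0(x)) \d x \;<\; \infty,
\]
so Lemma \ref{lem_KL_L1} applied componentwise yields $r^\star \in \Lun(X)^n$. Strict convexity of $\KL(\cdot|s)$ on its domain makes the minimizer of the prox problem unique, and $r^\star$ attains the infimum, so $r^\star = \prox^{\KL}_{F/\epsilon}(s)$ a.e., which is the claim. The main obstacle is the interchange step: establishing that $g$ is normal and that the measurable pointwise $\argmin$ can be selected so that the integral reproduces the infimum; once this is in place, strict convexity of $\KLp$ and the $\KL$-to-$\Lun$ lemma handle uniqueness and integrability routinely.
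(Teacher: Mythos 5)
Your proposal is correct and follows essentially the same route as the paper: write the objective as $I_g$ for the normal convex integrand $g(x,z)=f(x,z)+\epsilon\,\ol{\KL}(z|s(x))$ and invoke the minimization-interchange theorem for integral functionals to reduce to the pointwise problem. The extra steps you supply (measurable selection, uniqueness from strict convexity of $\ol{\KL}$, and $\Lun$-membership via Lemma~\ref{lem_KL_L1}) are sound refinements of details the paper leaves to the cited references.
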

\begin{proof}
The problem which defines the proximal operator is that of minimizing $ I_f(z) + \KL (z|s)\eqdef I_g(z)$ over measurable functions $z:X\to \RR^n$ with
\eq{
	g : (x,z)\in X\times \RR^n \mapsto f(x,z) + \KLp(z|s(x))\, .
}
The function $(x,z) \mapsto \KLp(z|s(x))$ is a convex normal integrand by~\cite[Prop. 14.30 and 14.45c]{rockafellar2009variational}. Thus $g$ is itself a normal convex integrand, as the sum of normal convex integrands~\cite[Prop. 14.44]{rockafellar2009variational}.  Then a minimization interchange result~\cite[Thm. 14.60]{rockafellar2009variational}] states that minimizing $I_g$ is the same as minimizing $g$ pointwise. 
%
%Finally, the description of the proximal set is done by looking at pointwise optimality conditions: the coordinates which vanish must do so for feasibility. Then the operator $Z_x$ allows to build a sub-problem for which there is a feasible point at which $\KLp(\cdot|s(x))\circ Z_x$ is continuous. The subdifferential operator thus distributes over sums of functions and we conclude by writing the first order optimality condition.
\end{proof}
By Proposition \ref{prop_normalconjug}, if $F_1$ and $F_2$ are admissible integral functionals then $F^*_1$ and $F^*_2$ are also integral functionals. So the alternating optimization on \eqref{eq-dual-pbm} can be relaxed to the space of measurable functions and still have a meaning:
\eql{
\label{eq_alternating_max_point}
\left\{\begin{aligned}
\iiter{u}  &= \argmax_{u:X\to \RR^n} - I_{f_1^*}(-u) - \epsilon \langle e^{\frac{u}\epsilon}, \Gibbs e^{\frac{\iter{v}}\epsilon} \rangle_X \\
\iiter{v}  &= \argmax_{v:Y\to \RR^n} - I_{f_2^*}(-v) - \epsilon  \langle e^{\frac{v}\epsilon}, \transp{\Gibbs} e^{ \frac{\iiter{u}}\epsilon} \rangle_Y\, .
\end{aligned}
\right.
}
The following Theorem gives existence, uniqueness of this iterates and a precise relation with the scaling iterates \eqref{eq-scaling-iterates}.
\begin{theorem}
\label{thm-sinkhorn-div}
Let $F_1$ and $F_2$ be admissible integral functionals as in Definition~\eqref{def_integralfunctional} associated to the normal integrands $f_1$ and $f_2$.
Assume that for all $x\in X$ and $y\in Y$, there exists points $s_1$ and $s_2$ with strictly positive coordinates such that $f_1(x,s_1)<\infty$ and $f_2 (y,s_2)<\infty$ and that $K$ takes positive values.
Define $a^{(0)}=1$ and let $(\iter{a},\iter{b})$ be the scaling iterates \eqref{eq-scaling-iterates}.
Then, with initialization $v^{(0)}=0$, the iterates $(\iter{u},\iter{v})$ in~\eqref{eq_alternating_max_point} are uniquely well-defined and for all $\ell \in \NN$ one has $(\iter{a},\iter{b}) = (e^{\frac{\iter{u}}{\epsilon}},e^{\frac{\iter{v}}{\epsilon}})$.
%
%Moreover, the iterations are left unchanged if the cost $c$ is replaced by 
%	\eq{
%	\tilde{c} : (x,y) \mapsto c(x,y) + \iota_{\{\dom f_1(x,\cdot) \neq \{0 \}\}}(x) + \iota_{\{\dom f_2(y,\cdot) \neq \{0 \}\}}(y)\, .
%	}
%	Denoting $\tilde{K}(x,y) \eqdef e^{-\tilde{c}(x,y)/\epsilon}$, $\tilde{X} \eqdef \{ x\in X \, ; \, P^X_\# \tilde{K} (x)>0\}$ and $\tilde{Y} \eqdef \{y\in Y \, ; \, P^Y_\# \tilde{K} (y)>0\}$, it holds for $\ell\geq1$,
%	\eq{
%	\iter{a}(x)>0 \Leftrightarrow x \in \tilde{X}
%	\qandq
%	\iter{b}(y)>0 \Leftrightarrow y \in \tilde{Y}\, .
%	}
\end{theorem}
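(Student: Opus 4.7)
The plan is to prove the theorem by induction on $\ell \in \NN$, carrying the joint invariant that $a^{(\ell)}, b^{(\ell)}$ are well-defined measurable nonnegative functions, that $u^{(\ell)}, v^{(\ell)}$ are the unique maximizers of the relaxed dual subproblems \eqref{eq_alternating_max_point} (allowing values in $\RR\cup\{-\infty\}$), and that $(a^{(\ell)},b^{(\ell)})=(e^{u^{(\ell)}/\epsilon},e^{v^{(\ell)}/\epsilon})$. The base case is trivial from $a^{(0)}=1 \leftrightarrow u^{(0)}=0$ and $v^{(0)}=0 \leftrightarrow b^{(0)}=1$.

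For the inductive step for $a^{(\ell+1)}$, let $g \eqdef \Gibbs b^{(\ell)}$. Since $K$ is componentwise positive and $b^{(\ell)}$ is not identically zero (inductively, by the strictly positive feasible point assumption), $g(x)$ is strictly positive coordinatewise at every $x$; moreover $g \in \Linf(X)^n$ because $K\in\Linf$ and by the induction hypothesis $b^{(\ell)}$ is measurable. By Proposition \ref{prop_proxKLcont}, the numerator of the scaling update reduces pointwise to
\begin{equation*}
s^{*}(x) \eqdef \prox^{\KLp}_{f_1(x,\cdot)/\epsilon}(g(x)) = \argmin_{z \in \RR^n}\, f_1(x,z) + \epsilon\,\KLp(z\,|\,g(x)).
\end{equation*}
For each $x$ this is a strictly convex (since $\KLp(\cdot|g(x))$ is strictly convex where finite, and $g(x)>0$), proper (the strictly positive feasible point $s_1$ lies in the domain), coercive problem, hence it has a unique minimizer; the map $x\mapsto s^*(x)$ is measurable by the normal integrand framework of \cite{rockafellar2009variational}.

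By Proposition \ref{prop_normalconjug}, $F_1^* = I_{f_1^*}$, so the dual subproblem defining $u^{(\ell+1)}$ also decouples pointwise:
\begin{equation*}
u^{(\ell+1)}(x) = \argmax_{u \in \RR^n}\,\bigl(-f_1^*(x,-u) - \epsilon\,e^{u/\epsilon}\,g(x)\bigr).
\end{equation*}
The term $e^{u/\epsilon}g(x)$ is strictly convex in $u$ because $g(x)>0$; the existence of a strictly positive $s_1$ with $f_1(x,s_1)<\infty$ forces $f_1^*(x,\cdot)$ to majorize a coercive linear function in the relevant half-space, preventing the objective from being unbounded above. Hence the maximizer exists (possibly with $-\infty$ components, matching zero components of $s^*(x)$) and is unique. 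The pointwise Fenchel-Rockafellar optimality relation (using the subdifferential calculus of Appendix \ref{sec:ApxDivergences}) yields $s^*(x)=e^{u^{(\ell+1)}(x)/\epsilon}\,g(x)$, hence $a^{(\ell+1)}(x) = s^*(x)/g(x) = e^{u^{(\ell+1)}(x)/\epsilon}$, closing the induction for the $a$-update. The argument for $(b^{(\ell+1)},v^{(\ell+1)})$ is symmetric, using $\transp{\Gibbs}a^{(\ell+1)}$ in place of $g$.

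The main obstacles I expect are twofold. First, justifying that the pointwise dual maximum is attained in $\bar{\RR}^n$ rather than escaping to $+\infty$: this is precisely where the hypothesis of a strictly positive feasible point enters, via the growth it forces on $f_i^*$. Second, verifying measurability of the pointwise selections $x\mapsto s^*(x)$ and $x\mapsto u^{(\ell+1)}(x)$ and confirming that the invariant $b^{(\ell)}\not\equiv 0$ propagates so that $\Gibbs b^{(\ell)}>0$ on $X$ at every step; both follow from the normal integrand machinery combined with the positivity of $K$, but must be cited carefully.
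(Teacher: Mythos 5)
Your proposal is correct and follows essentially the same route as the paper's proof: induction on $\ell$, pointwise decomposition of the proximal step via Proposition~\ref{prop_proxKLcont}, pointwise Fenchel--Rockafellar duality giving $e^{u/\epsilon}=s^*/\Gibbs b^{(\ell)}$, and measurability of the selection via the normal-integrand results of Rockafellar--Wets. The only cosmetic difference is that the paper deduces measurability of $u^{(\ell+1)}$ from that of the pointwise minimizer $s^*$ through the primal-dual formula rather than arguing it separately for the dual problem.
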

\begin{proof}
Suppose that $\iter{v}: Y\to \RR^n$ is a well-defined measurable function and that $\iter{b}=e^{\iter{v}/\epsilon}$. As $K$ is positive, $\Gibbs \iter{b}$ is positive $\d x$ a.e. Let $\iiter{a}$ be computed with  \eqref{eq-scaling-iterates}. Thanks to Proposition \ref{prop_proxKLcont}, the proximal operator can be decomposed as pointwise optimization problems and our assumptions allows to apply Fenchel-Rockafellar (see Appendix~\ref{sec:ApxConvex}) in the case where both problems reach their optima
\eq{
\max_{u\in \RR^n} - f_1^*(x,-u) - \epsilon \langle e^{u/\epsilon}, \Gibbs \iter{b} \rangle 
= \min_{s\in \RR^n} f_1(x,s) + \epsilon \KLp(s|\Gibbs \iter{b}(x))
}
with the relation between optimizers: $e^{u/\epsilon} = s/\Gibbs \iter{b}(x)$. 
This formula guarantees that the function of pointwise maximizers $x\mapsto u(x)$ is measurable since the function of pointwise minimizers $x\mapsto s(x)$ is uniquely well-defined and measurable by \cite[Thm. 14.37]{rockafellar2009variational}. Indeed, the minimized function is a strictly convex normal integrand because it is shown in Appendix \ref{sec:ApxDivergences} that $\KLp(\cdot|\Gibbs \iter{b}(x))$ is a normal integrand and sum of normal integrands are normal \cite[Prop. 14.44]{rockafellar2009variational}.
%since there exists a measurable selection $t\mapsto s(t)$ of pointwise minimizer (from the property of normal integrands). 
This shows that $\iiter{a}=e^{\frac{\iiter{u}}{\epsilon}}$ and one concludes by induction.
\end{proof}

%%%%%%%%%%%%%%%%%%%%%%%%%%%%%%%%%%%%%%%%%%%%%%%%%
\subsection{Convergence Analysis}
This Section gives a fixed point result and a convergence result for a particular case. The finite dimensional case is postponed to the next Section.

The following proposition sheds some light on the name ``scaling'' given to iterations~\eqref{eq-scaling-iterates}. It comes from the fact that these iterations allow to recover a solution to~\eqref{eq-general-regul-KL} by multiplying the kernel $K$ with positive functions, interpreted as scalings.

\begin{proposition}
\label{prop_fixpoint}
Under the assumptions of Theorem \ref{thm-sinkhorn-div}, if the scaling iterations \eqref{eq-scaling-iterates} admit a fixed point $(a,b)$ such that $\log a \in\Linf(X)^n$ and $\log b \in \Linf(Y)^n$ then $(\epsilon \log a,\epsilon \log b)$ is the unique solution of \eqref{eq-dual-pbm} and the function $r$ defined for each $k=1,\ldots,n$ by $r_k(x,y)= a_k(x) K_k(x,y) b_k(y)$ is the unique solution of \eqref{eq-general-regul-KL}.
\end{proposition}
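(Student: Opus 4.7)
The plan is to leverage Proposition \ref{prop_alternating}, which identifies the scaling iterates (under the $L^\infty$ assumption on $\log a,\log b$) with one step of block-coordinate maximization on the dual problem $(D_\epsilon)$. Setting $u := \epsilon \log a \in \Linf(X)^n$ and $v := \epsilon \log b \in \Linf(Y)^n$, the fixed-point condition means that running the alternate dual maximization \eqref{eq_alternating_max} from the initialization $v^{(0)} = v$ returns $(u,v)$ itself. This is exactly the pair of block optimality conditions
\[
u \in \argmax_{\tilde u \in \Linf(X)^n}\! -F_1^*(-\tilde u) - \epsilon \bigl\langle e^{\tilde u/\epsilon},\, \Gibbs e^{v/\epsilon}\bigr\rangle, \qquad
v \in \argmax_{\tilde v \in \Linf(Y)^n}\! -F_2^*(-\tilde v) - \epsilon \bigl\langle e^{u/\epsilon},\, \Gibbs e^{\tilde v/\epsilon}\bigr\rangle.
\]

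The next step is to upgrade these partial optimality conditions into joint global optimality for the dual objective
\[
\Phi(u,v) \eqdef -F_1^*(-u) - F_2^*(-v) - \epsilon \bigl\langle e^{(u\oplus v)/\epsilon}-1,\,K\bigr\rangle.
\]
The function $\Phi$ is concave on $\Linf(X)^n\times\Linf(Y)^n$. The coupling term is Gâteaux-differentiable, and the penalty $-F_1^*(-u)-F_2^*(-v)$ decouples across the two blocks, so the joint subdifferential of $\Phi$ at $(u,v)$ factorizes as the product of the two partial subdifferentials. The partial first-order conditions above give $0 \in \partial_u \Phi(u,v)$ and $0 \in \partial_v \Phi(u,v)$, hence $0 \in \partial \Phi(u,v)$. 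By concavity, $(u,v)$ is a global maximizer of \eqref{eq-dual-pbm}.

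Given a dual maximizer, Theorem \ref{prop_duality} (strong duality and primal-dual optimality relations) immediately provides the unique primal minimizer via
\[
r_k(x,y) = e^{u_k(x)/\epsilon}\,K_k(x,y)\,e^{v_k(y)/\epsilon} = a_k(x)\,K_k(x,y)\,b_k(y),
\]
and its primal uniqueness is automatic from the strict convexity of $\KL(\cdot|K)$. For uniqueness on the dual side, any other $\Linf$-maximizer $(u',v')$ must yield the same $r$ through the primal-dual relation, which forces $u'(x)+v'(y) = u(x)+v(y)$ on $\{K>0\}$, and then the block first-order condition for $u$ pins down $u' = u$ (and similarly for $v$) since $\tilde u \mapsto F_1^*(-\tilde u) + \epsilon \langle e^{\tilde u/\epsilon},\Gibbs e^{v/\epsilon}\rangle$ is strictly convex in $\tilde u$ whenever $\Gibbs e^{v/\epsilon} > 0$, which holds by the positivity of $K$.

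I expect the main obstacle to be precisely the last point, dual uniqueness: the coupling term is invariant under the shift $(u,v)\mapsto(u+c,v-c)$ for constant vectors $c$, so uniqueness genuinely requires the strict convexity contribution of the conjugates $F_i^*$, pieced together with the positivity of $K$ and essentially bounded $\log a,\log b$. The rest of the argument is a clean bookkeeping assembly of Proposition \ref{prop_alternating} and Theorem \ref{prop_duality}.
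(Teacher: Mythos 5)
Your argument for the main content of the proposition is correct, but it takes a genuinely different route from the paper. The paper's proof stays on the primal--dual optimality system: it uses the pointwise characterization of the $\KL$ proximal operator (Proposition~\ref{prop_proxKLcont}) to read off, from the fixed-point equations, that $-\epsilon\log a \in \partial F_1(P^X_\# r)$ and $-\epsilon\log b \in \partial F_2(P^Y_\# r)$ with $r_k = a_k K_k b_k$, and then invokes the equivalence in Theorem~\ref{prop_duality} to conclude optimality of both $r$ and $(\epsilon\log a,\epsilon\log b)$. You instead pass through Proposition~\ref{prop_alternating} to translate the fixed point into the two block-optimality conditions on the dual, and then upgrade block optimality to joint optimality using the fact that the nonsmooth part $-F_1^*(-u)-F_2^*(-v)$ is separable across blocks while the coupling term is differentiable (and continuous, so the subdifferential sum rule applies --- this is exactly the continuity of $G^*$ established in the proof of Theorem~\ref{prop_duality}). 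Both routes are legitimate; yours makes explicit the standard ``two-block alternating maximization with separable nonsmooth terms'' mechanism, while the paper's is a direct verification of \eqref{eq-primal-dual}.

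The one genuine defect is your treatment of dual uniqueness. Having shown $u'\oplus v' = u\oplus v$, hence $u'=u+c$, $v'=v-c$ for a constant vector $c$, you argue that strict convexity of $\tilde u \mapsto F_1^*(-\tilde u)+\epsilon\langle e^{\tilde u/\epsilon},\Gibbs e^{v/\epsilon}\rangle$ forces $u'=u$. But the block condition satisfied by $u'$ involves $\Gibbs e^{v'/\epsilon}=e^{-c/\epsilon}\Gibbs e^{v/\epsilon}$, not $\Gibbs e^{v/\epsilon}$; the two block problems differ precisely by the shift, and $u'=u+c$ is the unique maximizer of the shifted one, so no contradiction arises for $c\neq 0$. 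Indeed, for balanced transport ($F_i=\iota_{\{=\}}(\cdot|p_i)$ with positive $p_i$, which satisfies the hypotheses of Theorem~\ref{thm-sinkhorn-div}) the dual objective is invariant under $(u,v)\mapsto(u+c,v-c)$ and dual maximizers are genuinely non-unique. So the ``unique solution of \eqref{eq-dual-pbm}'' claim cannot be proved as stated; note that the paper's own proof silently establishes only that $(\epsilon\log a,\epsilon\log b)$ is \emph{a} maximizer (and that $r$ is the unique primal minimizer). You correctly identified the shift invariance as the obstacle, but your proposed resolution does not close it.
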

\begin{proof}
As a consequence of Proposition \ref{prop_proxKLcont}, on can write the optimality condition of a fixed point of \eqref{eq_alternating_max_point} for almost every $x\in X$ as
\eq{
u_k(x) = \epsilon \log \left(\frac{a_k(x)\cdot \Gibbs b_k(x)}{\Gibbs b_k(x)} \right)
}
for some $u (x)\in -\partial f_1(x,a(x)\cdot\Gibbs b(x))$. Thus $-\epsilon \log(a) \in \partial I_{f_1}(P^X_\# r)$ because $P^X_\# r = a \cdot \Gibbs b$ (the dot denotes componentwise multiplication). Similar derivations for $b$ show that the couple $(\epsilon \log a,\epsilon \log b)$ and $r$ satisfies the primal dual optimality conditions \eqref{eq-primal-dual}.
\end{proof}

Sinkhorn's algorithm (the special case of the scaling iterations \eqref{eq-scaling-iterates} obtained when $F_1=\iota_{\{p\}}(s)$ and $F_2 = \iota_{\{q\}}(s)$ are the convex indicators of equality constraints) is known to converge at a linear rate. This property is usually shown (see for instance~\cite{franklin1989scaling}) by using the contraction property of the operator $\Gibbs$ for the Hilbert metric (a projective metric on the cone of nonnegative functions).
Using a similar approach, based on the related \textit{Thompson} metric, we now show the linear convergence of the scaling iterates \eqref{eq-scaling-iterates} in another special case, which is central in unbalanced optimal transport: when $F_1$ and $F_2$ are $\KL$ divergences with respect to fixed densities. 

An approach involving the Hilbert metric would still be possible, but the use of the Thompson metric allows for a very short proof and a convergence rate which does not depend on a bound on the cost. This metric is defined as follows.
\begin{definition}[Thompson part metric on $\Linf_{+}$~\cite{thompson1963certain}]
For $r,s \in \Linf_+(X)$, let $ M(r/s) \eqdef \inf \enscond{ \alpha \geq 0}{r \leq \alpha s}$ (or $\infty$ if that set is empty). The Thomson part metric is defined by 
\eq{
d(r,s) = \max \{ \log M(r/s), \log M(s/r) \}\, .
}
\end{definition}
Key properties of this metric are:
\begin{itemize}
\item if $T: \Linf(X) \to \Linf(Y)$ is a $z$-homogeneous order-preserving (or order-reversing) operator, then $d(Tx,Ty) \leq |z|\cdot d(x,y)$;
\item the equivalence relation $r \sim s \Leftrightarrow d(r,s)<+ \infty$ generates a partition of $(\Linf(X),d)$ and each part is a complete metric space. 
In particular, the set $\enscond{ s \in \Linf_+(X)}{\log s \in \Linf(X)}$ endowed with the Thompson metric form a complete metric space.
\end{itemize}
\begin{theorem}\label{th_convergenceKL}
Let $p$ and $q$ be such that $\log p \in \Linf(X)$ and $\log q \in \Linf(Y)$ and define
\eq{
F_1 = \lambda_1 \KL(\cdot |p) \qandq F_2 = \lambda_2 \KL(\cdot |q)
} 
for some $\lambda_1, \lambda_2>0$. Assume that the kernel $K$ is lower bounded by a positive real number.
If $(\iter{a},\iter{b})$ are the scaling iterates \eqref{eq-scaling-iterates} initialized with $b^{(0)}=1$, then $(x,y)\mapsto \iter{a}(x)K(x,y)\iter{b}(y)$ converges at a linear rate (for the Thompson metric) to a solution of \eqref{eq-general-regul-KL}.
\end{theorem}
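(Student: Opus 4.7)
The plan is to reduce the scaling iteration to an explicit map on the cone of positive functions, and show that this map is a strict contraction for the Thompson metric, so that linear convergence follows from the Banach fixed point theorem.

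First, I would compute the proximal operator explicitly. With $F_1 = \lambda_1 \KL(\cdot|p)$, pointwise optimality in the definition of $\prox^{\KL}_{F_1/\epsilon}$ gives $\prox^{\KL}_{F_1/\epsilon}(z) = p^{\alpha_1}\, z^{1-\alpha_1}$ with $\alpha_1 = \lambda_1/(\lambda_1+\epsilon) \in (0,1)$ (and analogously $\alpha_2$ for $F_2$). Substituting into \eqref{eq-scaling-iterates}, the iteration becomes
\[
\iiter{a} = \left(\frac{p}{\Gibbs \iter{b}}\right)^{\alpha_1}, \qquad
\iiter{b} = \left(\frac{q}{\transp{\Gibbs}\iiter{a}}\right)^{\alpha_2}.
\]
So one full step is the map $T\colon b \mapsto (q/\transp{\Gibbs}((p/\Gibbs b)^{\alpha_1}))^{\alpha_2}$.

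Next, I would use three basic Thompson-metric estimates: (i) $z\mapsto z^{\beta}$ is $\beta$-homogeneous and order-preserving for $\beta>0$, hence $d(r^\beta,s^\beta)\le \beta\, d(r,s)$; (ii) for any fixed positive $p$ with $\log p\in\Linf$, $z\mapsto p/z$ is $(-1)$-homogeneous and order-reversing, hence an isometry; (iii) the linear operator $\Gibbs$ is $1$-homogeneous and order-preserving, hence non-expansive. Composing these estimates along the definition of $T$ yields
\[
d(T b_1, T b_2) \le \alpha_2 \, d(\transp{\Gibbs}(\cdots),\transp{\Gibbs}(\cdots)) \le \alpha_2 \alpha_1\, d(\Gibbs b_1, \Gibbs b_2) \le \alpha_1\alpha_2\, d(b_1,b_2),
\]
i.e.\ $T$ is a contraction with factor $\alpha_1\alpha_2 < 1$.

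To apply Banach, I need to work on the complete metric space $E = \{b\in\Linf_+(Y) : \log b \in \Linf(Y)\}$ (with Thompson metric), and check it is stable under $T$. Since $K$ is bounded between two positive constants and $\d x$, $\d y$ are probability measures, one has $0 < \inf K \cdot \|b\|_\infty^{-1} \le \Gibbs b \le \|K\|_\infty \|b\|_\infty$ when $b\in E$, so $\log \Gibbs b\in\Linf(X)$; combined with $\log p\in\Linf(X)$ and closure of $E$ under nonnegative powers, one gets $Tb \in E$. The same argument shows $b^{(0)}=1\in E$ and that each $\iter{a},\iter{b}$ stays in $E$. Banach's theorem then provides a unique fixed point $(a^\star, b^\star)\in E\times E$ of \eqref{eq-scaling-iterates} and linear convergence $d(\iter{b}, b^\star) \le (\alpha_1\alpha_2)^\ell\, d(b^{(0)},b^\star)$, and similarly for $\iter{a}$. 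By the triangle inequality applied to the multiplicative structure $a(x)K(x,y)b(y)$, this transfers to linear convergence of the coupling densities in the Thompson metric. Finally, Proposition \ref{prop_fixpoint} identifies the fixed point $(a^\star, b^\star)$ with the solution of \eqref{eq-general-regul-KL}.

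The main obstacle is the stability of $E$ under the iteration, which is the reason for the hypothesis $\inf K>0$: without it, $\Gibbs b$ could fail to be bounded away from zero, leaving the complete part of Thompson's metric and breaking the Banach argument. The explicit power-map form of the proximal step, specific to the $\KL$ penalty, is what turns the non-expansiveness of standard Sinkhorn into a strict contraction, explaining why the argument is tailored to this case rather than to general $F_1,F_2$.
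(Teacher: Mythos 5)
Your proposal is correct and follows essentially the same route as the paper: the explicit power form of the $\KL$ proximal step turns the iteration into $\iiter{a}=(p/\Gibbs\iter{b})^{\alpha_1}$, $\iiter{b}=(q/\transp{\Gibbs}\iiter{a})^{\alpha_2}$, and the homogeneity/monotonicity properties of the Thompson metric give a contraction of factor $\alpha_1\alpha_2<1$ on the complete part containing $b^{(0)}=1$, after which Proposition \ref{prop_fixpoint} identifies the limit (the paper phrases this via consecutive iterates forming a Cauchy sequence rather than via Banach on the map $T$, which is the same argument). Only cosmetic remark: your lower bound $\Gibbs b\geq \inf K\cdot\|b\|_\infty^{-1}$ should read $\Gibbs b\geq \inf K\cdot\operatorname{ess\,inf}b$ (using that $\d y$ is a probability measure), which is what actually gives $\log\Gibbs b\in\Linf(X)$ for $b$ with $\log b\in\Linf(Y)$.
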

%
%\begin{remark}
%On Figure \ref{fig_fdiv} (left), one can interpret the fact that the iteration is contractant for the Thompson metric by the fact that the slope is everywhere strictly smaller than $1$ for the plain lines. Note that, as the graph suggests, it is possible to show that the iterations are in general non-expansive for the Thomson metric.
%\end{remark}
%
\begin{proof}
Let $z_i \eqdef \lambda_i/(\lambda_i + \epsilon) \in \, ]0,1[$ for $i\in\{1,2\}$ and let $b^{(0)}= 1$. Following a simple application of Proposition \ref{prop_proxKLcont} (or see Table
\ref{table_proximalexplicit}) the iterates in this specific case read 
\eq{
\iiter{a} = \left( \frac{p}{\Gibbs \iter{b}} \right)^{z_1} \qandq \iiter{b} = \left( \frac{q}{\Gibbs \iiter{a}} \right)^{z_2}\, .
}
Our assumptions are such that $d(b^{(1)},b^{(0)})$ and $d(a^{(1)},a^{(0)})$ are finite (this is direct since the logarithms of $b^{(0)}$, $K$, $p$ and $q$ are bounded).
Using the properties of the Thompson metric, it holds
\begin{eqnarray*}
d( \iiter{a},\iter{a}) 
= z_1. d(\Gibbs \iter{b}, \Gibbs \ter{b} ) 
 \leq z_1. d(\iter{b},\ter{b}) \, ,
\end{eqnarray*}
since $\Gibbs$ is an order preserving linear operator. In a similar fashion, we obtain $d(\iiter{b},\iter{b}) \leq z_2. d(\iter{a},\iter{a})$. Thus the application $\iter{a} \mapsto \iiter{a}$ is contractant of factor $z_1.z_2 <1$ and $(\iter{a}, \iter{b})_{\ell}$ converges linearly to some $(a^\star,b^\star) \in \Linf_+(X) \times \Linf_+(Y)$ which is a fixed point of the iterations.
Moreover, $|\log a^\star|$ is bounded because $d(a^{(0)},a^\star)<\infty$, and so is $|\log b^\star|$.
The result follows then by Proposition \ref{prop_fixpoint}.
\end{proof}

% !TEX root = ../EntropicNumeric.tex 
\section{Algorithm for Discrete Measures}
\label{sec-algorithm}

In this section, we take a closer look at the scaling iterations~\eqref{eq-scaling-iterates} in the specific case of discrete finite spaces. We give a convergence result, we adapt the notation to obtain an algorithm that can be implemented in a straightforward manner and also discuss how to numerically stabilize the algorithm for small values of the regularization parameter $\epsilon$, to reach a higher precision.

\subsection{Convergence}
In finite dimension\footnote{when $X$ and $Y$ are finite sets, or when the reference measures are finite sums of atoms.}, the scaling iterations~\eqref{eq-scaling-iterates} converge in general. Note that the given convergence rate is pessimistic compared to that observed in practice (which is linear, see Figure \ref{fig_convergenceUOT}).

%%%%%%%%%%
\begin{theorem}
\label{prop_convergencediscrete}
Assume that $X$ and $Y$ are finite sets and that $K$ takes positive values. Denote by $(\iter a,\iter b)$ the scaling iterates~\eqref{eq-scaling-iterates}, by $(\iter u, \iter v)=(\epsilon \log \iter a, \epsilon \log \iter b)$ the associated dual iterates and by $-I$ the dual functional~\eqref{eq-dual-pbm}. Then one has
\[
\epsilon \KL(r^*|\iter r) \leq I(\iter u, \iter v) - \inf I \leq O(1/\ell)
\]
where $\iter r (x,y) = \iter a (x) K(x,y) \iter b (y)$.
\end{theorem}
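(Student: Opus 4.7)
My plan is to split the statement into its two inequalities and attack them with completely different tools: the first is a structural primal--dual identity, while the second is a generic convergence rate for block--coordinate descent applied to the dual.

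For the first inequality, I would exploit the fact that every dual point $(\iter u,\iter v)$ naturally produces a primal candidate $\iter r(x,y)=e^{\iter u(x)/\epsilon}K(x,y)e^{\iter v(y)/\epsilon}=\iter a(x)K(x,y)\iter b(y)$, which satisfies the pointwise relation $\epsilon\log(\iter r/K)=\iter u\oplus\iter v$. A direct computation from \eqref{eq-defn-kl} yields the three--point identity
\begin{equation*}
\epsilon\,\KL(r^*|K)=\epsilon\,\KL(\iter r|K)+\epsilon\,\KL(r^*|\iter r)+\langle \iter u\oplus \iter v,\;r^*-\iter r\rangle.
\end{equation*}
Plugging this into $P(r^*)=F_1(P^X_\# r^*)+F_2(P^Y_\# r^*)+\epsilon\KL(r^*|K)$, applying Fenchel--Young to each marginal term via $F_i(P^i_\# r^*)+F_i^*(-\cdot)\geq\langle -\cdot,P^i_\# r^*\rangle$, and using the bookkeeping identity $\langle \iter u\oplus\iter v,\iter r\rangle-\epsilon\KL(\iter r|K)=\epsilon\langle e^{(\iter u\oplus\iter v)/\epsilon}-1,K\rangle$ (which follows from the definition of $\KL$), I will obtain $P(r^*)\geq -I(\iter u,\iter v)+\epsilon\KL(r^*|\iter r)$. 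Strong duality (Theorem \ref{prop_duality}) then gives $\inf I=-P(r^*)$, and rearranging produces the first inequality.

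For the second inequality, I would invoke the general theory of block--coordinate descent. By Theorem \ref{thm-sinkhorn-div} the scaling updates \eqref{eq-scaling-iterates} are exactly alternating exact minimization of the convex dual objective
\begin{equation*}
I(u,v)=F_1^*(-u)+F_2^*(-v)+h(u,v),\qquad h(u,v)\eqdef \epsilon\langle e^{(u\oplus v)/\epsilon}-1,K\rangle,
\end{equation*}
in which the nonsmooth part is block--separable and the coupling term $h$ is $C^\infty$ and convex. In the discrete setting with $K$ strictly positive and bounded, $h$ has a locally Lipschitz gradient in each block. Provided the iterates remain in a bounded region (which is where the real work lies), the $O(1/\ell)$ dual--value decay follows from standard block--minimization analyses of Beck--Tetruashvili type for composite convex objectives with exact block updates.

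The main obstacle I expect is the boundedness of the iterates needed to apply the off--the--shelf $O(1/\ell)$ rate. The dual is only well--posed up to the invariances of the problem (e.g.\ the Sinkhorn rescaling ambiguity $u\mapsto u+\lambda$, $v\mapsto v-\lambda$ in the balanced case), so one must argue that, modulo such invariances, the iterates are confined to a compact set after the first sweep. The route I would take is to use the explicit form of the $\prox^{\KL}$ updates together with positivity of $K$ and finite dimension to get two--sided pointwise bounds on $\iter a$ and $\iter b$ after one iteration, from which uniform Lipschitz continuity of $\nabla h$ on the trajectory follows, making the standard alternating--minimization rate applicable.
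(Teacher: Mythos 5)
Your overall strategy coincides with the paper's: the first inequality is the standard ``duality gap dominates the Bregman distance to the primal optimum'' estimate, and the second is the $O(1/\ell)$ rate for exact alternating minimization of a composite convex dual. For the first inequality your derivation is a valid variant of the paper's: you expand $\KL(r^*|K)$ around $\iter r$ via the three-point identity and apply Fenchel--Young at the primal optimizer $r^*$, whereas the paper writes the same cancellation as the Bregman divergence of $G^*(z)=\langle e^z-1,K\rangle$ combined with the subgradient inequality for $F^*$ at the \emph{dual} optimizer $(u^*,v^*)$. The two computations are equivalent; yours has the mild advantage of not invoking the dual optimizer at this stage (strong duality from Theorem~\ref{prop_duality} suffices), though attainment of $\inf I$ is needed anyway for the second half.

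For the second inequality there is a gap in how you plan to verify the hypotheses of the Beck-type rate, and the route you sketch is both harder than necessary and not obviously salvageable. You propose to establish \emph{two-sided} pointwise bounds on $\iter a$ and $\iter b$ after one sweep; as you yourself observe, this collides with the gauge invariance $u\mapsto u+\lambda$, $v\mapsto v-\lambda$, and it also fails in admissible degenerate cases (e.g.\ $F_1=\iota_{\{=\}}(\cdot|p)$ with some $p_i=0$ forces $\iter a_i=0$). The paper avoids this entirely: it only needs the smooth part $G^*$ to be Lipschitz on a sublevel set of $I$, and for that an \emph{upper} bound on the components of $u\oplus v$ suffices (since $\nabla G^*(z)=e^z\odot K$), which is gauge-invariant and is obtained directly from the Fenchel--Young estimate $\epsilon G^*(A(u,v)/\epsilon)\leq I(u,v)+\langle (u,v)/\epsilon,s\rangle+F(s)$ for any fixed feasible $s$, together with the fact that the exponential in $G^*$ dominates the linear term in finite dimension. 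You also omit the second hypothesis of the cited rate, namely that $\inf I$ is attained; the paper gets this from positivity of $K$ by inverting the primal--dual relation $(u^*,v^*)\in A^{-1}(\epsilon\log r^*/K)$. Without these two verifications your appeal to the ``off-the-shelf'' rate does not go through.
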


\begin{proof}
By Proposition \ref{prop_alternating}, $(\iter u, \iter v)$ are the iterates of an alternate maximization on the dual problem \eqref{eq-dual-pbm}. Moreover (minus) the dual functional has the structure
\[
I(u,v) = F^*(-(u,v)/\epsilon) + \epsilon G^*(A(u,v)/\epsilon)
\]
where $G: r \mapsto \KL(r|K)$ so that $G^*(z)=\langle e^z-1, K \rangle$ is continuously differentiable,  $A$ is defined as $A(u,v)(x,y)=u(x)+v(y)$, and $F^*(u,v)=F^*_1(u)+F^*_2(v)$ is separable. Let us check that (i) $G^*$ is Lipschitz on the sublevel sets of $I$ and (ii) $I$ admits minimizers, so that~\cite[Thm. 3.9]{beck2015convergence} applies and proves $I(\iter u, \iter v) - \inf I \leq O(1/\ell)$.

In order to check (i),  take any feasible point $s$ for $F$. The H\"older inequality $F(s)+F^*(-(u,v)/\epsilon) \geq \langle s, -(u,v)/\epsilon \rangle$ gives
\[
 \epsilon G^*(A(u,v)/\epsilon) \leq I(u,v) +\langle (u,v)/\epsilon, s\rangle  + F(s).
\]
As a consequence, the components of $A(u,v)$ are upper bounded on the set of dual variables $(u,v)$ satisfying
$
I(u,v) \leq I(u^{(1)},v^{(0)})
$
and so $I$ is uniformly Lipschitz on this set.
In order to check (ii), remark that a primal minimizer $r^*$ exists and since $K$ takes positive values, one can build a minimizer by simply inverting the primal-dual relationship $(u^*,v^*) \in A^{-1} (\epsilon \log r^*/K)$.

It remains to prove the first part of the inequality of the Theorem. We follow the strategy developed in \cite[Thm. 1]{fadili2011total}. By direct computations (standard for Bregman divergences), one has for any pair $(z,z^*)$ in the domain of $G^*$, by defining  $r(z)(x,y)=K(x,y)\exp(z(x,y))$, 
\[
\epsilon (G^*(z) - G^*(z^*) - \langle \nabla G^*(z^*), z - z^* \rangle) = \epsilon \KL(r(z^*)| r(z))
\]
which is similar to a strict convexity estimate. Moreover, for any $m \in \partial F^*(-(u^*,v^*)/\epsilon)$, one has by definition of the subdifferential
\[
F^*(-(\iter u,\iter v)/\epsilon) - F^*(-(u^*,v^*)/\epsilon) \geq - \langle m, (\iter u-u^*,\iter v-v^*) \rangle /\epsilon\, . 
\]
By optimality, it holds $0\in \partial I(u^*,v^*)$ so one can chose $m = \epsilon A^*\nabla G^*(A(u^*,v^*)/\epsilon)$.
Then one poses $z = A(\iter u,\iter v)/\epsilon$ and $z^* = A(u^*,v^*)/\epsilon$ in the first inequality, and add it to the second to obtain the result.
\end{proof}
%%%%%%%%%%

%%%%%%%%%%%%%%%%%%%%%%%%%%%%%%%%%%%%%%%%%%%%%
\subsection{Numerical Algorithm for Discrete Measures}
We now adopt a more ``implementation oriented'' point of view, which leads to algorithms which are straightforward to implement. For the sake of clarity, we focus on the case of a single unknown coupling (i.e.\ $n=1$, the extension to $n>1$ merely requires to add an index).

Assume that $X = \{x_1,\ldots,x_I\}$ and $Y=\{y_1,\ldots,y_J\}$ are discrete finite spaces of cardinal $I$ and $J$ respectively, endowed with the reference probability measures $\d x$ and $\d y$ described by nonnegative vectors summing to one
\eq{
\mathbf{ \d x} \eqdef (\d x (\{ x_i\})_{i=1}^I\in \RR_+^I
\qandq
\mathbf{\d y} \eqdef (\d y (\{ y_j\})_{j=1}^J\in  \RR_+^J\, .
}
In this context, one identifies functions $a : X\to \RR\cup \{\infty\}$ or $b:Y\to \RR \cup \{\infty\}$ to vectors $\mathbf{a} =(a(x_i))_{i=1}^I$ and $\mathbf{b} =(b(y_j))_{j=1}^J$ and functions on the product space $r : X\times Y \to \RR\cup \{\infty\}$ to matrices $\mathbf{R} = (r(x_i,y_j))_{i,j}$ (denoted with capital letters for a clearer distinction between vectors and matrices). 
This notation allows for a simple expression for the projections \eqref{eq_ProjectionDens} through
\eq{
P^X_\# \mathbf{R} = \mathbf{R}\, \mathbf{\d y}
\qandq
P^Y_\# \mathbf{R} = \transp{\mathbf{R}}\mathbf{\d x}
}
where $\transp{(\cdot)}$ denotes matrix transposition and $\mathbf{R}\,\mathbf{\d y}$ denotes the standard matrix-vector product. Finally, the notation $\odot$ denotes the entrywise product and $\oslash$ the entrywise division with convention $0/0=0$.

Suppose we are given a cost function $c$ described by the $I\times J$ matrix $\mathbf{C}$ and two convex, lower semicontinuous and lower bounded functions $F_1$ and $F_2$ whose domain is nonempty and included in $\RR_+^I$ and $\RR_+^J$ respectively.
The Gibbs kernel \eqref{eq_GibbsKernel} is then given by $\mathbf{K} = (e^{-\mathbf{C}_{i,j}/\epsilon})_{i,j}$ and, by simply replacing continuous variables by their discrete counterparts, the main variational problem \eqref{eq-general-regul-KL} reads
\eql{
\label{eq_primal_discr}
\min_{\mathbf{R} \in \RR^{I\times J}} 
F_1(\mathbf{R}\, \mathbf{\d y})
+F_2( \transp{\mathbf{R}}\mathbf{\d x})
+ \epsilon \sum_{ij} \KLp(\mathbf{R}_{i,j}| \mathbf{K}_{i,j}),
}
where $\KLp$ is the pointwise Kullback-Leibler divergence (defined in \eqref{eq_KLpointwise}).
Applying the operators $\Gibbs$ and $\transp{\Gibbs}$ defined in \eqref{eq-gibbs-convolution} amounts to
\begin{align*}
	\Gibbs\,\mathbf{b} = \mathbf{K}(\mathbf{b}\odot \mathbf{\d y})
	\qandq 
	\transp{\Gibbs}\,\mathbf{a} = \transp{\mathbf{K}}(\mathbf{a}\odot \mathbf{\d x})\, .
\end{align*}
For computing the scaling iterates~\eqref{eq-scaling-iterates}, all the information we need about $F_1$ and $F_2$ can be condensed by the specification of functions $\proxdiv_{F_1} : \RR_+^I\times \RR_+ \to \RR_+^I$ and $\proxdiv_{F_2} : \RR_+^J\times \RR_+ \to \RR_+^J$ defined as
\eql{
\label{eq_proxdiv_a}
\proxdiv_{F_i} : (\mathbf{s},\epsilon) \mapsto \prox^{\KL}_{F_i/\epsilon}( \mathbf{s})\oslash  \mathbf{s}\, .
}
Given this function, which is easy to compute in many cases (see e.g.~Table \ref{table_proximalexplicit}), the scaling algorithm is straightforward to implement.

%
%In the following Section we discuss a scheme to numerically stabilize iterations, which becomes particularly relevant when working with small regularization parameters $\epsilon$.
%
\begin{algorithm}
\caption{Scaling algorithm}\label{algo_scaling_discrete}
\begin{algorithmic}[1]
\Function{ScalingAlgo}{$\proxdiv_{F_1},\proxdiv_{F_2},\mathbf{K},\mathbf{\d x}, \mathbf{\d y}, \epsilon$}
   \State $\mathbf{b} \gets \ones_J$
   \Repeat
	\State $\mathbf{a} \gets \proxdiv_{F_1} (\mathbf{K}(\mathbf{b} \odot \mathbf{\d y}), \epsilon)$
	\State $\mathbf{b} \gets \proxdiv_{F_2} (\transp{\mathbf{K}}(\mathbf{a} \odot \mathbf{\d x}),\epsilon)$
   \Until{stopping criterion}
   \State \textbf{return} $(\mathbf{a}_i \mathbf{K}_{ij} \mathbf{b}_j)_{ij}$\Comment{The primal optimizer}
\EndFunction
\end{algorithmic}
\end{algorithm}
By virtue of Theorem \ref{prop_convergencediscrete}, Algorithm \ref{algo_scaling_discrete} is guaranteed to stop and to return an approximate minimizer of~\eqref{eq_primal_discr} if one uses a consistant stopping criterion. %such as testing if $\Vert \log a^{(\ell+1)}-\log a^{(\ell)}\Vert_\infty$ is below a tolerance threshold.
%%%%%%%%%%%%%%%%%%%%%%%%%%%%%%%%%%%%%%%%%%%%%%%%%%%%%%%%%%%%
\subsection{Log-domain Stabilization}
\label{sec:LogDomain}

For small values of $\epsilon$ the entries of $\mathbf{K} = e^{-\mathbf{C}/\epsilon}$, $\iter{\mathbf{a}}= e^{\iter{\mathbf{u}}/\epsilon}$ and $\iter{\mathbf{b}}= e^{\iter{\mathbf{v}}/\epsilon}$ may become both very small and very large, leading to numerically imprecise values of their mutual products and overflow of the numerical range.
On the other hand, executing the algorithm in the $\log$ domain (i.e.\ storing the logarithms of the entries of $\mathbf{K}$, $\iter{\mathbf{a}}$ and $\iter{\mathbf{b}}$) is not completely satisfying because then the computation of $\Gibbs \iter{\mathbf{b}}$ and $\Gibbs \iter{\mathbf{a}}$  is not a simple matrix/vector product and the algorithm is considerably slowed down. We suggest a middle way, using a redundant parametrization of the iterates as follows:
\begin{align}
	\label{eq_stabilize_decompose}
	\iter{\mathbf{a}} & = \iter{\tilde{\mathbf{a}}} \odot \exp(\iter{\tilde{\mathbf{u}}}/\epsilon), &
	\iter{\mathbf{b}} & = \iter{\tilde{\mathbf{b}}} \odot \exp(\iter{\tilde{\mathbf{v}}}/\epsilon).
\end{align}
The idea is to keep $\iter{\tilde{\mathbf{a}}}$ and $\iter{\tilde{\mathbf{b}}}$ close to $1$ and to absorb the extreme values of $\iter{\mathbf{a}}$ and $\iter{\mathbf{b}}$ into the log-domain via $\iter{\tilde{\mathbf{u}}}$ and $\iter{\tilde{\mathbf{v}}}$ from time to time.

Consider the stabilized kernels $\iter{\tilde{\mathbf{K}}}$ whose entries are given by 
\eq{
\iter{\tilde{\mathbf{K}}}_{ij} = \exp ((\iter{\tilde{\mathbf{u}}}_i + \iter{\tilde{\mathbf{v}}}_j-\mathbf{C}_{i,j})/\epsilon)
}
and remark that it holds, after direct computations,
\eq{
\Gibbs \iter{\mathbf{b}} = e^{-\frac{\iter{\tilde{\mathbf{u}}}}{\epsilon}} \odot \iter{\tilde{\mathbf{K}}}(\iter{\tilde{\mathbf{b}}} \odot \mathbf{\d y})
\qandq
\transp{\Gibbs} \iter{\mathbf{a}} = e^{-\frac{\iter{\tilde{\mathbf{v}}}}{\epsilon}} \odot \transp{(\iter{\tilde{\mathbf{K}}})}(\iter{\tilde{\mathbf{a}}} \odot \mathbf{\d x})\, .
}
The scaling iterates~\eqref{eq-scaling-iterates} then read 
\begin{align*}%\label{eq-sinkhorn-like-iterates-stabilized}
	\iiter{\tilde{\mathbf{a}}} = 
			\prox^{\KL}_{\tfrac{1}{\epsilon}F_1}(e^{-\frac{\iter{\tilde{\mathbf{u}}}}{\epsilon}} \odot \mathbf{s}_1)  
		\oslash
			\mathbf{s}_1
		\, , \quad	
	\iiter{\tilde{\mathbf{b}}}= 
			\prox^{\KL}_{\tfrac{1}{\epsilon}F_2}(e^{-\frac{\iter{\tilde{\mathbf{v}}}}{\epsilon}} \odot \mathbf{s}_2)  
		\oslash
			\mathbf{s}_2	
\end{align*}
where $\mathbf{s}_1 = \iter{\tilde{\mathbf{K}}}(\iter{\tilde{\mathbf{b}}} \odot \mathbf{\d y})$ and $\mathbf{s}_2= \transp{(\iter{\tilde{\mathbf{K}}})}(\iter{\tilde{\mathbf{a}}} \odot \mathbf{\d x})$.

For computing these iterates, all the information we need about $F_1$ and $F_2$ can be condensed by the specification of ``stabilized $\proxdiv$'' functions $\proxdiv_{F_1} : \RR^I\times \RR^I \times \RR_+ \to \RR^I$ and $\proxdiv_{F_2} : \RR^J\times \RR^J \times \RR_+ \to \RR^J$ defined as
\eql{
\label{eq_proxdiv}
\proxdiv_{F_i} : (\mathbf{s},\mathbf{u},\epsilon) \mapsto \prox^{\KL}_{F_i/\epsilon}(e^{-\frac{\mathbf{u}}{\epsilon}} \odot \mathbf{s}) \oslash \mathbf{s}\, .
}
We adopt the same notation as in~\eqref{eq_proxdiv_a} because it is just the special case when $\mathbf{u}=0$.
%where division is performed entrywise with the convention $0/0=0$. The notation $\proxdiv$ is intended to recall that (in the absence of absorption, i.e.\ when $u=0$) $\proxdiv$ is the pointwise ratio of the proximal operator over the identity operator, as used in \eqref{eq-sinkhorn-like-iterates}. 
The main numerical algorithm thus obtained is displayed in Algorithm \ref{algo_scaling_stabilized}.

\paragraph{Computing $\proxdiv$.}
The issue of extreme numerical values is not completely remedied by the absorption steps in Algorithm \ref{algo_scaling_stabilized} since the $\sproxdiv$ operation still involves the potentially extreme factor $e^{-\mathbf{u}/\epsilon}$. In practice however, we find that for many problems $\sproxdiv$ can be computed without evaluating the exponential $e^{-\mathbf{u}/\epsilon}$ and the formula remains numerically stable in the limit of small $\epsilon$. Several examples for this are given in Section \ref{sec_applications}.

\paragraph{Frequency of absorptions.}
%Only applying stabilized iterations corresponds to the original algorithm.
%Alternating stabilized and absorption iterations corresponds to parametrizing $(\iter{a},\iter{b})$ exclusively in the log-domain via $(\iter{\tilde{u}},\iter{\tilde{v}})$, effectively fixing $(\iter{\tilde{a}},\iter{\tilde{b}})$ to 1. While this is numerically more stable than the original algorithm, it also entails some computational overhead, as recomputing the kernels entails many evaluations of the exponential function.
In practice, we recommend to run stabilized iterations and check for the extreme values of $(\iter{\tilde{\mathbf{a}}},\iter{\tilde{\mathbf{b}}})$ every couple of iterations. When they exceed a given threshold, an absorption step is performed. 
%This allows to combine the numerical robustness of the stabilized variant with the simple matrix multiplications of the original algorithm.
%
\begin{algorithm}
\caption{Scaling algorithm with stabilization}\label{algo_scaling_stabilized}
\begin{algorithmic}[1]
\Function{ScalingAlgo2}{$\sproxdiv_{F_1},\sproxdiv_{F_2},\mathbf{C},\mathbf{\d x},\mathbf{ \d y}, \epsilon$}
   \State $(\tilde{\mathbf{b}},\mathbf{u},\mathbf{v}) \gets (\ones_J,0_I,0_J)$
   \State $\tilde{\mathbf{K}}_{ij} \gets \exp(-\mathbf{C}_{ij}/\epsilon)$  \Comment{for all $i,j$.}
   \Repeat
   	%\Repeat \label{alg_line_scaling}
		\State $\tilde{\mathbf{a}} \gets \sproxdiv_{F_1}(\tilde{\mathbf{K}}(\tilde{\mathbf{b}} \odot \mathbf{\d y}),\mathbf{u},\epsilon)$
		\State  $\tilde{\mathbf{b}} \gets \sproxdiv_{F_2}(\transp{\tilde{\mathbf{K}}}(\tilde{\mathbf{a}} \odot \mathbf{\d x}),\mathbf{v},\epsilon)$
	%	\Until{(or convergence)}
		\If{a component of $|\log \tilde{\mathbf{a}}|$ or $|\log \tilde{\mathbf{b}}|$ is ``too big'' }
	\State $(\mathbf{u},\mathbf{v}) \gets (\mathbf{u}+\epsilon \log \tilde{\mathbf{a}}, \mathbf{v} + \epsilon \log \tilde{\mathbf{b}})$\label{alg_line_uv}
	%\State decrease $\epsilon$ \Comment{Optional}
	\State $\tilde{\mathbf{K}}_{ij} \gets \exp((\mathbf{u}_i + \mathbf{v}_j-\mathbf{C}_{i,j})/\epsilon)$ \Comment{for all $i,j$.}\label{alg_line_Kup}
	\State $\tilde{\mathbf{b}} \gets \ones_J$
	\EndIf
   \Until{stopping criterion}
   \State \textbf{return} $(\tilde{\mathbf{a}}_i \tilde{\mathbf{K}}_{i,j} \tilde{\mathbf{b}}_j)_{i,j}$\Comment{The primal optimizer}
\EndFunction
\end{algorithmic}
\end{algorithm}

%%%%%%%%%%%%%%%%%%%%%%%%%%%%%%%%%%%%%%%%%%%%%%%%
\subsection{Comments on Implementation}
\label{sec_implementation}

We wish to emphasize the simplicity of Algorithm \ref{algo_scaling_discrete} and Algorithm \ref{algo_scaling_stabilized}. 
When an optimization problem of the form \eqref{eq-general} is given one just has to (i) choose the reference measures (which also determines a discretization grid in practice) (ii) determine the functions $F_i$ by going to the space of densities and (iii) find a way to efficiently compute $\proxdiv_{F_i}$ or $\sproxdiv_{F_i}$ (in many cases, this operator has a closed form, or can be computed with a few parallelizable iterations). Let us briefly discuss some details on the practical implementation of Algorithm \ref{algo_scaling_stabilized}.

%\paragraph{Handling infinite values}
%Some

\paragraph{Computing the matrix multiplications.}
The size of the matrix $\mathbf{K}$ is $I\times J$ so the matrix-vector multiplication step or even merely storing $\mathbf{K}$ in memory can quickly become intractable as the sizes of $X$ and $Y$ increase.
For special problems it is possible to avoid dense matrix multiplication (and storage). For instance, when $X$, $Y$ are Cartesian grids in $\RR^d$ and $c(x,y) = |x-y|^2$ is the squared Euclidean distance, then $\mathbf{K}$ is the separable Gaussian kernel: multiplying by $\mathbf{K}$ can be done by successive ``1-D convolutions''. We use this trick in Section \ref{sec_applications}, when mentioned. For more general geometric surfaces, $\mathbf{K}$ can also be approximated by the heat kernel~\cite{2015-solomon-siggraph}.
These methods however cannot be combined with Algorithm \ref{algo_scaling_stabilized}, as the ``stabilized kernels'' $\tilde{\mathbf{K}}$ lose the particular structure.
%\todo{G: remove this.}
%Combination of scaling algorithms with adaptive sparse solvers (see e.g.\ \cite{SchmitzerShortCuts2015}), to allow efficient solution of more general problems, is a challenging topic for future research and beyond the scope of this article. \todo{Should we remove this comment?}
%
\paragraph{Gradually decreasing $\epsilon$.}
When solving for a very small $\epsilon$, most of the entries of $\mathbf{K}$ are below machine precision, so one need to first ``estimate'' the dual variables $(\mathbf{u},\mathbf{v})$ by performing several iterations with higher values of $\epsilon$. 
Reduction of $\epsilon$ should be performed between lines \ref{alg_line_uv} and \ref{alg_line_Kup} in Algorithm \ref{algo_scaling_stabilized}: after line  \ref{alg_line_uv}, $(\mathbf{u},\mathbf{v})$ are ``approximations'' of the dual variable of the unregularized problem so one can change $\epsilon$ and start solving for a different $\epsilon$ with $(\mathbf{u},\mathbf{v})$ as a starting point. We use this heuristic in Section \ref{sec_applications} (when mentioned) as follows: starting from $\epsilon=1$, after every $100$ iteration we perform an absorption step and divide $\epsilon$ by factor chosen so that the final value $\epsilon$ is reached after $10$ divisions. Then we run the standard Algorithm \ref{algo_scaling_stabilized} until the desired convergence criterion is met.

Note that gradually decreasing $\epsilon$ has also been proposed to asymptotically solve the unregularized OT problem~\cite{sharify2011solution} (but $\epsilon$ should be of the order $1/\log(\ell)$ which is too slow to be of practical interest) and, heuristically, to accelerate the convergence of Sinkhorn's iterations~\cite{YuilleInvisibleHand1994}, in the same fashion as for interior point methods, but theoretical justification is an open problem.

%Strict convexity in \eqref{eq-general-regul-KL} is introduced by the Kull\-back-Leib\-ler regularization (the unregularized problem \eqref{eq-general} is not necessarily strictly convex). As $\epsilon \to 0$, the strict convexity becomes ``weaker'', and convergence of the alternating optimization on the dual tends to become slower.
%
%Gradually decreasing $\epsilon$ during optimization, from an initial large value to the smaller final value, has been proposed in  for the standard Sinkhorn algorithm, to accelerate convergence.
%In practice we also observe this to be very efficient for the more general algorithms discussed in this article and moreover allows to avoid issues due to machine precision when using Algorithm \ref{algo_scaling_stabilized}. 

\paragraph{Multiple couplings.}
The general optimization problem in Section \ref{sec_algorithm_analysis} involved $n$ couplings, potentially $n>1$. For simplicity, throughout Section \ref{sec-algorithm} we focussed on the case $n=1$. However, the extension to $n>1$ is rather simple.
%All the comments in this Section and Algorithm \ref{algo_scaling_stabilized} extend in a straightforward manner to this case.
In particular, the variables $\mathbf{a},\mathbf{b},\mathbf{u},\mathbf{v}$ of the algorithm lie in $\RR^{n\times I}$ or $\RR^{n\times J}$, the kernel $\mathbf{K}$ is a $n$-family of $I\times J$ matrices, the entrywise operations (multiplication, division) are still performed entrywise and the matrix vector multiplications are performed ``coupling by coupling'', e.g.\ for $k\in \{1,\dots,n\}$ and $j\in \{1,\dots, J\}$:
\eq{
(\mathbf{K}\, \mathbf{b})_{k,i} = \sum_j \mathbf{K}_{k,i,j}\, \mathbf{b}_{k,j}\, .
}

\paragraph{More tricks in \cite{SchmitzerScaling2016}.} 
\label{sec:RelationSparseScaling}

An efficient numerical implementation of Algorithm \ref{algo_scaling_stabilized} is studied further in \cite{SchmitzerScaling2016}. In addition to the log-domain stabilization and gradually decreasing $\epsilon$,
it is proposed to approximate $\mathbf{K}$ by a sparse matrix, obtained by adaptive truncation.
Thus, one can avoid storing of and multiplication by the dense kernel matrix, while keeping the inflicted truncation error negligible. This is more flexible than for instance the Gaussian convolution trick and can easily be extended to more general cost functions. In addition, this can be directly combined with the log-domain stabilization and therefore allows to solve larger problems with small regularization parameter (and hence, with little entropic blur). We choose however not to use these additional tricks Section \ref{sec_applications} in order to display results which are easily reproducible.

%%%%%%%%%%%%%%%%%%%%%%%%%%%%%%%%%%%%%%%%%%%%%%%%%

%%%%%%%%%%%%%%%%%%%%%%%%%%%%%%%%%%%%%%%%%%%%%%

\subsection{Generalization: more spaces and pushforward operators}
Scaling algorithms similar to Algorithm~\eqref{eq-scaling-iterates} can be formulated for solving problems of more general form than \eqref{eq-general-regul-KL}. There can be more than $2$ functionals, more than $2$ spaces involved and the projection operators $P^X_\#$ and $P^Y_\#$ can be replaced by more general linear operators, such as pushforwards of functions $t$ which are not necessarily projections (i.e. not of the form $t(x,y)= x$). 
Several examples of such extensions can be found in \cite{2015-benamou-cisc} for the special case of classical optimal transport. Let us sketch this extension in the discrete setting and for the case of $n=1$ (one ``coupling'') so as to remain simple and to stick close to implementation concerns. For brevity, we limit ourselves to giving the ``scaling'' form of the alternate maximization on the dual and an example, without proof.

Let $(X^k,\mathbf{\d x}^k)_{k=1}^N$ and $(Z,\mathbf{\d z})$ be finite measured spaces of respective cardinalities $(I_k)_{k=1}^N$ and $L$ and let $(t^k:Z\to X_k)_{k=1}^N$ be surjective maps. The space $Z$ plays the role of $X\times Y$ in the previous discussions, but in this generalization the structure of a product space is lost. For conciseness, in the notations, the maps $(t^k)_k$ act on indices of points instead of points, with an obvious meaning. Given $k\in \{1,\dots,N\}$, $\mathbf{R}\in \RR^L$ and $\mathbf{u}\in \RR^{I_k}$ the pushforward operator $t^k_\#$ and its adjoint $(t^k_\#)^*$ read
\eq{
t^k_\# \mathbf{R} = \big(\sum_{l\in (t^k)^{-1}(i)} \mathbf{R}_l\cdot \mathbf{\d z}_l\, /\, \mathbf{\d x}^k_i \big)_{i=1}^{I_k}
\qandq
(t^k_\#)^* \mathbf{u} = \big(\mathbf{u}_{t^k(l)}\big)_{l=1}^L\, . 
}

Given a nonnegative vector $\mathbf{K}\in \RR^L$ and $N$ convex, proper, lower semicontinuous functions $F_k:\RR^{I_k}\to \RR\cup\{\infty\}$, the generalization of \eqref{eq_primal_discr} is (up to a constant)
\eq{\label{eq_pushforward_primal}
\min_{\mathbf{R}\in \RR^L} \sum_{k=1}^N F_k(t^k_\# \mathbf{R}) +\epsilon \sum_{l=1}^L \mathbf{R}_l \cdot \big(\log(\mathbf{R}_l \, /\, \mathbf{K}_l)-1\big)\cdot \mathbf{\d z}_l 
}
with the convention $0\log (0/0) = 0$, and the dual reads
\eql{\label{eq_pushforward_dual}
\sup_{\substack{(\mathbf{u}^k)_{k=1}^N \in \RR^{\sum_k I_k}}} -\sum_{k=1}^N F_k^*(-\mathbf{u}^k) -\epsilon \sum_{l=1}^L \exp(\tfrac1\epsilon\sum_{k=1}^N \mathbf{u}^k_{t^k(l)})\cdot \mathbf{K}_l\cdot \mathbf{\d z}_l .
}

In order to perform alternate maximization on the dual as before, one needs a ``disintegration'' relation, in the spirit of \eqref{eq_fubinigibbs}. To this end, we define, for $(\mathbf{a}^n)_{n=1}^N \in \RR_+^{\sum_n I_n}$  and $k\in \{1, \dots, N\}$ the operator $\Gibbs^k$ as
\eq{
\Gibbs^k ((\mathbf{a}^n)_{n\neq k}) \eqdef
\big(\sum_{l\in (t^k)^{-1}(i)} (\prod_{n\neq k}\mathbf{a}^n_{t^n(l)} )  \cdot  \mathbf{K}_l \cdot \mathbf{\d z}_l/\mathbf{\d x}_i ^k \big)_{i=1}^{I_k}
}
With those operators, the rightmost term of \eqref{eq_pushforward_dual} can be computed in a ``marginalized'' way, using the relation
\eq{
\langle \mathbf{a}^k\, , \, \Gibbs^k ((\mathbf{a}^n)_{n\neq k}) \rangle_{\mathbf{\d x}^k}
=
\sum_{l=1}^L (\prod_{n=1}^N \mathbf{a}^n_{t^n(l)} ) \cdot \mathbf{K}_l \cdot \mathbf{\d z}_l
}
valid for $k\in\{1,\dots,N\}$. The key feature for obtaining this relation is the fact that $((t^k)^{-1}(i))_{i=1}^{I_k}$ forms a partition of $Z$, and this explains why the scaling algorithm generalizes naturally to linear operators which are ``pushforward''. It is now simple, at least formally, to define the generalization of the scaling algorithm dispayed in Algorithm \ref{algo_scaling_generalized}, by writing the alternate optimization on the dual problem, and taking again the dual, in the spirit of Proposition \ref{prop_alternating}.
\begin{algorithm}
\caption{Generalized scaling algorithm}\label{algo_scaling_generalized}
\begin{algorithmic}[1]
\Function{GeneralScalingAlgo}{$(\proxdiv_{F_k},\Gibbs^k, t^k)_{k=1}^n, \mathbf{K},\epsilon$}
   \State $\mathbf{a}^k \gets \ones_{I_k}$ \Comment{for all $k=1,\dots,N$.}
   \Repeat
   \For{k=1,\dots,N}
		\State $\mathbf{a}^k \gets \proxdiv(\Gibbs^k((\mathbf{a}^n)_{n\neq k}), \epsilon)$
   \EndFor
   \Until{stopping criteron}%\label{euclidendwhile}
   \State \textbf{return} $\big( \mathbf{K}_l  \cdot \prod_{k=1}^N \mathbf{a}^k_{t^k(l)}\big)_{l=1}^L$\Comment{The primal optimizer}
\EndFunction
\end{algorithmic}
\end{algorithm}

%Given a fixed point $(\mathbf{a}^k)_{k=1}^3$, a minimizer for \eqref{eq_pushforward_primal} (under some conditions analogous to Proposition \ref{prop_fixpoint}) can be retrieved through
%\eq{
%\big( \mathbf{K}_l  \cdot \prod_{k=1}^N \mathbf{a}^k_{t^k(l)}\big)_{l=1}^L \, .
%}
%%%%%%%%%%%%%%%%%
As a simple illustration, consider, in the setting of equation \eqref{eq_primal_discr}, an extension where is added a function of the total mass
\eq{
\min_{\mathbf{R}\in\RR^{I\times J}} F_1(P^X_\# \mathbf{R}) + F_2(P^Y_\# \mathbf{R}) + F_3(m_\# \mathbf{R}) + \epsilon \KL(\mathbf{R}|\mathbf{K})\, .
}
where $m:(x,y) \to \{0\}$ ($m_\#$ returns the total mass) and $F_3:\RR\to \RR\cup \{\infty\}$ is a proper, lower semicontinuous and convex function.
Applying the reasoning above, and after some rearrangement (here $Z$ is still a product space and thus it is convenient to store $\mathbf{R}$ as a matrix), one obtains Algorithm \ref{algo_scaling_mass}. If $F_{3}$ is the indicator of equality with a positive real number, this algorithm solves the partial optimal transport problem \cite{caffarelli2010free,figalli2010optimal} but more general $F_{3}$ can be considered such as, for instance, a range constraint on the total mass.

\begin{algorithm}
\caption{Scaling algorithm with a function on the total mass}\label{algo_scaling_mass}
\begin{algorithmic}[1]
\Function{MassScalingAlgo}{$(\proxdiv_{F_k})_{k=1}^3,\mathbf{K},\mathbf{\d x}, \mathbf{\d y}, \epsilon$}\Comment{$\mathbf{K}$ is a matrix.}
   \State $\mathbf{b} \gets \ones_J$
   \State $z \gets 1$
   \Repeat
		\State $\mathbf{a} \gets \proxdiv_{F_1}(z\cdot \mathbf{K}(\mathbf{b} \odot \mathbf{\d y}))$
		\State  $\mathbf{b} \gets \proxdiv_{F_2}(z\cdot \transp{\mathbf{K}}(\mathbf{a} \odot \mathbf{\d x}))$
		\State  $z \gets \proxdiv_{F_3}(\transp{(\mathbf{a} \odot \mathbf{\d x})}\mathbf{K}(\mathbf{b} \odot \mathbf{\d y}))$
   \Until{convergence}%\label{euclidendwhile}
   \State \textbf{return} $z\cdot(\mathbf{a}_i \mathbf{K}_{i,j} \mathbf{b}_j)_{i,j}$\Comment{The primal optimizer}
\EndFunction
\end{algorithmic}
\end{algorithm}
% !TEX root = ../EntropicNumeric.tex 

\section{Applications}
\label{sec_applications}

Throughout this Section, we detail how to use the scaling iterations~\eqref{eq-scaling-iterates} for solving the problems discussed in Section \ref{sec-ot-like}. We first analyze the properties of the functionals on marginals $F_i$, then we derive the iterations in a continuous setting, and finally show numerical experiments. We extend the definition of the operator $\sproxdiv$ (defined in \eqref{eq_proxdiv} in the discrete setting) to the continuous setting as follows: for $s\in \Lun(X)$, $u \in \Linf(X)$ and $\epsilon>0$
\eql{
\label{eq_proxdivbis}
\sproxdiv_{F}(s,u,\epsilon) = \prox^{\KL}_{F/\epsilon}(s\, e^{-\frac{u}{\epsilon}} ) /s
}
with the convention $0/0=0$. Moreover, in order to avoid the heavy notation $\Divergm_\phi(a\d x|b \d x)$, we denote by straight letters the divergences between functions, i.e. for $a,b$ measurable nonnegative functions on $X$ and $\varphi$ a nonnegative entropy function (see Definition \ref{def_entropy}):
\eql{\label{eq_divergencefunctions}
 \Diverg_\phi(a|b) \eqdef \int_X \ol{\Diverg}_\phi(a(x)|b(x)) \d x
 \qwithq
 \ol{\Diverg}_\phi(a|b) = 
 \begin{cases}
 b\cdot \phi(a/b) & \text{if $b>0$}\\
 a\cdot \varphi'_\infty & \text{otherwise}\, .
 \end{cases}
}
with the convention $0\times \infty=0$. Some properties of these divergences between functions are studied in Appendix \ref{sec:ApxDivergences}.

All reported runtimes were obtained with an implementation in Julia, on a standard laptop with CPU clock rate $2.5$ GHz. %For the stopping criterion in the experiments, we simply chose a fixed number of iterates because depending on the cases, the primal and the dual iterates may not be feasible. %It is not clear how to chose a stopping criterion that can be shared by all algorithms (with various $\epsilon$, various $F_i$), so we use a fixed number of step as a stopping criterion.

%%%%%%%%%%%%%%%%%%%%%%%%%%%%%%%%%%%%%%%%%%%%%%%
\subsection{Balanced and Unbalanced Optimal Transport}
\label{sec_appli_UOT}

\paragraph{Derivation of the algorithm.}

The basic framework of classical and unbalanced optimal transport has been recalled in Sections \ref{subsec_balanced} and \ref{subsec_unbalanced}. Assume that we are given marginals $\mu\in \Mm_+(X)$, $\nu \in \Mm_+(\nu)$ and a cost function $c:X\times Y \to \RR \cup \{\infty\}$. By defining $p = \d \mu /\d x$ and $q=\d \nu /\d y$, the marginal functionals involved in the regularized problem are
\eql{\label{eq_UOT}
F_1(s_1) = \Diverg_{\phi_1}(s_1 | p )
\qandq
F_2(s_2) = \Diverg_{\phi_2}(s_2  | q )\, .
}
as in \eqref{eq_divergencefunctions} above. As shown in Appendix \ref{sec:ApxDivergences}, if $\phi_1$ and $\phi_2$ are a nonnegative entropy function (Definition \ref{def_entropy}) then $F_1$ and $F_2$ are admissible integral functionals (Definition \ref{def_integralfunctional}). In order to compute the associated $\proxdiv$ operator, let us apply Proposition \ref{prop_proxKLcont} in this precise case.
\begin{proposition}
\label{prop_UOTprox}
Let $\phi$ be a nonnegative entropy function and $(s,p)\in \Lun_+(X)^2$ such that $0\in \dom \phi$ or $s(x)=0 \Rightarrow p(x)=0$ a.e. Let $F(s)=\Diverg_{\phi}(s | p )$. Then $\prox^{\KL}_{F/\epsilon}(s)$ is not empty and is the singleton $s^\star$ satisfying for a.e. $x\in X$,
\eq{
\begin{cases}
0 = s^\star(x) & \text{if $s(x)=0$,}\\
0 = \epsilon \log(s^\star(x)/s(x)) + \phi'_\infty & \text{if $p(x)=0$ and $s(x)>0$,}\\
0 \in \epsilon \log(s^\star(x)/s(x)) + \partial \phi(s^\star(x)/p(x)) & \text{otherwise.}
\end{cases}
}
\end{proposition}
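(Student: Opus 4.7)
The plan is to invoke Proposition \ref{prop_proxKLcont} to reduce the proximal computation to a pointwise one-dimensional minimization, and then to identify the unique minimizer in each of the three regimes of the statement.

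First I would verify that $F=I_f$ with $f(x,z)=\ol{\Diverg}_\phi(z|p(x))$ is an admissible integral functional in the sense of Definition \ref{def_integralfunctional}. That $f$ is a nonnegative convex normal integrand with $\dom f(x,\cdot)\subset[0,\infty[$ follows from Appendix \ref{sec:ApxDivergences}. The only nontrivial point is feasibility, and this is precisely where the dichotomy in the hypothesis is used: if $0\in\dom\phi$, then $\tilde s=0$ yields $I_f(0)=\phi(0)\int_X p\,\d x<\infty$; otherwise, pick $t_0>0$ with $\phi(t_0)<\infty$ (which exists by Definition \ref{def_entropy}), and, since $s(x)=0\Rightarrow p(x)=0$ a.e., the choice $\tilde s = t_0\, p$ lies in $\Lun_+(X)$ and satisfies $I_f(\tilde s)=\phi(t_0)\int_X p\,\d x<\infty$.

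Admissibility in hand, Proposition \ref{prop_proxKLcont} yields that $(\prox^{\KL}_{F/\epsilon}(s))(x)$ equals, for a.e. $x$, the minimizer $s^\star(x)$ of $z\mapsto \tfrac{1}{\epsilon}\ol{\Diverg}_\phi(z|p(x)) + \ol{\KL}(z|s(x))$, unique by strict convexity of the KL term (and trivially when $s(x)=0$). I would then enumerate the three regimes. If $s(x)=0$, then $\ol{\KL}(z|0)$ is $+\infty$ for $z>0$ and zero at $z=0$, so $s^\star(x)=0$. If $p(x)=0$ and $s(x)>0$, then $\ol{\Diverg}_\phi(z|0)=z\,\phi'_\infty$ for $z\geq 0$, the objective is smooth and strictly convex on $]0,\infty[$, and zeroing its derivative yields $\epsilon\log(s^\star(x)/s(x))+\phi'_\infty=0$. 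If $p(x)>0$ and $s(x)>0$, the objective equals $p(x)\phi(z/p(x))/\epsilon+\ol{\KL}(z|s(x))$, and Fermat's rule combined with the chain rule for subgradients gives $0\in \epsilon\log(s^\star(x)/s(x))+\partial\phi(s^\star(x)/p(x))$.

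The main subtlety I expect lies in the last case when the minimizer may sit at the boundary $s^\star(x)=0$: the stated inclusion then has to be interpreted with the convention that $\log(0/s(x))=-\infty$ and that $\partial\phi(0)$ contains a suitable ray reaching $-\infty$, which is the standard convex-analytic convention for proper lower semicontinuous convex functions with domain in $[0,\infty[$. Measurability of the selection $x\mapsto s^\star(x)$ requires no extra work, being inherited from the measurable selection argument (via \cite[Thm.~14.37]{rockafellar2009variational}) already used in the proof of Proposition \ref{prop_proxKLcont}.
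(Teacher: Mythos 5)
Your proof is correct and follows exactly the route the paper takes: the paper's own proof is the single line ``it is the pointwise optimality conditions associated to Proposition \ref{prop_proxKLcont}'', and you have simply spelled out that reduction, the admissibility check (which is indeed where the hypothesis ``$0\in\dom\phi$ or $s(x)=0\Rightarrow p(x)=0$'' enters), and the case-by-case first-order conditions. The extra detail, including the remark on boundary minimizers in the third case, is consistent with the paper and adds nothing contradictory.
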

\begin{proof}
It is the pointwise optimality conditions associated to Proposition \ref{prop_proxKLcont}.
\end{proof}
This formula allows to compute explicitly the $\proxdiv$ operators of the examples introduced in Section \ref{sec_divergencefunc}, as listed in Table \ref{table_proximalexplicit}. These entropy functions as well as the associated $\proxdiv$ operators are displayed on Figure \ref{fig_fdiv}. Note that, in Table~\ref{table_proximalexplicit} the first line corresponds to standard Sinkhorn iterations and these iterations are recovered in the second and third line by letting $\la\to + \infty$ and by setting $\alpha=\beta=1$ in the fourth line.
In the context of the log-domain stabilization (Section \ref{sec:LogDomain}), all four $\proxdiv$ operators remain stable in the limit of small $\epsilon$: either $\proxdiv$ is independent of $u$, only a regularized exponential $e^{-u/(\lambda + \epsilon)}$ must be evaluated, or extreme values are cut off by thresholding.

\begin{table}[h] \centering
%\arraystretch
\begin{tabular}{@{}rcrcr@{}}\toprule
$F$
&& $\proxKL_{F/\epsilon}(s)$ &&$ \proxdiv_F(s,u,\epsilon) $\\ \midrule
$\iota_{\{=\}}(\cdot | p) $
&& $p$ 
&& $p/s$ 
\\ \addlinespace
$\lambda \KL(\cdot | p)  $
&& $s^{\frac{\epsilon}{\epsilon+\lambda}} \cdot p^{\frac{\lambda}{\epsilon+\lambda}}$
&& $\left( p/s \right)^{\frac{\lambda}{\lambda+\epsilon}} \cdot e^{-u/(\lambda+\epsilon)}$ \\ \addlinespace
$\lambda \TV(\cdot | p) $
&& $\min \left\{ s \cdot e^{\frac{\lambda}{\epsilon}}, \max \left\{ s \cdot e^{-\frac{\lambda}{\epsilon}}, p \right\} \right\}$
&& $\min \left\{ e^{\frac{\lambda-u}{\epsilon}}, \max \left\{ e^{-\frac{\lambda+u}{\epsilon}}, p/s \right\} \right\}$  
\\ \addlinespace
$\RG_{[\alpha,\beta]} $
&& $ \min \left\{ \beta \, p, \max \left\{ \alpha\, p, s \right\} \right\} $
&& $\min \left\{ {\beta \, p}/{s}, \max \left\{ {\alpha\, p}/{s}, e^{-u/\epsilon} \right\} \right\}$
\\ \addlinespace \bottomrule
\end{tabular}
\caption{Some divergence functionals and the associated $\prox$ and $\proxdiv$ operators for functions $s$ and $u$ defined on $X$. All operators are acting pointwise and $\lambda>0$, $0\leq\alpha\leq\beta$ are real parameters (see Section \ref{sec_divergencefunc} for the definitions).}
\label{table_proximalexplicit}
\end{table}
\begin{figure}
\centering
\begin{subfigure}{0.45\linewidth} 
\centering
 \resizebox{1.\linewidth}{!}{
\includegraphics[clip,trim=0cm 0cm 0cm 0cm]{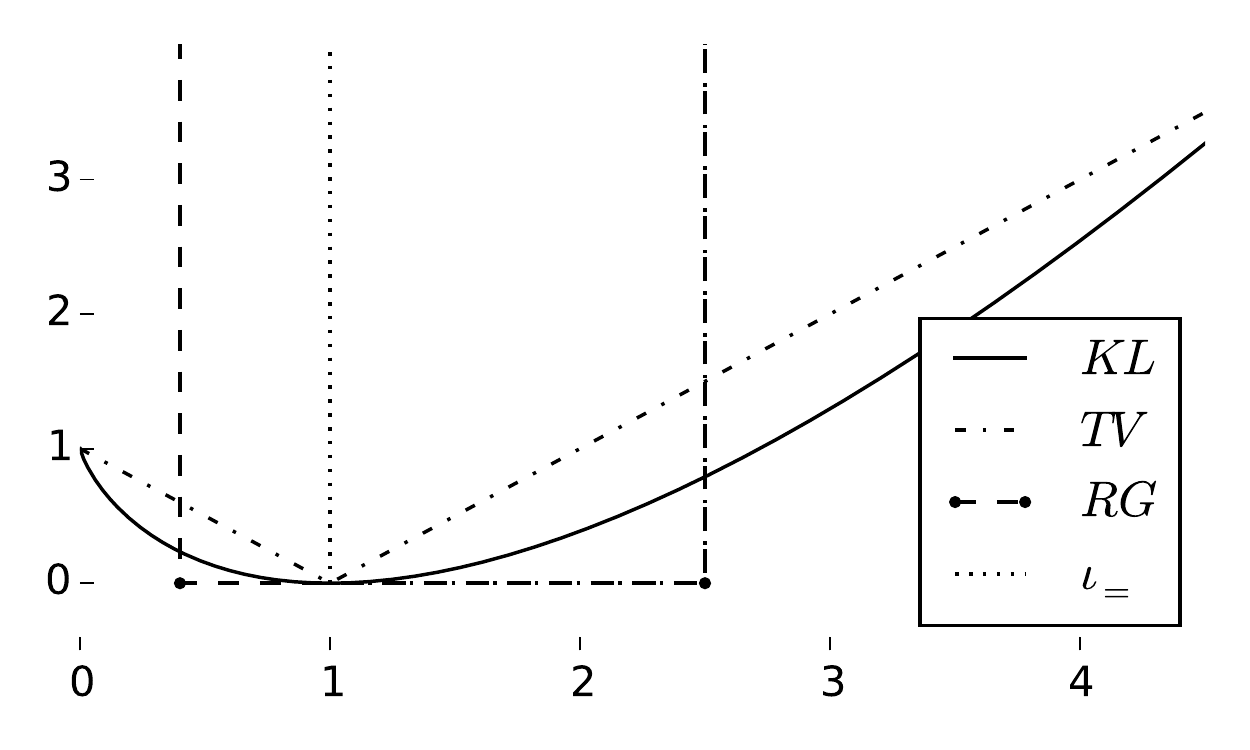}
}
\caption{Graph of the entropy functions $\varphi$ used to define the divergences in the legend.}
\end{subfigure}%
 \centering
 \qquad
\begin{subfigure}{0.45\linewidth} 
\centering
 \resizebox{1.\linewidth}{!}{
\includegraphics[clip,trim=0cm 0cm 0cm 0cm]{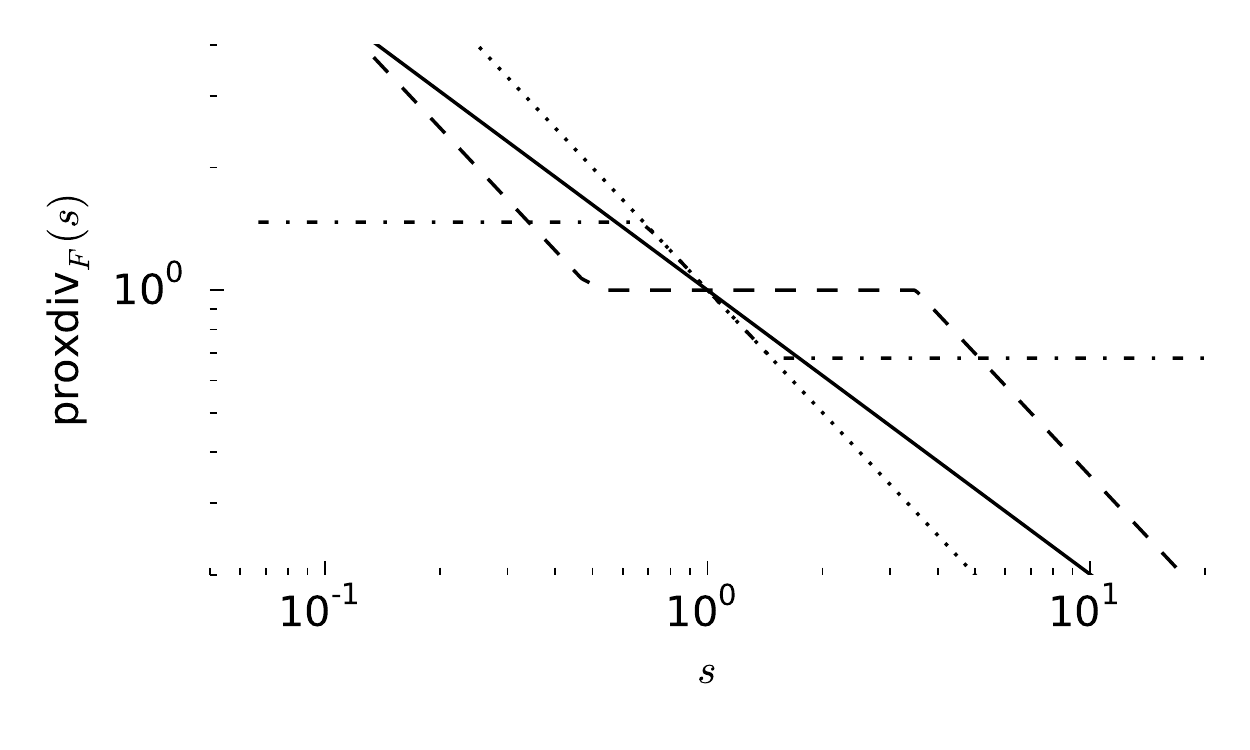}
}
\caption{Operator $\proxdiv_F$ associated to these divergence functions, for $p=1$, $u=0$ and varying $s\in \RR$ (in $\log$ scale)}
\end{subfigure}%
\caption{Divergences and $\proxdiv$ operators for the examples of Table \ref{table_proximalexplicit}}
\label{fig_fdiv}
\end{figure}

%%%%%%%%%%%%%%%%%%%%%%%

%\begin{remark}[Multiple Proximal Steps for Balanced Optimal Transport]
%	\label{rem:MultipleProxSteps}
%	In Section \ref{subsec:EntropicRegularization} the interpretation of the regularized problem as a proximal step w.r.t.\ the Kullback-Leibler divergence was discussed. For balanced optimal transport it turns out that $\ell$ steps with stepsize $\epsilon$ yield the same result as one step with stepsize $\epsilon/\ell$.
%	
%	For simplicity, let $X$ and $Y$ be finite spaces with fully supported probability measures $\mu$ and $\nu$, let $c$ be a finite cost function on $X \times Y$ and consider the functional $\mathcal{J}(\gamma) = \langle c, \gamma \rangle
%	 + \iota_{\{=\}}(P^X_\# \gamma | \mu) 
%	 + \iota_{\{=\}}(P^Y_\# \gamma | \nu)$.
%	 Choose a fully supported reference measure $\gamma_0 \in \Mm_+(X \times Y)$. Let $\epsilon_1$, $\epsilon_2 > 0$ be two regularization parameters and set $\gamma_{1} \eqdef \prox^{\KL}_{\mathcal{J}/{\epsilon_1}}(\gamma_0)$, $\gamma_{2} \eqdef \prox^{\KL}_{\mathcal{J}/{\epsilon_2}}(\gamma_1)$.
%	 Then we find that $\gamma_1$ can be written as $\gamma_1(x,y) = a(x)\,b(y)\,K_1(x,y)\,\gamma_0(x,y)$ with $K_1(x,y) = \exp(-c(x,y)/\epsilon_1)$, for two functions $a$ and $b$.
%	 The functional to determine $\gamma_2$ is consequently given by...
%\end{remark}

%%%%%%%%%%%%%%%%%%%%%%%%%%%%%%%%%%%%%%%%%%%%%%%%
%%%%%%%%%%%%%%%%%%%%%%%%%%%%%%%%%%%%%%%%%%%%%%%%

\paragraph{Numerical examples for $X=[0,1]$.}

Let $X=Y$ be the discretization of the interval $[0,1]$ into $I=J=1000$ uniform samples and $\mathbf{\d x} = \mathbf{\d y} = \frac{1}{I} \ones_I$ (the discretized Lebesgue measure). Let $\mathbf{p}$, $\mathbf{q}$ be the (discrete) marginals displayed on Figures \ref{fig_marg1d_p}-\ref{fig_marg1d_q}. We solve the discrete entropic regularized problem \eqref{eq_primal_discr} using the stabilized scaling Algorithm \ref{algo_scaling_stabilized} for marginal functions $F_i$ of the form \eqref{eq_UOT} with several choices of divergences listed in Figure \ref{fig_1Dmarginals}. The algorithm was stopped after $10^3$ iterations with a runtime of approximately $30$ seconds. 

For the optimal solution $\mathbf{R}\in \RR^{I\times J}$, obtained after convergence, Figure \ref{fig_1Dmarginals} displays its projections $\mathbf{R}\, \mathbf{\d y}$ and $\transp{\mathbf{R}}\, \mathbf{\d x}$ on the domain $X$ and Figure \ref{fig_1Dplans} illustrates the entries of $\mathbf{R}$ which are greater than $10^{-10}$, which can be interpreted as the approximate support of the optimizers. Thanks to the stabilization of Algorithm \ref{algo_scaling_stabilized}, the parameter $\epsilon$ could be made extremely small, until the plan is quasi-deterministic. Here we set it to $\epsilon=10^{-7}$ for the support to be clearly visible on Figure \ref{fig_1Dplans}. Remark that for the $\TV$ case (orange support), the straigth segments correspond to a density of $1$ on the diagonal: this is the plan with minimal entropy among the (non-unique) optimal plans, in accordance with Proposition \ref{prop_converg_regul}.

Figure \ref{fig_convergenceUOT} displays the primal-dual gap $\eqref{eq-general-regul-KL}-\eqref{eq-dual-pbm}$ as a function of the iteration number $\ell$ when running Algorithm \ref{algo_scaling_discrete} for solving unbalanced optimal transport problems with $\epsilon=0.01$. More precisely, we display
\[
P_\epsilon(\iter r) - D_\epsilon(\iter u, \iter v)
\]
where $P_\epsilon$ and $D_\epsilon$ are the primal and dual functionals (where set constraints are replaced by an exponential function of the distance to the set) as in Theorem \ref{prop_duality} and $\iter r, \iter u, \iter v$ are the primal and dual iterates as in Theorem \ref{prop_convergencediscrete}.
The marginals are the same as shown on Figures \ref{fig_marg1d_p}-\ref{fig_marg1d_q} and the cost is quadratic. The discretization is $I=J=500$ except for the thin black line where $I=J=1000$. This plot leads to 3 observations which generalize the conclusion of Theorem \ref{th_convergenceKL}: (i) convergence in function values is empirically linear (ii) convergence is faster when the divergences are multiplied by a small weight (iii) convergence speed is insensitive to dimension of the problem. 
%
%The following list gives the parameters of the numerical experiments as well as the name they are given in the figures (for all of them $I_{f_1}=I_{f_2}=\Diverg_\phi$ with the given $\phi$):
%\begin{itemize}
%\item Optimal transport: $c(x,y)=|x-y|^2$ and $\varphi = \iota_{\{1\}}$;
%\item Optimal partial transport : $c(x,y)=|x-y|^2$ and $\varphi = 0.05 \times |x-1|$; 
%\item Gaussian-Hellinger-I: $c(x,y)=|x-y|^2$ and $\varphi = 0.5 \times\phi_{\KL}$;
%\item Gaussian Hellinger-II: $c(x,y)=|x-y|^2$ and $\varphi = 0.1 \times \phi_{\KL}$;
%\item Range optimal transport: $c(x,y)=|x-y|^2$ and $\varphi = 0.1 \times \iota_{0.7 \leq x \leq 1.2}$; 
%\item $\WassF$: cost as in \eqref{logcost} and $\varphi = \phi_{\KL}$;
%\end{itemize}
%
%%%%%%%%%%%%%%%%%%%%%%%%%%%%%%%%%%%%%%%%%%%%%%%%%%
\begin{figure}
 \centering
\begin{subfigure}{.5\linewidth}
\centering
 \resizebox{1.\linewidth}{!}{
\includegraphics[clip,trim=0cm 0cm 0cm 0cm]{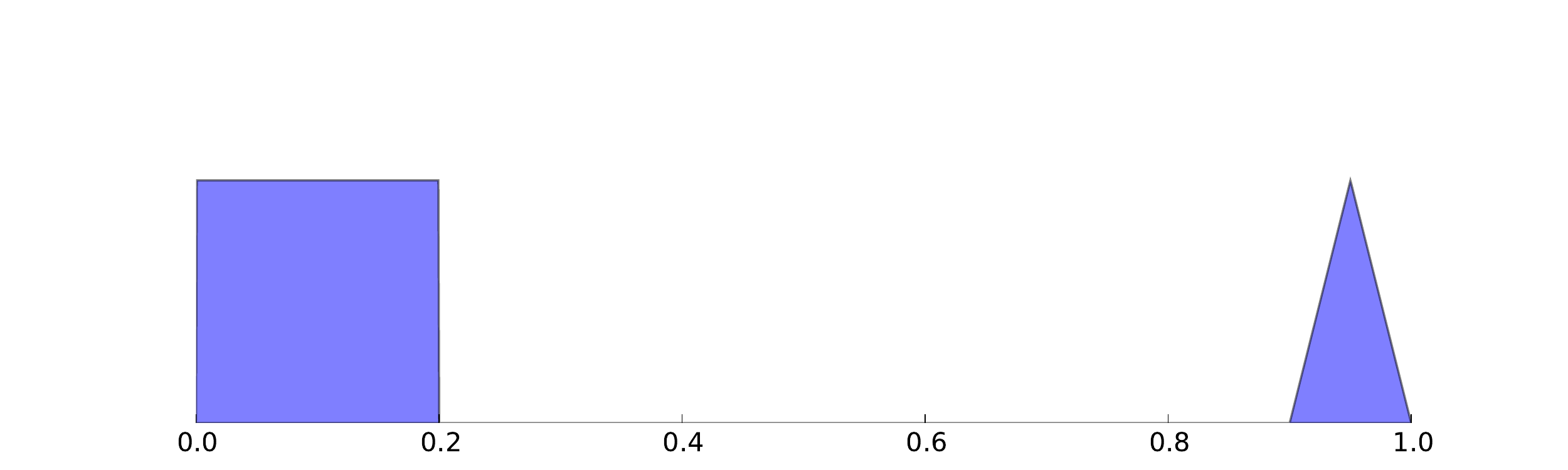}
}%
\caption{Marginal $p$}\label{fig_marg1d_p}
\end{subfigure}%
 \begin{subfigure}{.5\linewidth}
\centering
 \resizebox{1.\linewidth}{!}{
\includegraphics[clip,trim=0cm 0cm 0cm 0cm]{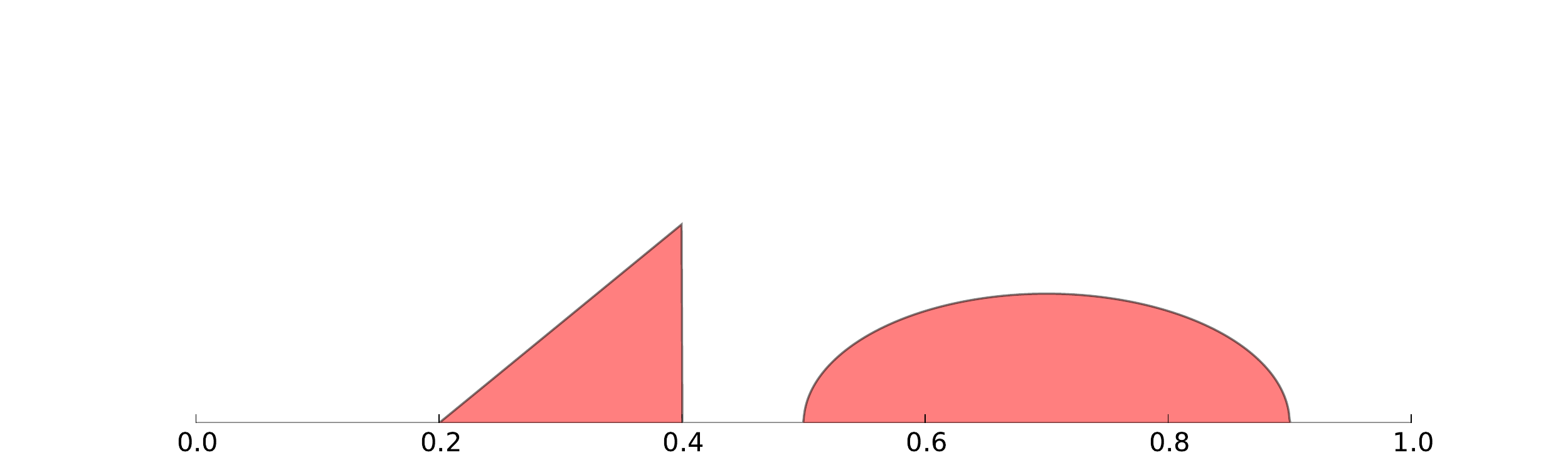}
}%
\caption{Marginal $q$}\label{fig_marg1d_q}
\end{subfigure}
\begin{subfigure}{0.5\linewidth} 
\centering
 \resizebox{1.\linewidth}{!}{
\includegraphics[clip,trim=0cm 0cm 0cm 0cm]{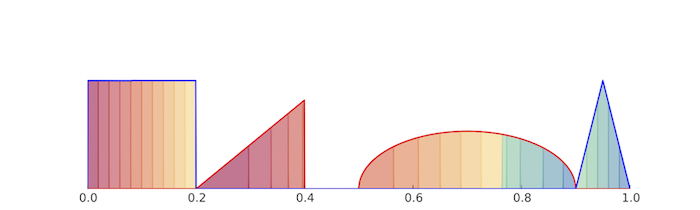}
}
\caption{$\iota_{\{=\}}$ (classical OT)}
\end{subfigure}%
\begin{subfigure}{0.5\linewidth} 
\centering
 \resizebox{1.\linewidth}{!}{
\includegraphics[clip,trim=0cm 0cm 0cm 0cm]{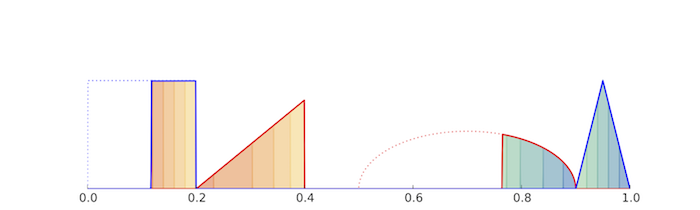}
}
\caption{$0.05\times \TV$}
\end{subfigure}
\begin{subfigure}{0.5\linewidth} 
\centering
 \resizebox{1.\linewidth}{!}{
\includegraphics[clip,trim=0cm 0cm 0cm 0cm]{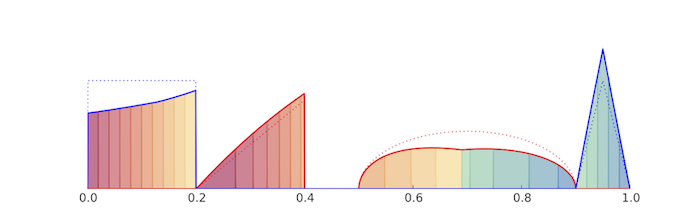}
}%
\caption{$0.5\times \KL$}
\end{subfigure}%
\begin{subfigure}{0.5\linewidth} 
\centering
 \resizebox{1.\linewidth}{!}{
\includegraphics[clip,trim=0cm 0cm 0cm 0cm]{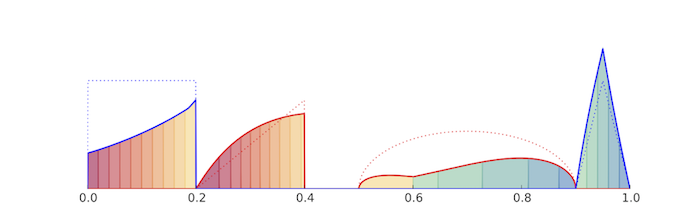}
}
\caption{$0.1\times \KL$}
\end{subfigure}
\begin{subfigure}{0.5\linewidth} 
\centering
 \resizebox{1.\linewidth}{!}{
\includegraphics[clip,trim=0cm 0cm 0cm 0cm]{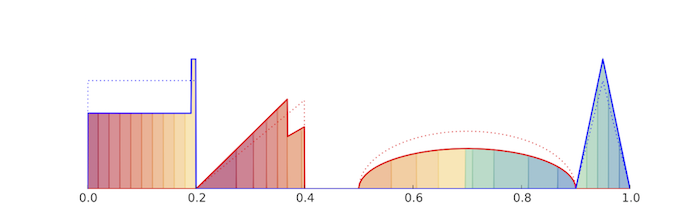}
}%
\caption{$\RG_{[0.7,\, 1.2]}$}
\end{subfigure}%
\begin{subfigure}{0.5\linewidth} 
\centering
 \resizebox{1.\linewidth}{!}{
\includegraphics[clip,trim=0cm 0cm 0cm 0cm]{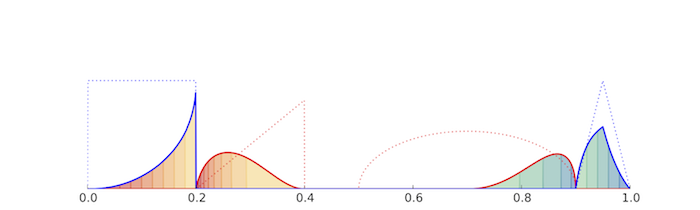}
}%
\caption{$\WassF$ (with cut locus at $0.2$)}
\end{subfigure}%
\caption{(A)-(B) Input marginals. (C)-(H) Marginals of the optimal plan $\mathbf{R}$ displayed together for several divergences (specified in the caption). The color shows the location of the same subset of mass before and after transportation. The cost is the quadratic cost $c(x,y)=|y-x|^2$ except for (H) which is a computation of the optimal plan for $\WF$ that is $F_1=F_2=\KL$ and $c$ as in \eqref{logcost} (with a spatial rescaling so that $c(x,y)=\infty \Leftrightarrow |y-x|\geq0.2$).}
\label{fig_1Dmarginals}
\end{figure}
%%%%%%%%%%%%%%%%%%%%%%%%%%%%%%%%%%%%%%%%%%%%%%%%%%
\begin{figure}
 \centering
 \resizebox{.40\linewidth}{!}{
\includegraphics[clip,trim=0cm 0cm 0cm 0cm]{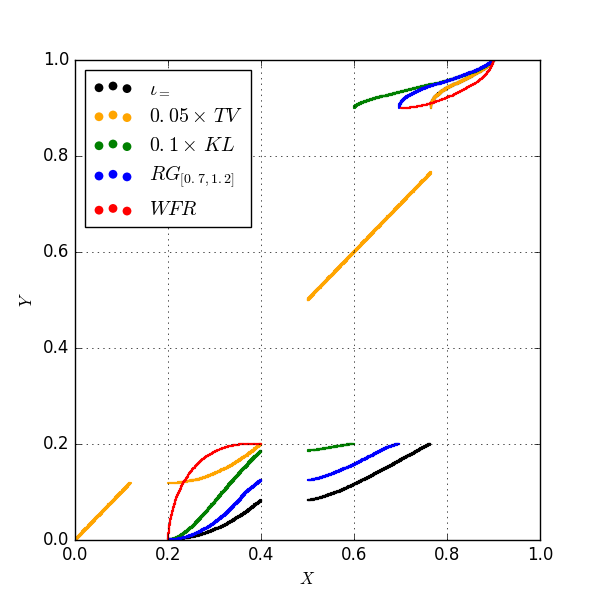}
}%
\caption{Support of the optimal plans $\enscond{(x_i,y_j)}{\mathbf{R}_{i,j}>10^{-10}}$ for all the examples displayed on Figure \ref{fig_1Dmarginals}, except (E). Orange and black are superimposed at the top.}
\label{fig_1Dplans}
\end{figure}
%%%%%%%%%%%%%%%%%%%%%%%%%%%%%%%%%%%%%%%%%%%%%%%%%
\begin{figure}
\centering
 \resizebox{.7\linewidth}{!}{
\includegraphics[clip,trim=0cm 0cm 0cm 0cm]{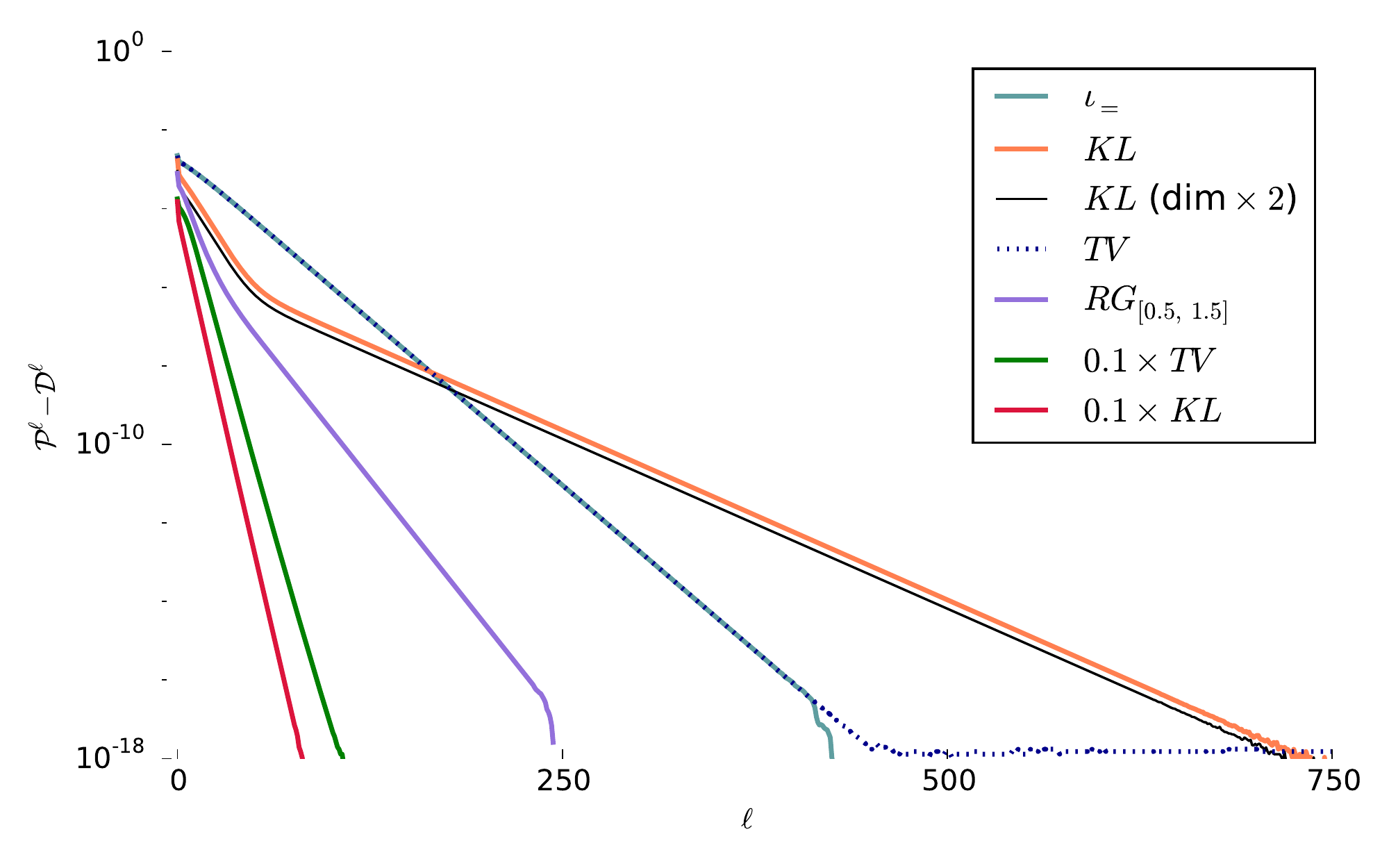}
}
\caption{Primal dual gap as a function of the iterations for Algorithm \ref{algo_scaling_discrete} applied to unbalanced optimal transport problems.}
\label{fig_convergenceUOT}
\end{figure}
%%%%%%%%%%%%%%%%%%%%%%%%%%%%%%%%%%%%%%%%%%%%%%%%%%
\paragraph{Numerical examples for $X=Y=[0,1]^2$.}

Let $X=Y$ be the discretization of the $2$-dimensional domain $[0,1]^2$ into $I=J=200\times 200$ uniform samples and $\mathbf{\d x} = \mathbf{\d y} $ be the discrete Lebesgue measures. Let $p$, $q$ be the densities displayed together on Figure \ref{fig_marg2d} and $c$ be the quadratic cost $c(x,y)=|y-x|^2$. We solve the discrete entropic regularized problem \eqref{eq_primal_discr} using Algorithm \ref{algo_scaling_discrete} for several unbalanced optimal transport problems. 
We apply the ``separable kernel'' method (see Section \ref{sec_implementation}) to accelerate the algorithm. Since this cannot be combined with the log-stabilization, the regularization parameter $\epsilon$ has been fixed to the rather high value $\epsilon=10^{-4}$ in order to avoid numerical issues.
Figure \ref{fig_2Dmarginals} shows the marginals of the optimal coupling $\mathbf{R}$ and Figure \ref{fig_2Dplans} illustrates the resulting transport plan: points with the same color correspond roughly to the same mass particle before and after transport. %(more precisely, the color represents, on the support of $p$ , the horizontal coordinate and, on the support of $q$, the horizontal coordinate of $T^{-1}(x,y)$  where $T$ is the transport map associated to $R$). \todo{B: to me the more precise explanation is more confusing than it helps, also it mentions a map $T$ which we don't introduce, etc. I would simply delete it.}
The algorithm was stopped after $10^3$ iterations and the running time was of $35$ seconds approximately.
%

%%%%%%%%%%%%%%%%%%%%%%%%%%%%%%%%%%%%%%%%%%%%%
\begin{figure}
 \centering
\begin{subfigure}{0.22\linewidth} 
\centering
 \resizebox{1.1\linewidth}{!}{
\includegraphics[clip]{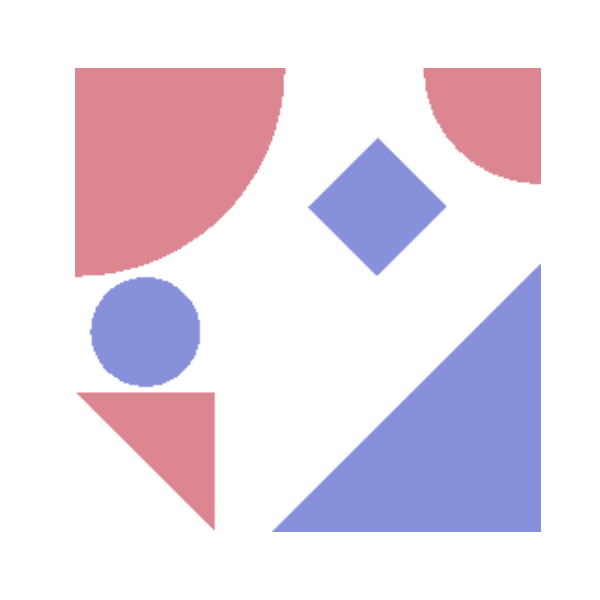}
}
\caption{$p$ and $q$}\label{fig_marg2d}
\end{subfigure}%
\begin{subfigure}{0.22\linewidth} 
\centering
 \resizebox{1.1\linewidth}{!}{
\includegraphics[clip,trim=0cm 0cm 0cm 0cm]{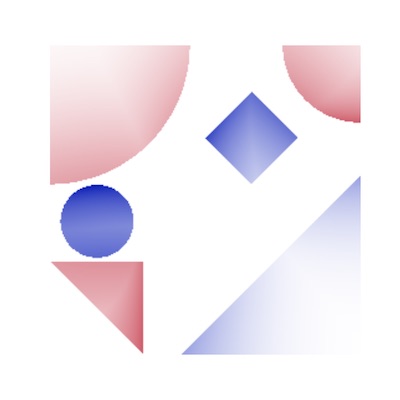}
}%
\caption{\small{$ 0.1\! \times\! \KL$}}
\end{subfigure}
\begin{subfigure}{0.22\linewidth} 
\centering
 \resizebox{1.1\linewidth}{!}{
\includegraphics[clip,trim=0cm 0cm 0cm 0cm]{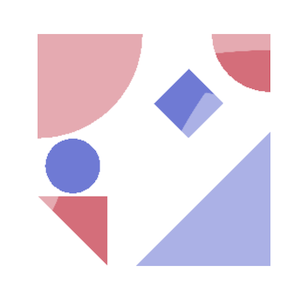}
}%
\caption{\small{$\RG_{[0.7,\, 1.2]}$}}
\end{subfigure}%
\begin{subfigure}{0.22\linewidth} 
\centering
 \resizebox{1.1\linewidth}{!}{
\includegraphics[trim=0cm 0cm 0cm 0cm]{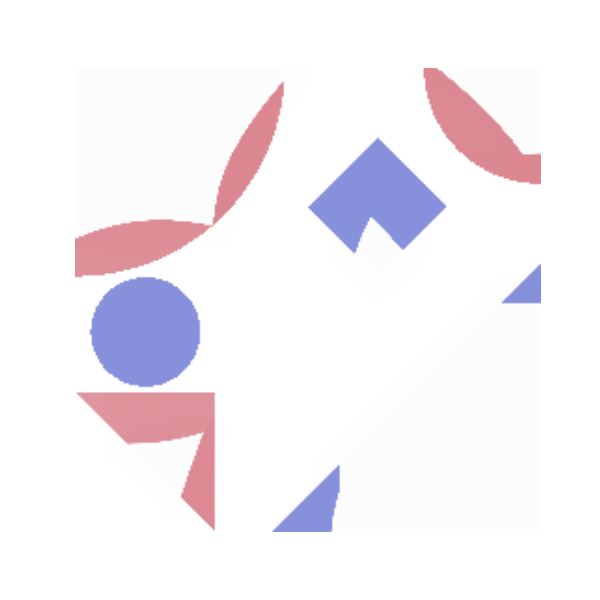}
}
\caption{\small{$0.05\times \TV$ }}
\end{subfigure}
\begin{subfigure}{0.1\linewidth} 
\centering
 \resizebox{2\linewidth}{!}{
\includegraphics[clip,trim=0cm 0cm 0cm 0cm]{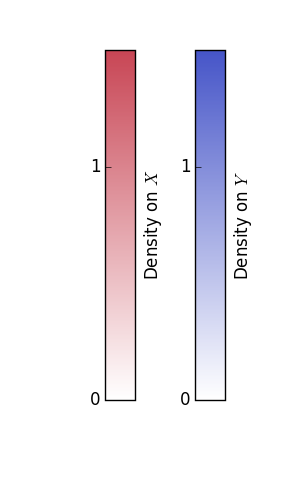}
}
%\caption{\small{$F=0.05\times \TV$ }}
\end{subfigure}
\caption{Marginals of the optimal plan $\mathbf{R}$ for several unbalanced optimal transport problems (quadratic cost, divergence specified in the captions).}
\label{fig_2Dmarginals}
\end{figure}
%%%%%%%%%%%%%%%%%%%%%%%%%%%%%%%%%%%%%%%%%%%%%%%%%
\begin{figure}
 \centering
\begin{subfigure}{0.24\linewidth} 
\centering
 \reflectbox{\rotatebox[origin=c]{180}{\resizebox{1.\linewidth}{!}{
\includegraphics[clip,trim=3cm 1cm 3cm 1cm]{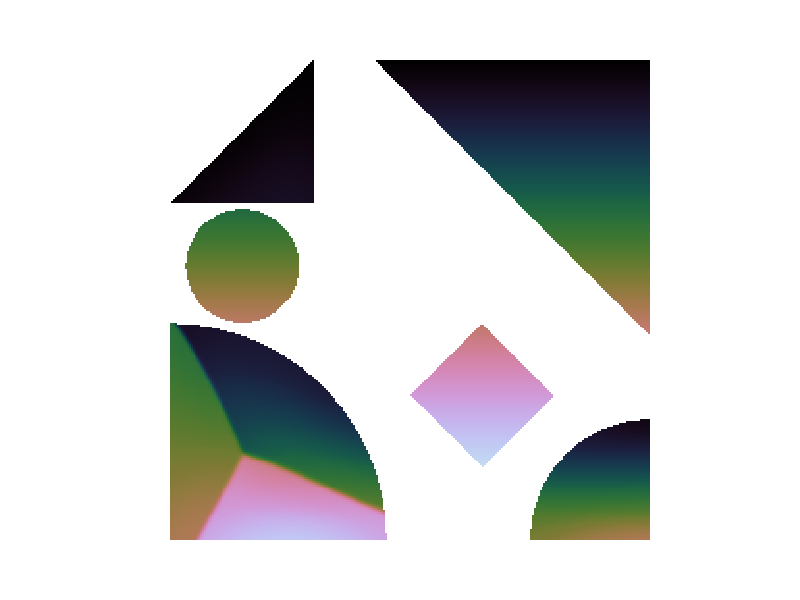}
}}}
\caption{$\iota_{\{=\}}$}
\end{subfigure}%
\begin{subfigure}{0.24\linewidth} 
\centering
 \reflectbox{\rotatebox[origin=c]{180}{\resizebox{1.\linewidth}{!}{
\includegraphics[clip,trim=3cm 1cm 3cm 1cm]{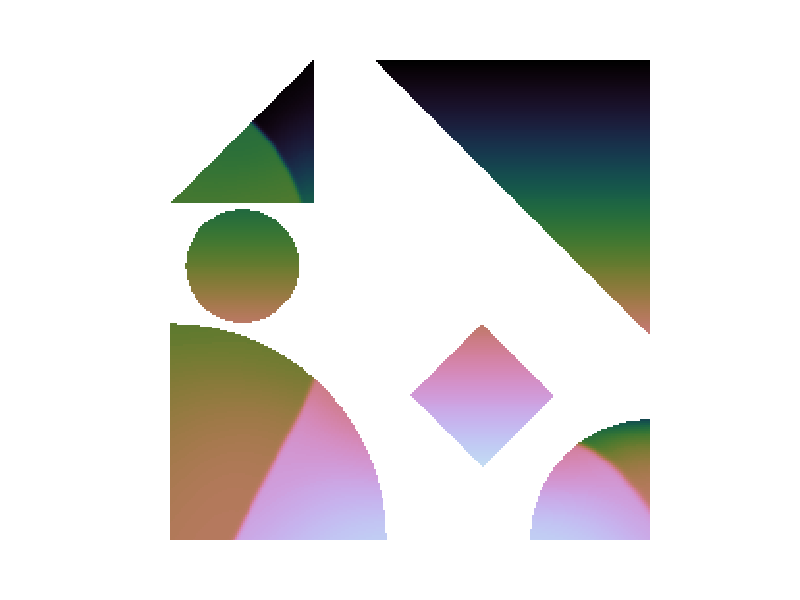}
}}}%
\caption{$0.1\times \KL$}
\end{subfigure}
\begin{subfigure}{0.24\linewidth} 
\centering
 \reflectbox{\rotatebox[origin=c]{180}{\resizebox{1.\linewidth}{!}{
\includegraphics[clip,trim=3cm 1cm 3cm 1cm]{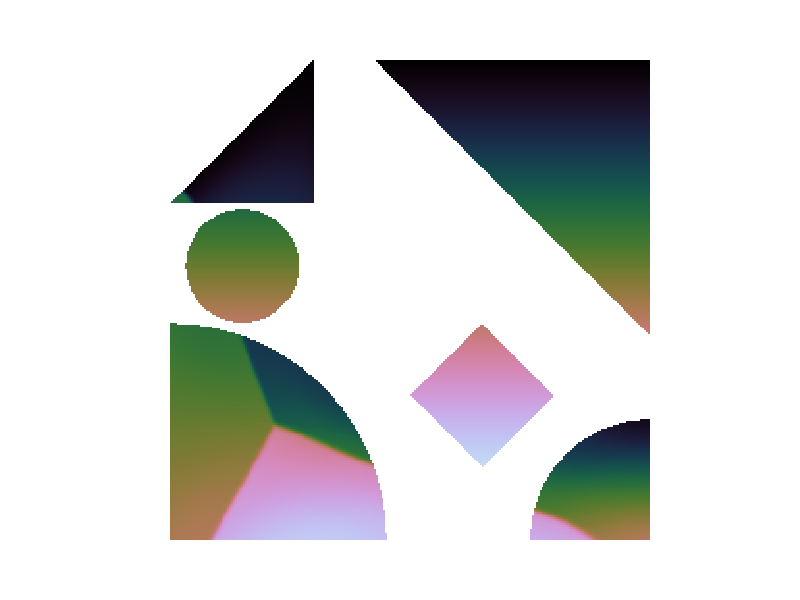}
}}}%
\caption{$\RG_{[0.7,\, 1.2]}$}
\end{subfigure}%
\begin{subfigure}{0.24\linewidth} 
\centering
 \reflectbox{\rotatebox[origin=c]{180}{\resizebox{1.\linewidth}{!}{
\includegraphics[clip,trim=3cm 1cm 3cm 1cm]{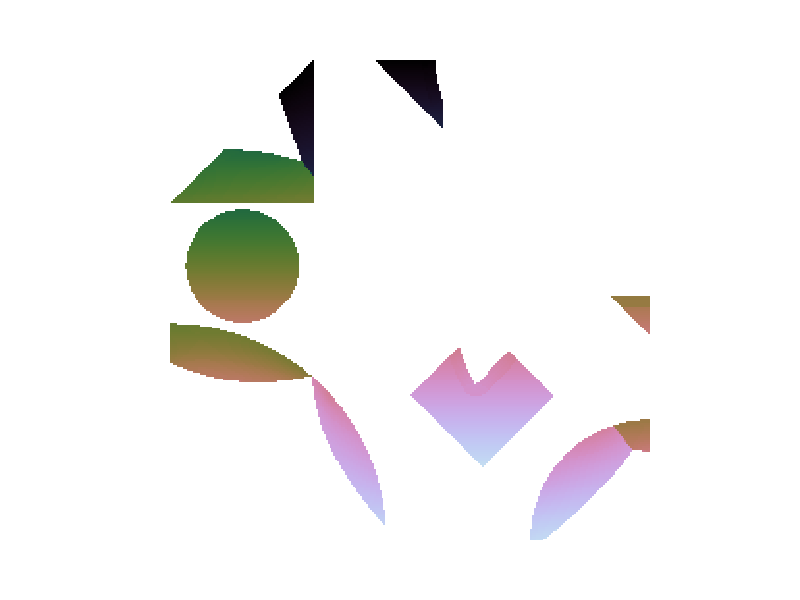}
}}}
\caption{$0.05\times \TV$}
\end{subfigure}
\caption{Representation of the transport map for the experiments of Figure \ref{fig_2Dmarginals}. %Particles which are associated by the transport map are given identical colors.
}
\label{fig_2Dplans}
\end{figure}

%%%%%%%%%%%%%%%%%%%%%%%%%%%%%%%%%%%%%%%%%%%%%%%%%%
\paragraph{Color transfer.}
In general it is difficult to display optimal transport maps for three-dimensional problems. An interesting application which allows intuitive visualization is color transfer: a classical task in image processing where the goal is to impose the color histogram of one image onto another image. 
Optimal transport between histograms has proven useful for problems of this sort such as contrast adjustment~\cite{delon2006movie} and color transfer via 1D transportation~\cite{pitie2007automated}. Indeed, optimal transport produces a correspondence between histograms which minimizes the total amount of color ``distortion'' (where the notion of distortion is specified by the cost function) and thus maintains maximal visual consistency. 
%
%Some work has been done to tune this approach for instance by processing independently the luminance and the chrominance data or by building a specific model allowing for mass variation. 
%Although there is no quantitative metric to compare different methods, the use of novel generic unbalanced optimal transport for tackling the transport allows to obtain state of the art quality results in challenging cases (see Figure \ref{fig_colortransfer}) with a generic algorithm.
%
%The experiment on shows that the good computational properties of the proposed algorithm allows to perform transfer directly on the CIE-Lab (3 dimensional) color space (instead of subspaces as all previous works) and the flexibility on the marginal constraints allows to obtain good results by just applying the algorithm as is.
%fs source, ft target

In our experiments we represent colors in the three-dimensional ``CIE-Lab'' space (one coordinate for luminance and two for chrominance), resized to fit into a cuboid  $X=Y=[0,1]^3$, discretized into $I=J=64 \times 32 \times 32$ uniform bins and we choose the quadratic cost $c(x,y) = |x-y|^2$. The anisotropic discretization of $X$ account for the fact that the eye is more sensitive to variations in luminance than variations in chrominance.

Let $\Omega \subset \RR^2$ be the image domain. An image is described by a function $g : \Omega \rightarrow X$ and its color histogram is the pushforward of the Lebesgue measure on $\Omega$ by $g$.
%Any color can be specified by three coordinates (e.g.\ $1$ for luminance and $2$ for chrominance in the ``CIE-Lab'' description). Thus an image can be described by a function $g$ which takes a point on the image plane $\Omega \subset \RR^d$ and returns a color in $\RR^3$. The color histogram is then the pushforward of the uniform measure on $\Omega$ by $g$.
%
%In our experiments, the domains $X=Y$ is the ``CIE-Lab'' color space, resized to fitted into a cuboid of dimension $[1.5 , 0.4, 0.4]$ discretized into $64\times32\times32$ uniform bins and $c$ is the quadratic cost $c(x,y)=|y-x|^2$ (the dimensions of $X$ account for the fact that the eye is more sensitive to variations in luminance than variations in chrominance).
Let $g_X : \Om \to X$ and $g_Y: \Om \to Y(=X)$ be two images and $p$, $q$ be the densities of the associated color histograms with respect to the reference measures $\mathbf{\d x} = \mathbf{\d y} = \ones_I$ which gives unit mass to each point of $X$. We run Algorithm \ref{algo_scaling_discrete} with the ``separable kernel'' method (see Section \ref{sec_implementation}) to obtain an (unbalanced) optimal transport plan $\mathbf{R}$.
An approximate transport map $T$ is then computed according to the barycentric projection (as already used for instance in~\cite{2015-solomon-siggraph})
\eq{
T(x_i)=\mathbf{T}_i \qwhereq
\mathbf{T} \eqdef (\mathbf{R}. \mathbf{y}^1,\mathbf{R}.\mathbf{y}^2,\mathbf{R}. \mathbf{y}^3)/\mathbf{R}. \ones_J
%T : (x^1,x^2,x^3) \mapsto \left(\frac{R(y\odot \d y)}{R.\d y}\right)_i
} %( this recovers the true transport map when the regularization parameter $\epsilon\to 0$). 
where $\mathbf{y}=(\mathbf{y}^1,\mathbf{y}^2,\mathbf{y}^3)\in \RR^{J\times 3}$ is the vector of coordinates of the points in $Y(=X)$. The modified image is finally obtained as $T\circ g_X$.

On Figure \ref{fig_colortransfer}, we display the color transfer between very dissimilar images, computed with parameter $\epsilon = 0.002$. The algorithm was stopped after $2000$ iterations and the running time was approximately $160$ seconds. This intentionally challenging example is insightful as it exhibits a strong effect of the choice of the divergence.
There are no quantitative measures for the quality of a transformed image, but the application of unbalanced optimal transport allows to select the ``right amount of colors'' in the target histogram so as to match the modes of the initial histogram and yields meaningful results.

\begin{figure}
 \centering
\begin{subfigure}{0.34\linewidth} 
\centering
 \resizebox{1.\linewidth}{!}{
\includegraphics[clip,trim=0cm 0cm 0cm 0cm]{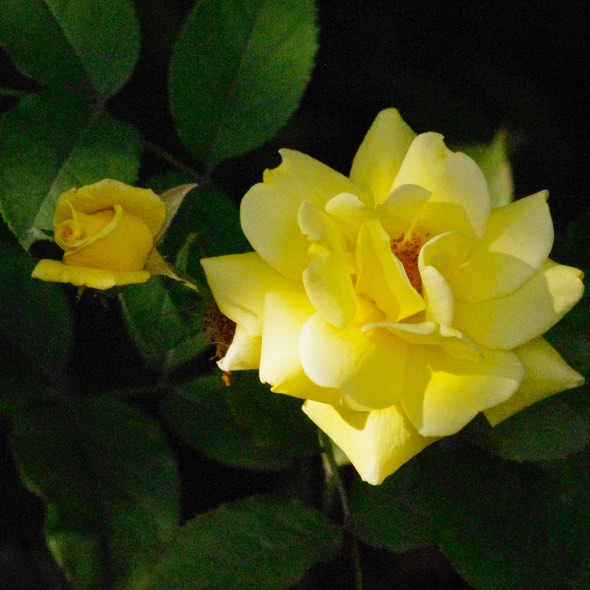}
}
\caption{Image $g_X$}
\end{subfigure}%
\quad
\begin{subfigure}{0.34\linewidth} 
\centering
 \resizebox{1.\linewidth}{!}{
\includegraphics[clip,trim=0cm 0cm 0cm 0cm]{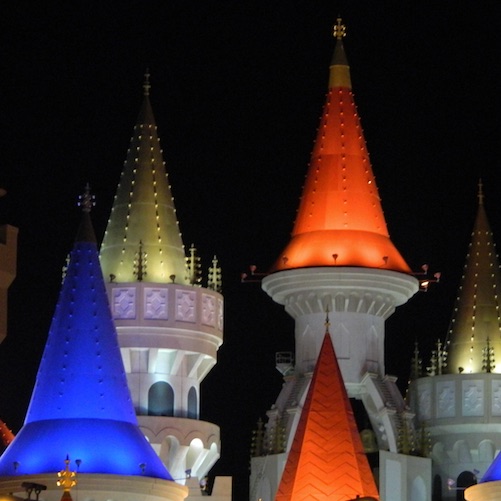}
}
\caption{Image $g_Y$}
\end{subfigure}
\hfill
\begin{subfigure}{0.24\linewidth} 
\centering
 \resizebox{1.\linewidth}{!}{
\includegraphics[clip,trim=0cm 0cm 0cm 0cm]{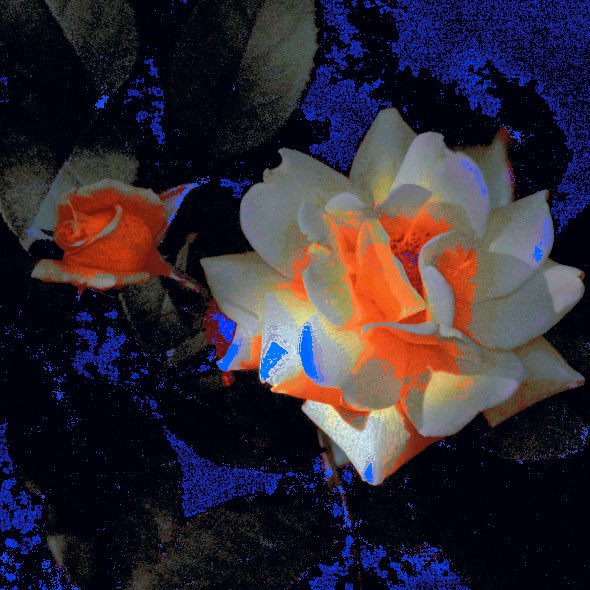}
}
\caption{$\iota_{\{=\}}$}
\end{subfigure}%
\begin{subfigure}{0.24\linewidth} 
\centering
 \resizebox{1.\linewidth}{!}{
\includegraphics[clip,trim=0cm 0cm 0cm 0cm]{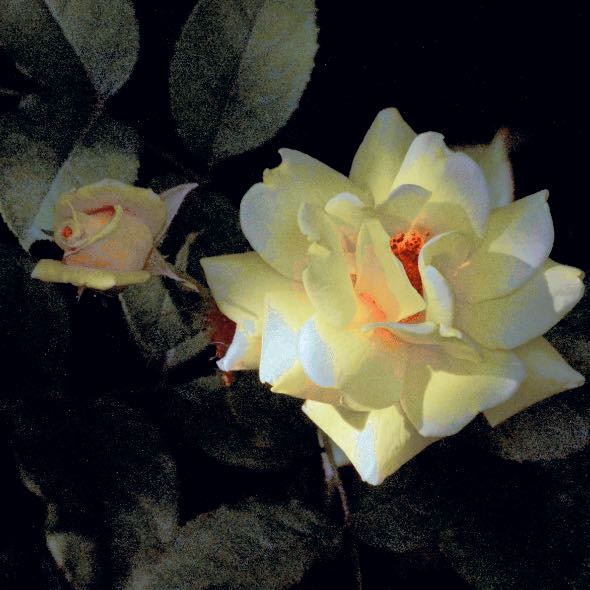}
}
\caption{$0.03\times \KL$}
\end{subfigure}%
\begin{subfigure}{0.24\linewidth} 
\centering
 \resizebox{1.\linewidth}{!}{
\includegraphics[clip,trim=0cm 0cm 0cm 0cm]{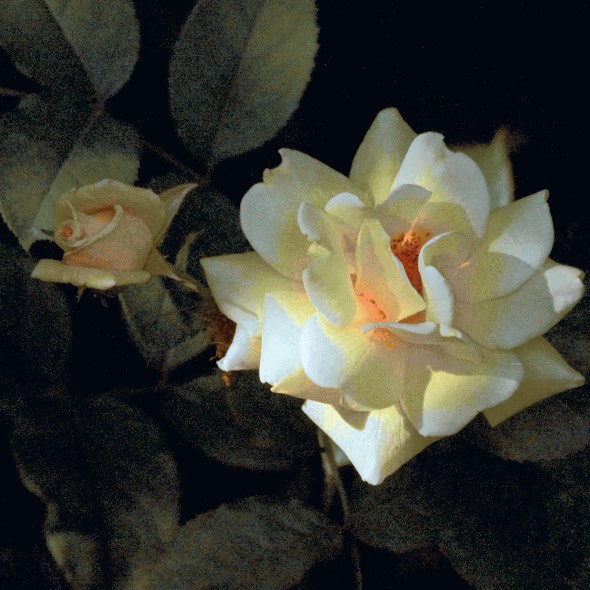}
}
\caption{$\RG_{[0, \, 5]}$}
\end{subfigure}%
\begin{subfigure}{0.24\linewidth} 
\centering
 \resizebox{1.\linewidth}{!}{
\includegraphics[clip,trim=0cm 0cm 0cm 0cm]{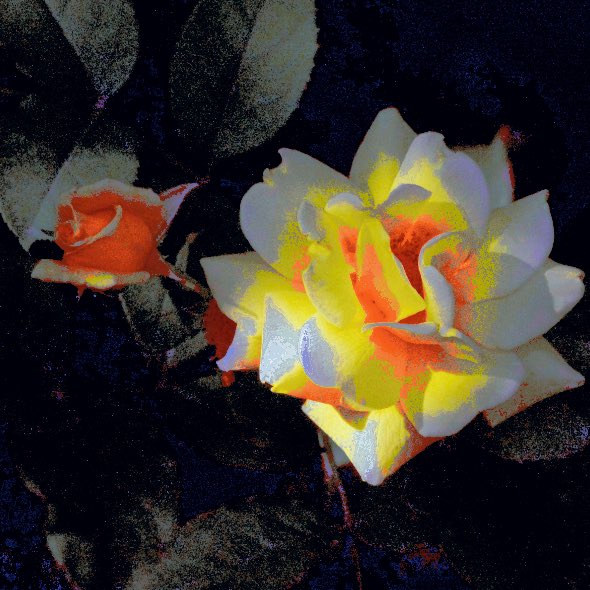}
}
\caption{$0.2\times \TV$}\label{subfig:colorTV}
\end{subfigure}%
\caption{A challenging color transfer experiment where the colors of the image $g_Y$ are transferred to the image $g_X$. In all cases $F_1=\iota_{\{=\}}(\cdot\,|\,p)$ and $F_2$ is the divergence with respect to $q$ specified in the caption. Note that in (F) some colors are not ``displaced''. The parameters for $F_{2}$ are chosen arbitrarily.}
\label{fig_colortransfer}
\end{figure}

% !TEX root = ../EntropicNumeric.tex

\subsection{Unbalanced Barycenters}
\label{sec_appli_bary}

\paragraph{Well-posedness.}
Barycenters and related problems have been defined in Section \ref{subsec_barycenter}. Assume we are given a family of $n$ measures $(p_k \d x )_{k=1}^n$, families of entropies $(\phi_{1,k})_{k=1}^n$ and $(\phi_{2,k})_{k=1}^n$ and families of cost functions $(c_k)_{k=1}^n$. For a clearer picture, let us focus on the case where 
\eq{
 \phi_{2,k}=\alpha_k \cdot \lambda \cdot \varphi \quad \text{for} \quad k=1,\ldots,n
 }
where $\varphi$ is a nonnegative entropy function, $(\alpha_k)_{k=1}^n \in ]0,\infty[^n$ are weights and $\lambda>0$ is a (redundant) parameter. It is also convenient to slightly modify (for this Section only) the definition of the $\KL$ divergence given in \eqref{eq_KLdens} by introducing weights as
\eq{
\KL(r|s) \eqdef \sum_k \alpha_k \cdot \KL(r_k|s_k) \, .
}
No theoretical aspect is affected by this small change, but the definition of the kernel has to be adapted, for each component, as
\eq{
K_k = \exp (-{c_k}/(\alpha_k\, \epsilon))
}
so that one still has $\langle c, r \rangle + \epsilon \KL(r|1) = \epsilon \KL(r|K)+\textnormal{const}$.
By equation \eqref{eq_barycenter}, the barycenter problem with entropic regularization corresponds to defining   
\eq{
F_1(r)  = \sum_{k=1}^n \Divergm_{\phi_{k,1}}(r_k \d x| p_k \d x)
\qandq
F_2(s) = \inf_{\sigma\in \Mm_+(Y)} \lambda \sum_{k=1}^n  \alpha_k \Divergm_{ \phi}(s_k \d y| \sigma)
}
(for all $r\in \Lun(X)^n$ and $s\in \Lun(Y)^n$) in \eqref{eq-general-regul-KL}. It is direct to see that the proximal operator of $F_1$ for the $\KL$ divergence (according to our specific definition of $\KL$) can be computed componentwise as 
\eql{\label{eq_proxcomponentbary}
\prox^{\KL}_{F_1}(r) = (\prox_{\Diverg_{\phi_{1,1}}(\cdot | p_1)}^{\KL}(r_1),\dots,\prox_{\Diverg_{\phi_{n,1}}(\cdot | p_n)}^{\KL}(r_n)) 
}
so we can use the results from the previous section (recall that $\Diverg$ denotes divergences between functions as in \eqref{eq_divergencefunctions}). Let us turn our attention to the function $F_2$. As it is assumed $\phi(0)\geq0$, $F_2$ is not changed by taking the infimum over $\sigma$ of the form $h \cdot \d y$ for $h\in \Lun(Y)$.
%As a function of $(s_k,h)$ this function has interesting properties. %They are described in the next Lemma, where we also use the notation
%\eq{
%\ol{\Diverg}_\phi : \RR^2 \ni (s,h) \mapsto
%\begin{cases}
%\phi(s/h)h & \text{if $h>0$} \\
%\phi'_\infty \cdot s & \text{otherwise, with $\infty\times0=0$.}
%\end{cases}
%}
%
So one can express $F_2$ for $s\in \Lun(Y)^n$, with notation of \eqref{eq_divergencefunctions} as
\eql{\label{eq_Fbary}
F_2(s) = \min_{h\in \Lun(Y)}  \lambda \sum_{k=1}^n \alpha_k \Diverg_{\phi} (s_k|h) =  \min_{h\in \Lun(Y)}  \lambda \sum_{k=1}^n \alpha_k \int_X \ol{\Diverg}_{\phi} (s_k(x)|h(x)) \d x\, .
}
\begin{proposition}
If $\phi'_\infty>0$ then $F_2$ is an admissible integral functional in the sense of Definition \ref{def_integralfunctional} and for all $s\in \Lun(Y)^n$, there exists a minimizer $h\in \Lun(Y)$. 
Moreover, if $r^\star \in \Lun(X\times Y)^n$ minimizes \eqref{eq-general-regul-KL}, then the associated minimizer $h^\star$ is a pointwise minimizer of \eqref{eq_Fbary} at the point $s=P^Y_\# r^\star$.
\end{proposition}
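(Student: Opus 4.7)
The plan is to reduce $F_2$ to an integral functional whose integrand is the pointwise minimum in $h\in \RR_+$, and then lift back to an $\Lun$-minimization via the Rockafellar--Wets interchange theorem already cited in the paper. First I would introduce the joint integrand $H(s,h) \eqdef \lambda \sum_k \alpha_k \ol{\Diverg}_\phi(s_k|h)$ on $\RR_+^n\times\RR_+$ (equal to $+\infty$ elsewhere). As a sum of perspective functions of the nonnegative entropy $\phi$, $H$ is jointly convex, lsc, and nonnegative (cf.\ Appendix \ref{sec:ApxDivergences}). I then define the pointwise marginal $g(s) \eqdef \inf_{h\geq 0} H(s,h)$, which is convex, nonnegative, independent of $y$, and satisfies $g(0)=0$.

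The central step is to show that for every $s\in \RR_+^n$ the infimum defining $g(s)$ is attained at some finite $h^\star(s)$, and that $s\mapsto h^\star(s)$ can be chosen measurably with a bound $h^\star(s)\leq C\max_k s_k$ for a constant depending only on $\phi$. The map $h\mapsto H(s,h)$ is convex and lsc on $[0,\infty[$ with boundary value $H(s,0)=\lambda \phi'_\infty \sum_k \alpha_k s_k$; the hypothesis $\phi'_\infty>0$ makes this value strictly positive for $s\neq 0$, preventing a degenerate selection at the origin. A short case split on $\phi(0)$ handles the behaviour at infinity: if $\phi(0)>0$ (the case of $\KL$ and $\TV$) then $H(s,\cdot)$ is coercive because $\phi(s_k/h)\to \phi(0)>0$ as $h\to\infty$; otherwise, combined with $\phi(1)=0$ (the natural divergence normalization used throughout the paper) and convexity, $\phi$ vanishes identically on $[0,1]$, so that $H(s,h)=0$ as soon as $h\geq \max_k s_k$. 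In both cases the minimum is attained at a finite point, and the smallest minimizer is measurable in $s$ by \cite[Thm.~14.37]{rockafellar2009variational}, with the stated bound.

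With this in hand, $f_2(y,s)\eqdef g(s)$ is a nonnegative normal convex integrand with $\dom f_2(y,\cdot)\subset \RR_+^n$ and $I_{f_2}(0)=0<\infty$, so $I_{f_2}$ is admissible in the sense of Definition \ref{def_integralfunctional}. Applying the minimization interchange theorem \cite[Thm.~14.60]{rockafellar2009variational} to the normal integrand $(y,(s,h))\mapsto H(s,h)$ yields, for every $s\in \Lun(Y)^n$,
\eq{
F_2(s) \;=\; \inf_{h\in \Lun(Y)} \int_Y H(s(y),h(y))\,\d y \;=\; \int_Y g(s(y))\,\d y \;=\; I_{f_2}(s),
}
the infimum on the left being attained at $h=h^\star\circ s$, which lies in $\Lun(Y)$ thanks to the pointwise bound $h^\star(s(y))\leq C\max_k s_k(y)$. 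This simultaneously proves admissibility of $F_2$ and existence of the $\Lun$-minimizer. The last assertion of the proposition is then immediate: if $r^\star$ minimizes \eqref{eq-general-regul-KL}, the optimal $h^\star$ realizing $F_2(P^Y_\# r^\star)$ in \eqref{eq_Fbary} is, by the interchange identity, precisely a measurable selection of pointwise minimizers of $h\mapsto H(P^Y_\# r^\star(y),h)$ at a.e.\ $y$.

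The main obstacle is the pointwise attainment and measurable-selection step: the role of $\phi'_\infty>0$ must be combined carefully with the behaviour of $\phi$ at $0$ to rule out both degenerate scenarios (the minimizer collapsing to $h=0$ for $s\neq 0$, or escaping to $h=+\infty$ with no finite realizer), and the pointwise estimate on $h^\star$ must be made quantitative enough to guarantee $h^\star\circ s\in \Lun(Y)$ for every $s\in \Lun(Y)^n$, which is what allows the Rockafellar--Wets interchange to close the argument in the last step.
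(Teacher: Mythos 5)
Your overall route is the same as the paper's: both arguments reduce the infimum over $h$ to a pointwise minimization of the integrand and then lift back via Rockafellar's machinery --- the paper packages this as the ``reduced minimization theorem'' \cite[Corollary 3B]{rockafellar1976integral}, while you unpack it by hand with a measurable selection and the interchange theorem \cite[Thm.~14.60]{rockafellar2009variational}. The unpacked version is more informative, and your replacement of the paper's step (ii) (that $\Diverg_\phi(u|v)<\infty$ with $u\in\Lun$ forces $v\in\Lun$) by the quantitative pointwise bound $h^\star(s)\le C\max_k s_k$ is a legitimate alternative way to obtain $h^\star\circ s\in \Lun(Y)$.

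The gap is in the attainment step. To control $h\mapsto\sum_k\alpha_k\,\ol{\Diverg}_\phi(s_k|h)$ as $h\to\infty$ in the case $\phi(0)=0$ you invoke the normalization $\phi(1)=0$; this is not among the hypotheses (Definition \ref{def_entropy} does not impose it, and a nonnegative entropy such as $\iota_{[\beta_1,\beta_2]}$ with $1\notin[\beta_1,\beta_2]$ violates it). Without it your dichotomy does not close: for $\phi(t)=t^2$ one has $\phi'_\infty=\infty>0$ and $\phi(0)=0$, yet $h\,\phi(s/h)=s^2/h$ decreases to $0$ without attaining it, so no finite pointwise minimizer (hence no $\Lun$ minimizer) exists. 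The direction $h\to\infty$ is governed by $\phi(0)$, the recession value of the perspective in its second argument, not by $\phi'_\infty$; correspondingly, your claim that $\phi'_\infty>0$ serves to ``prevent a degenerate selection at the origin'' is a misattribution, since a minimizer at $h=0$ would be a perfectly admissible finite minimizer and causes no difficulty for existence --- in the paper $\phi'_\infty>0$ is instead invoked for the growth condition of \cite[2R]{rockafellar1976integral} and for the $\Lun$-membership of the minimizer. Your argument is therefore sound for the examples actually treated in Table \ref{prop_estimatebary} (all of which satisfy $\phi(0)>0$ or $\phi\equiv 0$ on the relevant ratios), but it does not establish the proposition under the stated hypotheses alone; you should state $\phi(1)=0$ (or $\phi(0)>0$) as an additional assumption rather than present it as a consequence of the setting.
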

\begin{proof}
Let $G(s,h)$ be the function on the right side, which is an admissible integral functional (see Proposition \ref{prop_divergnormalint} in Appendix \ref{sec:ApxDivergences}). Let us verify the assumptions of the ``reduced minimization Theorem'' \cite[Corollary 3B]{rockafellar1976integral}. Assumption (i) is satisfied because $\phi'_\infty>0$ guarantees the growth condition \cite[2R]{rockafellar1976integral} and assumption (ii) is guaranteed by the fact that if $u\in \Lun(Y)$ and $\Diverg_\phi(u|v)<\infty$ then $v\in \Lun(Y)$ (this is proven by adapting slightly the proof of Lemma \ref{lem_KL_L1}, using the---at least linear--- growth of $\phi$). Thus the cited Corollary applies.
\end{proof}
%
%%%%%%%%%%%%%%%%%%%%%%
%
\paragraph{Derivation of the algorithm.}
According to Proposition \ref{prop_proxKLcont}, computing the $\prox$ and $\proxdiv$ operators requires to solve, for each point $y \in Y$, a problem of the form
\eql{
\label{eq_prox_barycenter}
\min_{(\tilde{s},h)\in \RR^n\times \RR}
\sum_{k=1}^n \alpha_k \left( \epsilon \cdot \ol{\KL} (\tilde{s}_k |s_k) + \lambda \cdot \ol{\Diverg}_\phi (\tilde{s}_k | h) \right)\,.
}
If $\phi$ is smooth, first order optimality conditions for \eqref{eq_prox_barycenter} are simple to obtain. The next Proposition deals with the general case where more care is needed.
\begin{proposition}
\label{prop_barycenter_general}
Let $(s_i)_{i=1}^n\in \RR_+^n$. Assume that there exists a feasible candidate $(\tilde{s}^0,h^0)$ for \eqref{eq_prox_barycenter} such that $(s_i>0) \Rightarrow (\tilde{s}^0_i>0)$. A candidate $(\tilde{s},h)$ is a solution of \eqref{eq_prox_barycenter} if and only if 
\begin{itemize}
\item $(s_i=0) \Leftrightarrow (\tilde{s}_i=0)$ and 
\item there exists $b \in\RR^n$ such that $\sum_{k=1}^n \alpha_k b_k =0$ and, for all $k\in \{1,\dots, n\}$, $(a_k,b_k) \in \partial \Diverg_{\phi}(\tilde{s}_k | h)$ with $a_k \eqdef \frac{\epsilon}{\la} \log \frac{s_k}{\tilde{s}_k}$ if $s_k>0$ and $b_k \in \partial_2 \Diverg_\phi(0,h)$ otherwise.
\end{itemize}
\end{proposition}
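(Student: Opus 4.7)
The problem \eqref{eq_prox_barycenter} is jointly convex in $(\tilde{s},h)$: the $\ol{\KL}(\tilde s_k|s_k)$ term is convex in $\tilde s_k$, and $\ol{\Diverg}_\phi(\tilde s_k|h)$ is jointly convex as the perspective function of $\phi$. The plan is to first establish that at any minimizer $\tilde s_k=0$ exactly when $s_k=0$, then translate the subdifferential optimality condition $0\in\partial J(\tilde s,h)$ into the stated pointwise relations, and finally check that the conditions are sufficient by a direct subgradient-inequality argument.

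First I would prove the zero-pattern statement $(s_k=0)\Leftrightarrow(\tilde s_k=0)$. The implication $\Rightarrow$ is forced because $\ol{\KL}(t|0)=t\cdot\phi'_{\KL,\infty}=+\infty$ for $t>0$, so any point with $s_k=0$, $\tilde s_k>0$ has infinite cost. For the reverse implication, if $s_k>0$ but $\tilde s_k=0$ at the candidate, I would use the feasibility hypothesis: the candidate $(\tilde s^0,h^0)$ provides a direction $(\tilde s+t(\tilde s^0-\tilde s),h+t(h^0-h))$ along which the derivative at $t=0^+$ of the $\alpha_k\epsilon\ol{\KL}(\cdot|s_k)$ term equals $-\infty$ (because $\phi'_{\KL}(0^+)=-\infty$ and $\tilde s^0_k>0$), while the $\Diverg_\phi$ contribution has a derivative bounded below along an admissible direction. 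Hence $\tilde s_k=0$ is incompatible with optimality.

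Next I would derive the subgradient conditions. By the Moreau--Rockafellar sum rule (whose qualification condition is ensured by the feasible candidate $(\tilde s^0,h^0)$ lying in the relative interior of the effective domain along each coordinate), a pair $(\tilde s,h)$ minimizes $J$ if and only if $0\in\partial J(\tilde s,h)$. Writing the objective as $\sum_k\alpha_k[\epsilon\ol{\KL}(\tilde s_k|s_k)+\lambda\ol{\Diverg}_\phi(\tilde s_k|h)]$, whose first summand depends only on $\tilde s_k$ and whose second is jointly convex in $(\tilde s_k,h)$, the optimality condition becomes: for each $k$ there exists $(A_k,B_k)\in\partial\ol{\Diverg}_\phi(\tilde s_k|h)$ such that $\alpha_k\epsilon\,\partial\ol{\KL}(\tilde s_k|s_k)+\alpha_k\lambda A_k\ni 0$ componentwise in $\tilde s_k$, and $\sum_k\alpha_k\lambda B_k=0$ for the shared variable $h$. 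When $s_k>0$ the $\ol{\KL}$ term is smooth at $\tilde s_k>0$ with derivative $\log(\tilde s_k/s_k)$, yielding $A_k=\tfrac{\epsilon}{\lambda}\log(s_k/\tilde s_k)=a_k$; when $s_k=0$ (so $\tilde s_k=0$) the $\tilde s_k$-direction constraint collapses and only $B_k\in\partial_2\ol{\Diverg}_\phi(0|h)$ survives. Setting $b_k=B_k$ matches exactly the stated conditions.

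For the converse (sufficiency), I would assume the conditions hold and invoke the subgradient inequality for each term. Summing $\ol{\KL}(\tilde s'_k|s_k)\ge\ol{\KL}(\tilde s_k|s_k)+a'_k(\tilde s'_k-\tilde s_k)$ with $a'_k\in\partial_1\ol{\KL}$, and the joint inequality $\ol{\Diverg}_\phi(\tilde s'_k|h')\ge\ol{\Diverg}_\phi(\tilde s_k|h)+a_k(\tilde s'_k-\tilde s_k)+b_k(h'-h)$, weighted by $\alpha_k\epsilon$ and $\alpha_k\lambda$ respectively, the cross terms in $(\tilde s'_k-\tilde s_k)$ cancel by the identity $\alpha_k\epsilon\log(\tilde s_k/s_k)+\alpha_k\lambda a_k=0$ (with the boundary case $s_k=0\Rightarrow\tilde s_k=0$ handled separately using the first condition) and the global term $(h'-h)\sum_k\alpha_k\lambda b_k$ vanishes by $\sum_k\alpha_kb_k=0$. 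The main obstacle in this approach is the careful bookkeeping at the non-smooth boundary, namely the case $s_k=0$ and the treatment of $\partial_2\ol{\Diverg}_\phi(0|h)$, and the need to ensure that the subdifferential sum-rule qualification holds precisely thanks to the feasibility assumption on $(\tilde s^0,h^0)$.
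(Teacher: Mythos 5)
Your proposal is correct and follows essentially the same route as the paper: justify the subdifferential sum rule via the positive feasible candidate, write the first-order conditions componentwise in $\tilde s$ and globally in the shared variable $h$, and handle the indices with $s_k=0$ by fixing $\tilde s_k=0$ and removing those variables (your explicit sufficiency check via subgradient inequalities is a harmless addition). One cosmetic remark: in the step ruling out $\tilde s_k=0$ when $s_k>0$, what you need from the $\Diverg_\phi$ term is that its directional derivative toward the feasible candidate is bounded \emph{above} (i.e.\ $<+\infty$, which holds automatically by convexity since both endpoints are feasible), not bounded below.
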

%
%\todo{Write somewhere a Lemma of positivity (like theorem 2.7 is Borwein Lewis) (related to existence of dual opt)}
\begin{proof}
First assume that $s_i>0$ for all $i\in \{1,\dots ,n\}$. The positivity assumption on the feasible point implies that the sum of the subdifferential is the subdifferential of the sum by continuity of $\KL$ for positive arguments.  Moreover, a minimizer necessarily satisfies $\tilde{s}_i>0$ for all $i$. Consequently, the subderivative of the function in \eqref{eq_prox_barycenter} at $((\tilde{s}_i)_n,h)$ is the set of vectors in $\RR^{n+1}$ of the form
\eq{
\begin{pmatrix}
\vdots \\
\epsilon \alpha_i \log (\tilde{s}_i/s_i) + \lambda \alpha_i a_i  \\
\vdots \\
\lambda \sum_k \alpha_k b_k
\end{pmatrix}
}
with $(a_i,b_i) \in \partial \Diverg_{\phi}(\tilde{s}_i | h)$. Writing the first order optimality condition yields the second condition of the Proposition. Now for all $i$ such that $s_i$ is null, set $\tilde{s}_i=0$ (this is required for feasibility) and do the reasoning above by withdrawing the variables $\tilde{s}_i$ which have been fixed.
\end{proof}
Note that once the optimal $h$ is found (possibly with the help of the optimality conditions of Proposition \ref{prop_barycenter_general}), determining the optimal values for $\tilde{s}$ can be done componentwise as in \eqref{eq_proxcomponentbary} with the help of Proposition \ref{prop_UOTprox}.
In Table \ref{prop_estimatebary} we provide formulas for $h$ for some examples (proofs can be found in Appendix \ref{sec:AppendixBarycenterIterates}), for the subsequent computation of $\tilde{s}_k$ (and the $\proxdiv$ step) we refer the reader to Table \ref{table_proximalexplicit} (where of course one must replace $p$ by $h$).
\begin{table}[h] \centering
%\arraystretch
\begin{tabular}{@{}lrcl@{}}\toprule
&$\Diverg_\phi$ && Formula for $h$ as a function of $s\in \RR^n$ \\ \midrule \addlinespace 
(i)&$ \iota_{\{=\}} $
&&$h= \left( \prod_k s_k^{\alpha_k} \right)^{\frac{1}{\sum_k \alpha_k}}$
\\ \addlinespace \addlinespace 
(ii)&$\lambda \KL  $
&&  $h = \left( \frac{\sum_k \alpha_k s_k^{\frac{\epsilon}{\epsilon+\la}} }{\sum_k \alpha_k} \right)^{\frac{\epsilon + \la}{\epsilon}}$
\\ \addlinespace \addlinespace 
(iii)&$\lambda \TV$
&& if $\sum_{k\notin I_+} \alpha_k \geq \sum_{k\in I_+} \alpha_k $ then $h=0$ otherwise solve:\\ \addlinespace
&&& $\sum_{k\notin I_+} \alpha_k+ \sum_{k\in I_+}\alpha_k \max \left( -1 , \min \left( 1 , \frac{\epsilon}{\lambda } \log \frac{h}{s_i}\right) \right) = 0$
\\ \addlinespace \addlinespace
(iv)&$ \RG_{[\beta_1,\beta_2]} $
&&  if $s_k=0$ for some $k$ then $h=0$ otherwise solve:\\ \addlinespace
&&&$\sum_k \alpha_k \left[ \beta_2 \min\left( \log \frac{\beta_2\, h}{s_k} ,0 \right) + \beta_1 \max \left( \log \frac{\beta_1\, h}{s_k} ,0\right) \right] =0$
\\ \addlinespace \addlinespace \bottomrule
\end{tabular}
\caption{Expression for the minimizer $h$ of \eqref{eq_prox_barycenter} as a function of $s\in \RR^n$ where $I_+ = \{k \, ; \, s_k>0 \}$ (proofs in Appendix \ref{sec:AppendixBarycenterIterates}). For the implicit equations of cases (iii) and (iv), an exact solution can be given quickly because computing $\log h$ consists in finding the root of a piecewise linear non-decreasing function (with at most $2n$ pieces), which is guaranteed to change its sign.}
\label{prop_estimatebary}
\end{table}
\paragraph{Numerical experiments.}
Geodesics in Wasserstein and Wasserstein--Fisher--Rao space can be computed as weighted barycenters between their endpoints. In Figure \ref{fig:Geodesics} geodesics for the Wasserstein--Fisher--Rao distance on $X=Y=[0,1]$ for different cut-loci $d_{\max}$ and the Wasserstein distance (corresponding to $d_{\max}=\infty$) are compared and the influence of entropy regularization is illustrated.
The interval $[0,1]$ was discretized into $I=J=512$ uniform samples. $\mathbf{\d x}$ and $\mathbf{\d y}$ were chosen to be the discretized Lebesgue measure.

In Figure \ref{fig_1Dbarycenter}, we display Fr\'echet means-like experiments for a family of $4$ given marginals where $X=Y$ is the segment $[0,1]$ (discretized as above). The (discrete) densities of the marginals $(\mathbf{p}_k)_{k=1}^4$ consist each of the sum of three bumps (centered near the points $x=0.1$, $x=0.5$ and $x=0.9$). These computations where performed with Algorithm \ref{algo_scaling_stabilized} which was stopped after $1500$ iterations (running time of $30$ seconds approximately) and with $\epsilon=10^{-5}$. We observe that relaxing the marginal constraints (Figures \ref{fig_bary1d_KL}-\ref{fig_bary1d_WF}) allows to conserve this structure in three bumps in contrast to classical optimal transport (Figure \ref{fig_bary1d_OT}).

\begin{figure}
	\centering %
	\includegraphics[width=12cm]{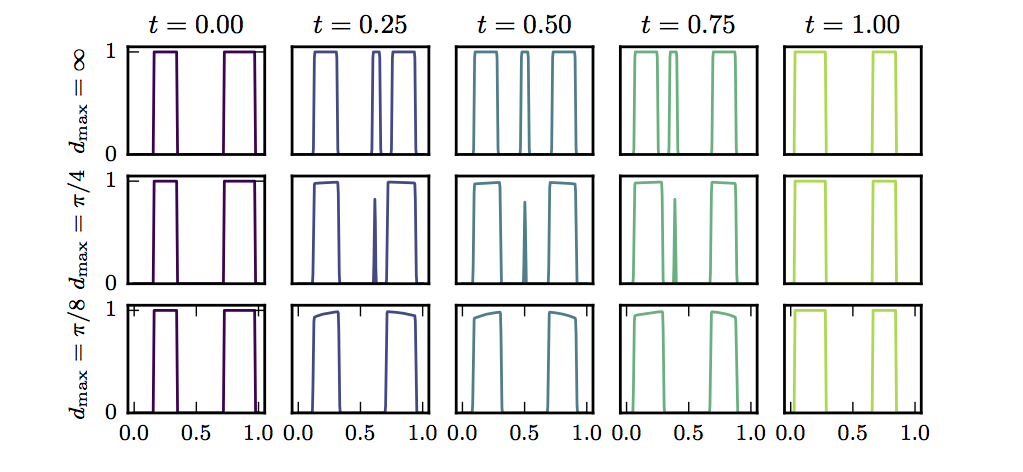} \\%
	\includegraphics[width=8cm]{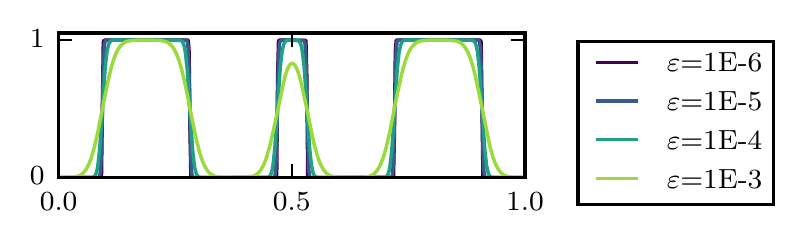} %
	\caption{%
	\textit{Top:} Geodesics in Wasserstein distance ($d_{\max}=\infty$) and Wasserstein--Fisher--Rao distance ($d_{\max}$ gives the cut-locus distance) as weighted (unbalanced) barycenters between endpoints. %
	For $d_{\max}=\infty$ the mass difference between left and right blocks must be compensated by transport. As $d_{\max}$ is reduced, the mass difference is increasingly compensated by growth and shrinkage. In all experiments $\epsilon=10^{-6}$. %
	\textit{Bottom:} Midpoint of the Wasserstein geodesic for various $\epsilon$.
	} %
	\label{fig:Geodesics}
\end{figure}

Figure \ref{fig_wasserstein_triangle} and \ref{fig_GHK_triangle} display barycenters for the Wasserstein and the $\GHK$ metric (defined in Section \ref{subsec_unbalanced}) between three densities on $[0,3]^2$ discretized into $200\times 200$ samples. Computations where performed using Algorithm \ref{algo_scaling_discrete} and the ``separable kernel'' method (see Section \ref{sec_implementation}) which was stopped after $1500$ iterations (running time of $70$ seconds approximately) with $\epsilon=9.10^{-4}$. The barycenter coefficients are the following:
\begin{gather*}
(0,1,0)\\
(1,3,0)/4 \qquad (0,3,1)/4 \\
(2,2,0)/4 \qquad (1,2,1)/4 \qquad (0,2,2)/4 \\
(3,1,0)/4 \qquad (2,1,1)/4 \qquad (1,1,2)/4 \qquad (0,1,3)/4\\
(1,0,0)    \qquad (3,0,1)/4 \qquad (2,0,2)/4 \qquad (1,0,3)/4 \qquad (0,0,1)
\end{gather*}
The input densities have a similar global structure: each is made of three distant ``shapes'' of varying mass. The comparison between Figures  \ref{fig_wasserstein_triangle} and \ref{fig_GHK_triangle} lead to a similar remark than for Figure \ref{fig_1Dbarycenter}: relaxing the strict marginal constraints allows to maintain the global ``structure'' of the input densities.

%%%%%%%%%%%%%%%%%%%%%%%%%%%%%%%%%%%%%%%%%%
\begin{figure}
 \centering
\begin{subfigure}{0.5\linewidth} 
\centering
 \resizebox{1.\linewidth}{!}{
\includegraphics[clip,trim=0cm 0cm 0cm 0cm]{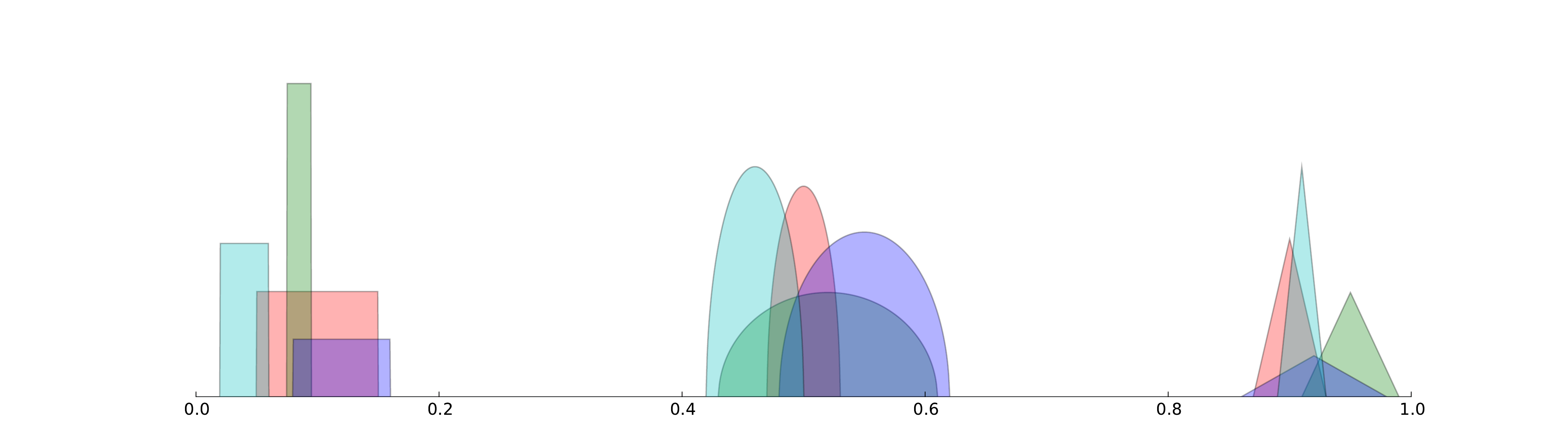}
}
\caption{Marginals $(\mathbf{p}_k)^4_{k=1}$}
\end{subfigure}%
\begin{subfigure}{0.5\linewidth} 
\centering
 \resizebox{1.\linewidth}{!}{
\includegraphics[clip,trim=0cm 0cm 0cm 0cm]{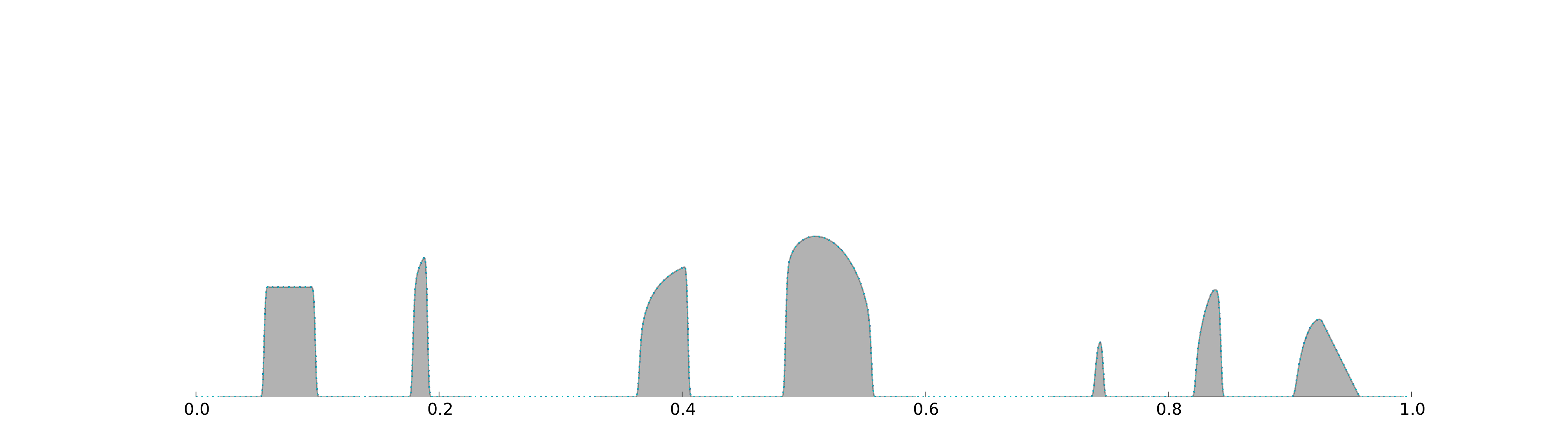}
}
\caption{$\iota_{\{=\}}$}\label{fig_bary1d_OT}
\end{subfigure}
\begin{subfigure}{0.5\linewidth} 
\centering
 \resizebox{1.\linewidth}{!}{
\includegraphics[clip,trim=0cm 0cm 0cm 0cm]{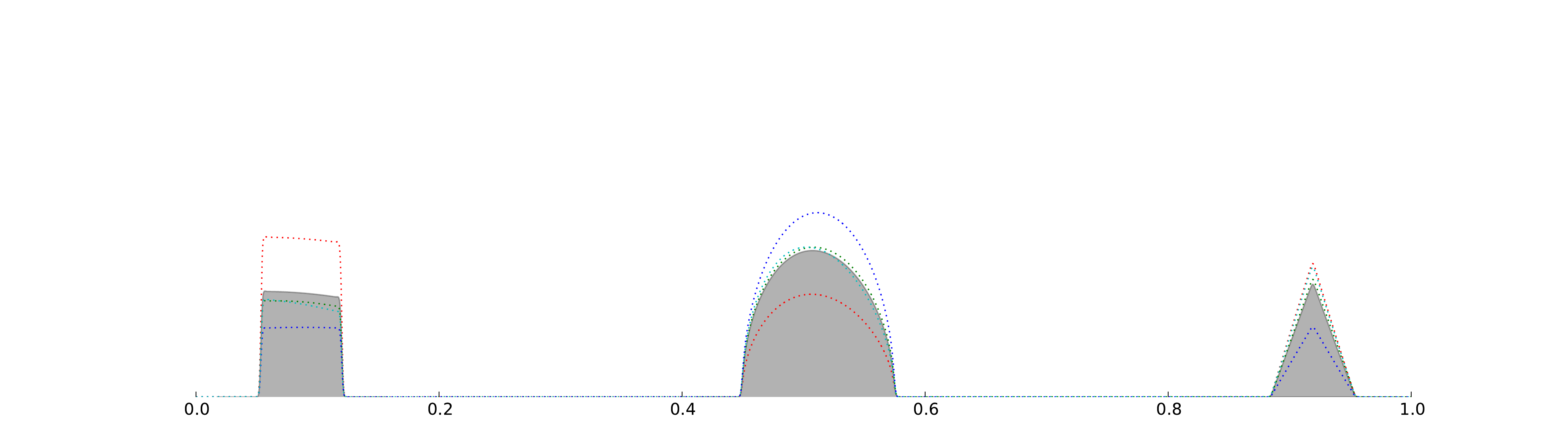}
}
\caption{$0.07\times \KL$}\label{fig_bary1d_KL}
\end{subfigure}%
\begin{subfigure}{0.5\linewidth} 
\centering
 \resizebox{1.\linewidth}{!}{
\includegraphics[clip,trim=0cm 0cm 0cm 0cm]{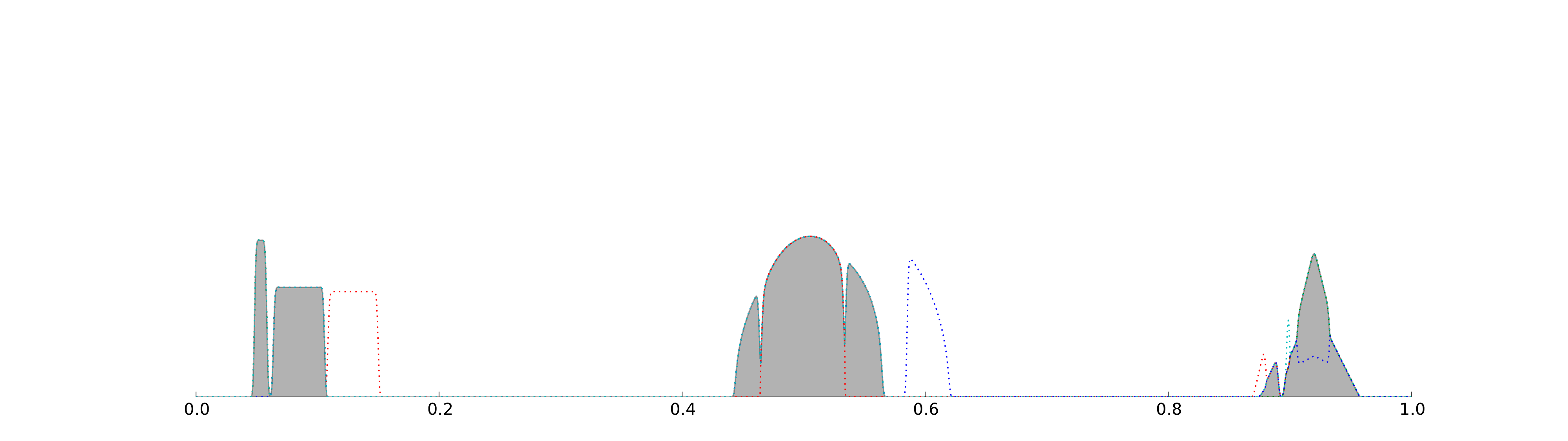}
}
\caption{$0.02\times \TV$}
\end{subfigure}
\begin{subfigure}{0.5\linewidth} 
\centering
 \resizebox{1.\linewidth}{!}{
\includegraphics[clip,trim=0cm 0cm 0cm 0cm]{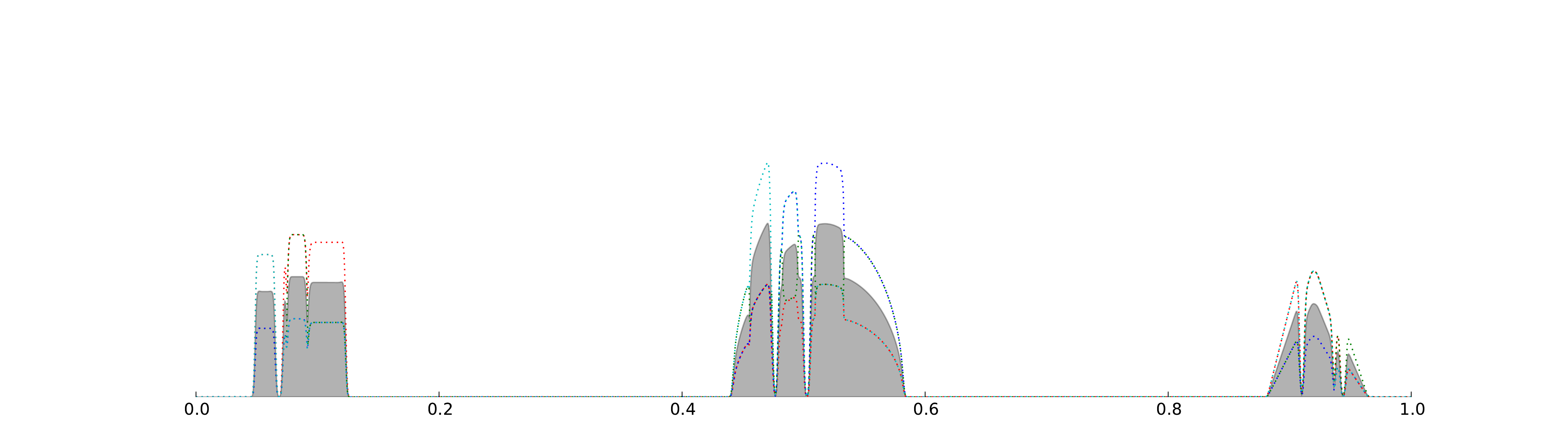}
}
\caption{$\RG_{[0.65,\,1.35]}$}
\end{subfigure}%
\begin{subfigure}{0.5\linewidth} 
\centering
 \resizebox{1.\linewidth}{!}{
\includegraphics[clip,trim=0cm 0cm 0cm 0cm]{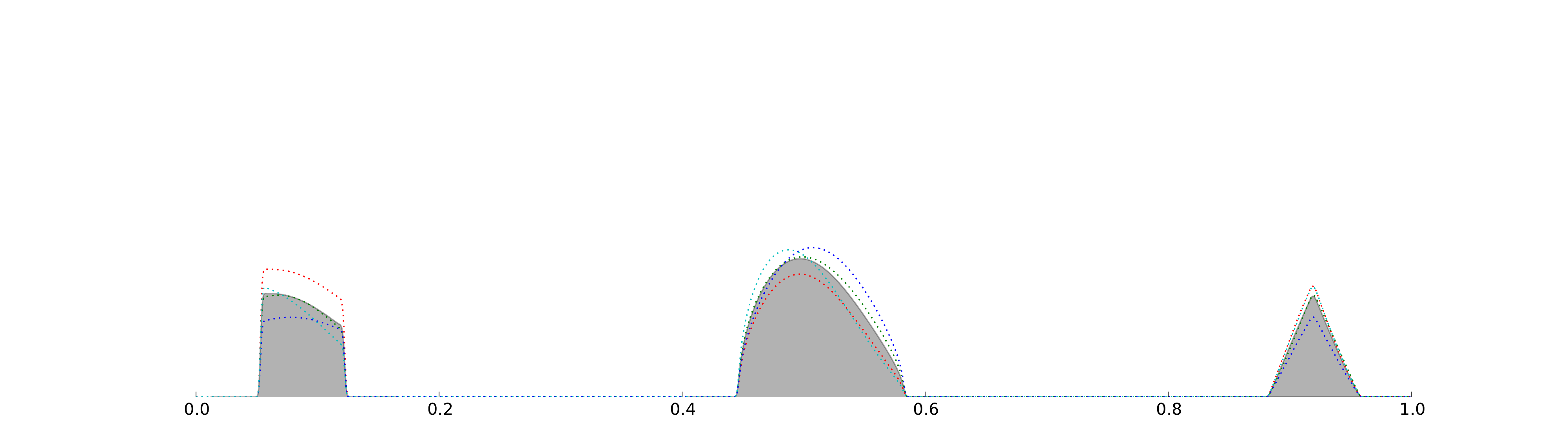}
}
\caption{$\WF$ (cut locus at $0.2$)}\label{fig_bary1d_WF}
\end{subfigure}%
\caption{Illustration of barycenter-like problems on $X=Y=[0,1]$. Except for (F), the function $F_1$ is the equality constraint with respect to the densities $(\mathbf{p}_k)_{k=1}^4$, the cost is $c(x,y)=|y-x|^2$, the weights are $(\tfrac14,\tfrac14,\tfrac14,\tfrac14)$ and the function $F_2$ is of the type \eqref{eq_Fbary} with the divergence specified in the legend. Figure (F) represents the Fr\'echet mean for the $\WF$ metric. The dotted lines display the second marginal of the optimal plans.}
\label{fig_1Dbarycenter}
\end{figure}

%%%%%%%%%%%%%%%%%%%%%%%%%%%%%%%%%%%%%%%%%%
\pgfmathsetmacro{\ax}{33}
\pgfmathsetmacro{\ay}{25}
\pgfmathsetmacro{\b}{1.5}
%%%%%%%%%%%%%%%%%%%%%%%%%%%%%%%%%%%%%%%%%%
\begin{figure}
 \centering 
 \CompressedVersion{
 	\includegraphics{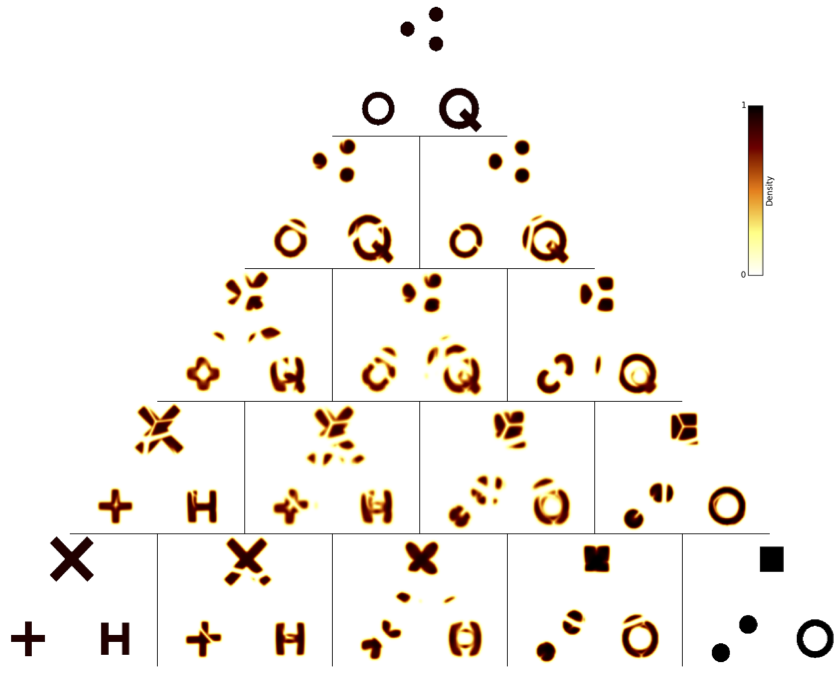}
 }{
  \resizebox{.9\linewidth}{!}{
\begin{tikzpicture}%y,x
\node (1) at (-\ax,-\ay) {\includegraphics[clip,trim=\b cm \b cm \b cm \b cm]{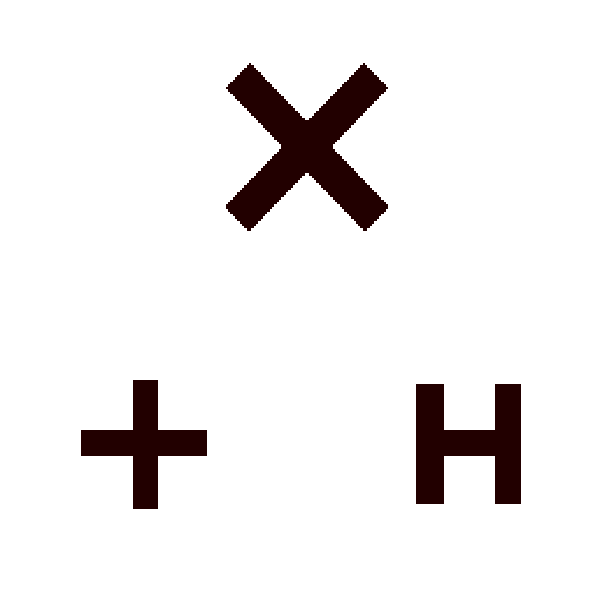}};
\node (2) at (0,\ay) {\includegraphics[clip,trim=\b cm \b cm \b cm \b cm]{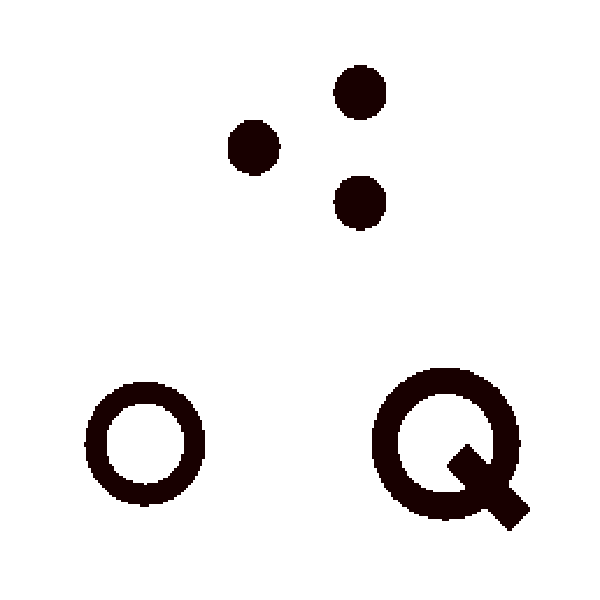}};
\node (3) at (\ax,-\ay) {\includegraphics[clip,trim=\b cm \b cm \b cm \b cm]{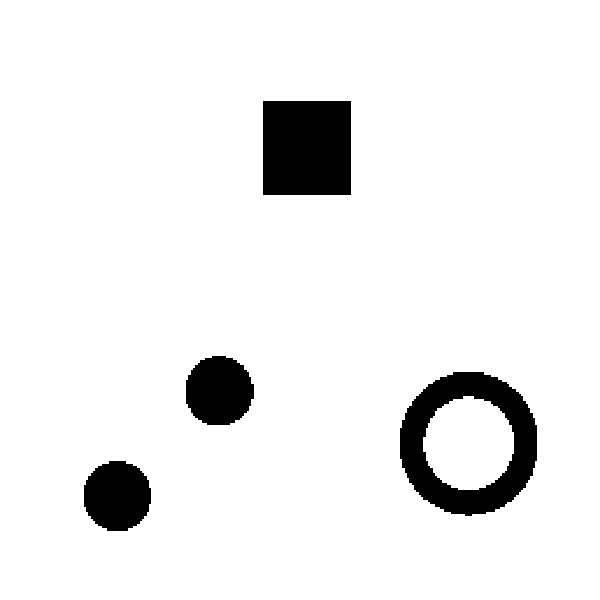}};

\node (4) at (-\ax/2,-\ay) {\includegraphics[clip,trim=\b cm \b cm \b cm \b cm]{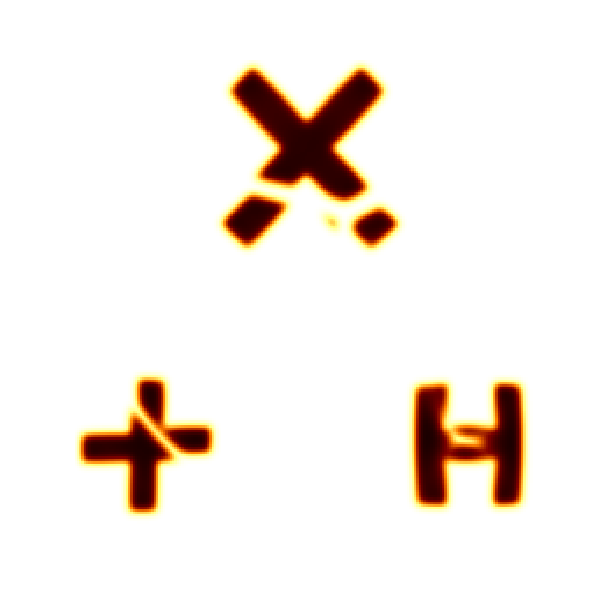}};
\node (5) at (0,-\ay) {\includegraphics[clip,trim=\b cm \b cm \b cm \b cm]{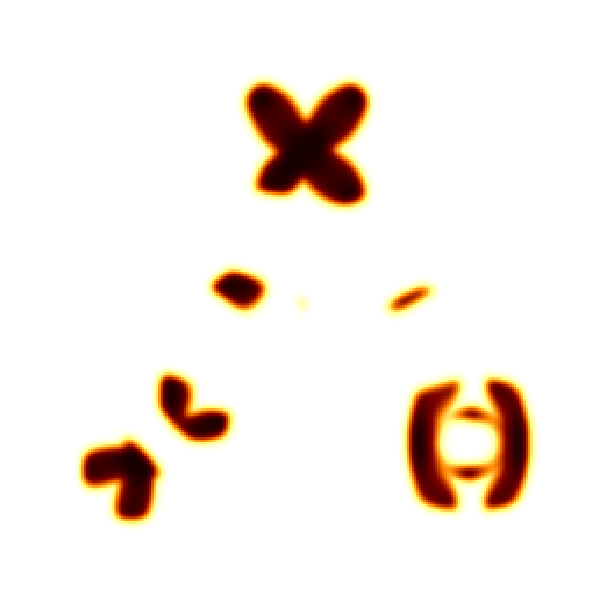}};
\node (6) at (\ax/2,-\ay) {\includegraphics[clip,trim=\b cm \b cm \b cm \b cm]{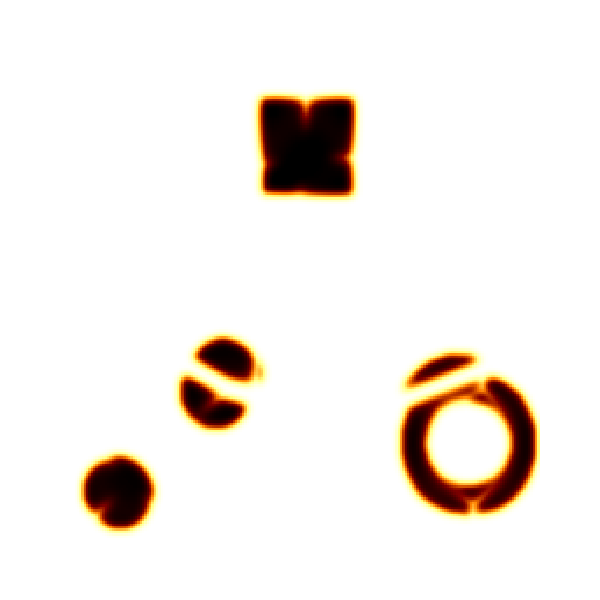}};

\node (7) at (-3*\ax/4,-2*\ay/4) {\includegraphics[clip,trim=\b cm \b cm \b cm \b cm]{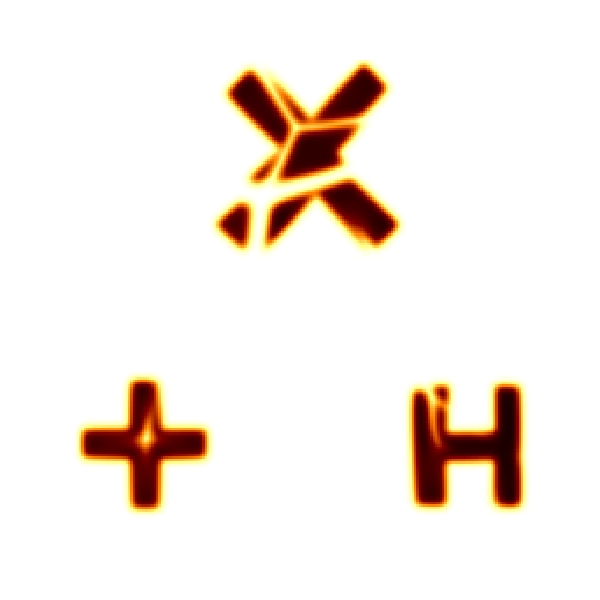}};
\node (8) at (-\ax/4,-2*\ay/4) {\includegraphics[clip,trim=\b cm \b cm \b cm \b cm]{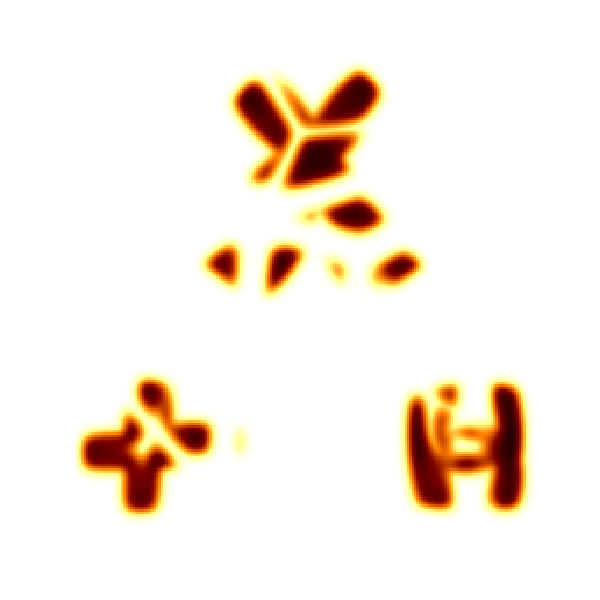}};
\node (9) at (\ax/4,-2*\ay/4) {\includegraphics[clip,trim=\b cm \b cm \b cm \b cm]{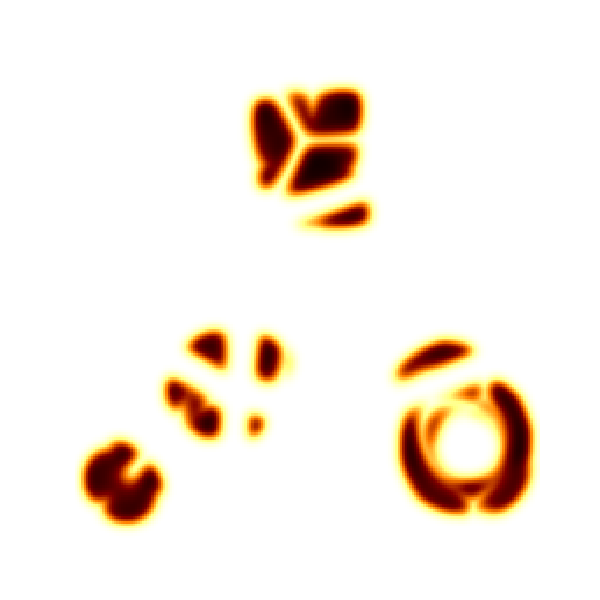}};
\node (10) at (3*\ax/4,-2*\ay/4) {\includegraphics[clip,trim=\b cm \b cm \b cm \b cm]{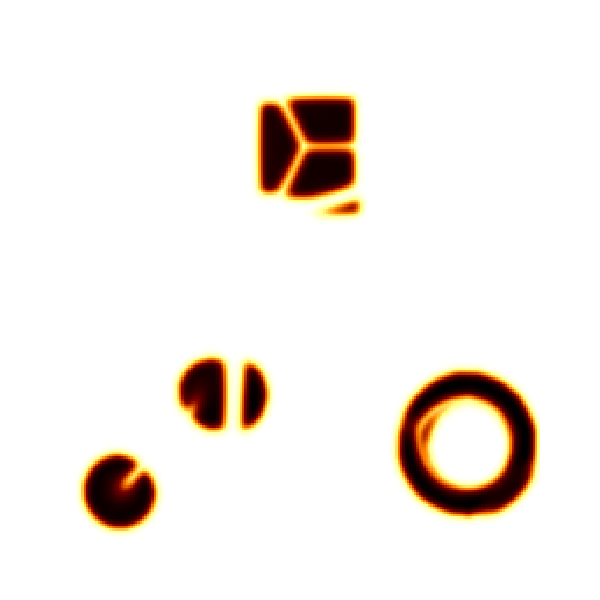}};

\node (11) at (-2*\ax/4,0) {\includegraphics[clip,trim=\b cm \b cm \b cm \b cm]{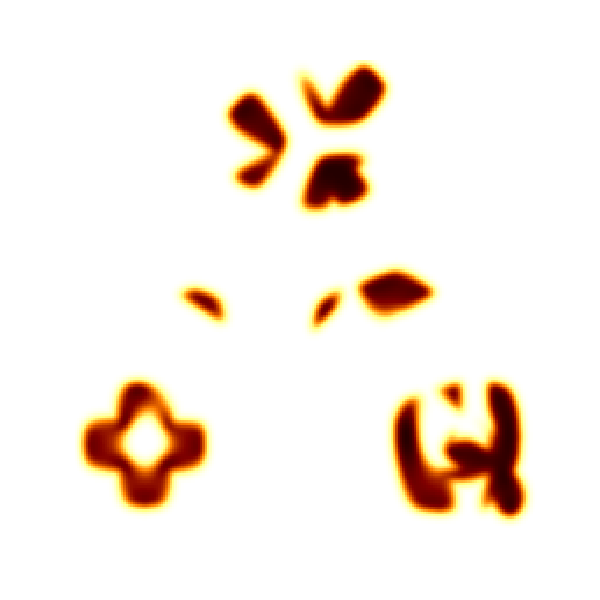}};
\node (12) at (0,0) {\includegraphics[clip,trim=\b cm \b cm \b cm \b cm]{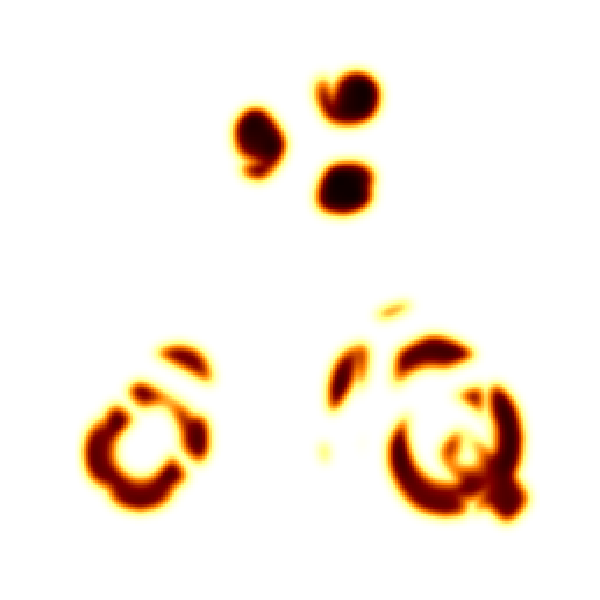}};
\node (13) at (2*\ax/4,0) {\includegraphics[clip,trim=\b cm \b cm \b cm \b cm]{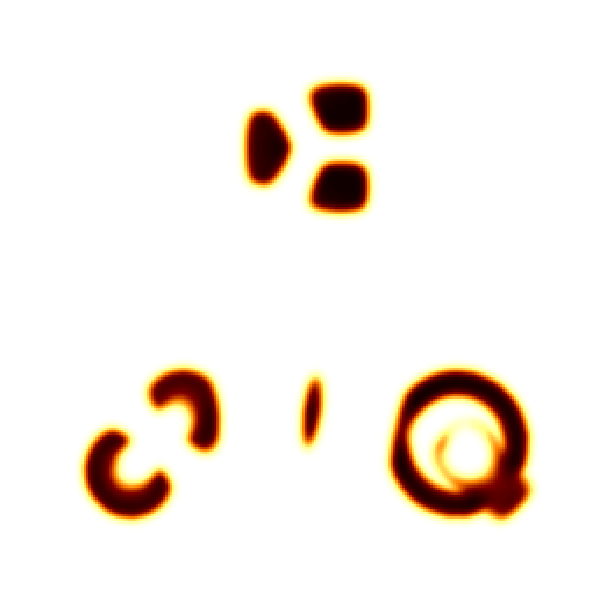}};

\node (14) at (-\ax/4,\ay/2) {\includegraphics[clip,trim=\b cm \b cm \b cm \b cm]{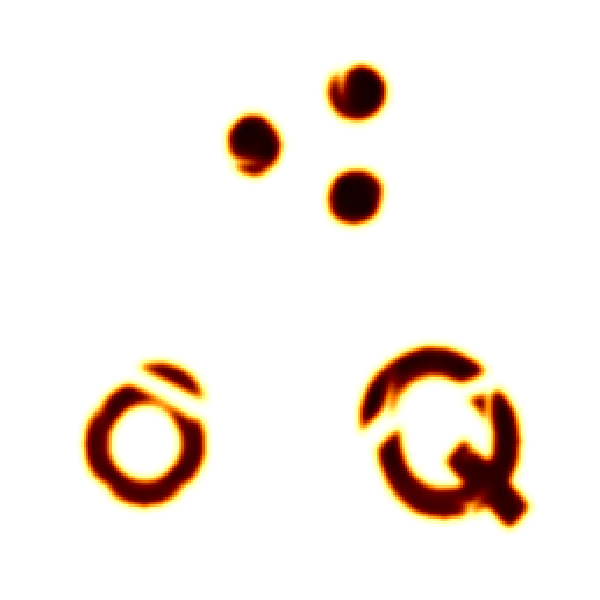}};
\node (15) at (\ax/4,\ay/2) {\includegraphics[clip,trim=\b cm \b cm \b cm \b cm]{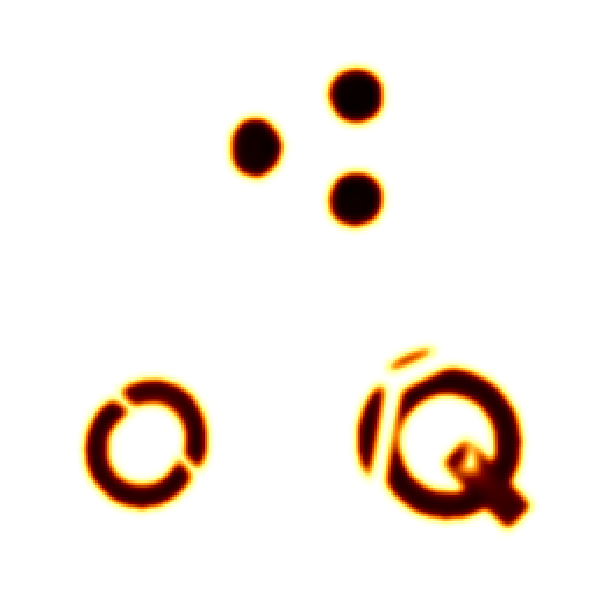}};

\node (15) at (\ax,\ay/2) {\includegraphics[clip,scale=2.5]{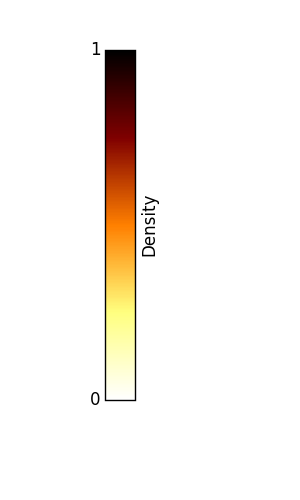}};

\draw[thick] 	(-\ax/4,3*\ay/4)    	-- 	(\ax/4,3*\ay/4);
\draw[thick] 	(-2*\ax/4,\ay/4)    	-- 	(2*\ax/4,\ay/4);
\draw[thick] 	(-3*\ax/4,-\ay/4)    	-- 	(3*\ax/4,-\ay/4);
\draw[thick] 	(-4*\ax/4,-3*\ay/4)    	-- 	(4*\ax/4,-3*\ay/4);

\draw[thick] 	(0,3*\ay/4)    	-- 	(0,1*\ay/4);
\draw[thick] 	(-\ax/4,1*\ay/4)    	-- 	(-\ax/4,-1*\ay/4);
\draw[thick] 	(\ax/4,1*\ay/4)    	-- 	(\ax/4,-1*\ay/4);
\draw[thick] 	(0,-\ay/4)    	-- 	(0,-3*\ay/4);
\draw[thick] 	(-\ax/2,-\ay/4)    	-- 	(-\ax/2,-3*\ay/4);
\draw[thick] 	(\ax/2,-\ay/4)    	-- 	(\ax/2,-3*\ay/4);
\draw[thick] 	(-\ax/4,-3*\ay/4)    	-- 	(-\ax/4,-5*\ay/4);
\draw[thick] 	(\ax/4,-3*\ay/4)    	-- 	(\ax/4,-5*\ay/4);
\draw[thick] 	(-3*\ax/4,-3*\ay/4)    	-- 	(-3*\ax/4,-5*\ay/4);
\draw[thick] 	(3*\ax/4,-3*\ay/4)    	-- 	(3*\ax/4,-5*\ay/4);
%\draw[dotted]        (2,0) -- (10,0);

\end{tikzpicture}
        }
        }
\caption{Wasserstein barycenters with entropic smoothing}
\label{fig_wasserstein_triangle}
\end{figure}

%%%%%%%%%%%%%%%%%%%%%%%%%%%%%%%%%%%%%%%%%%
\begin{figure}
 \centering
  \CompressedVersion{
 	\includegraphics{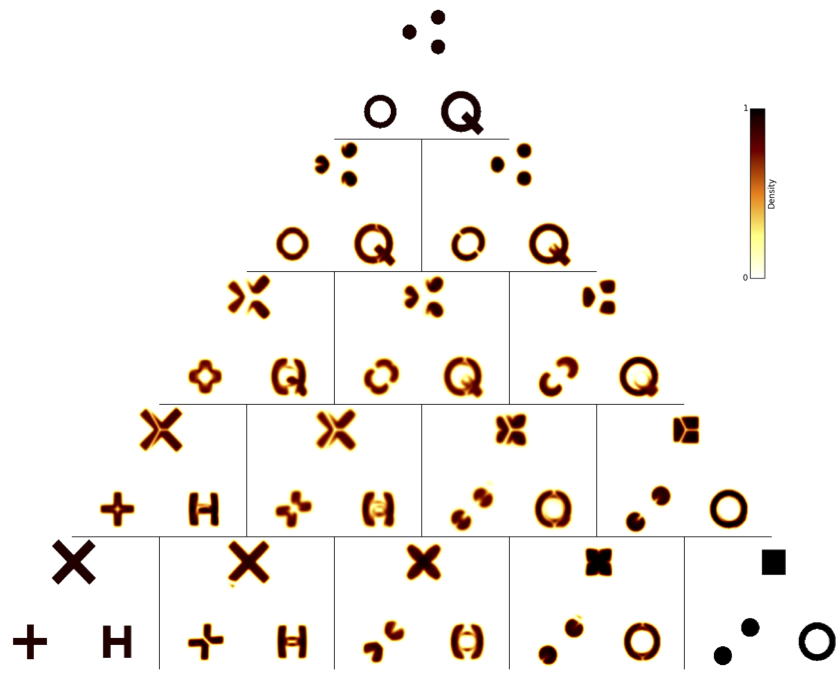}
 }{
  \resizebox{.9\linewidth}{!}{
\begin{tikzpicture}%y,x
\node (1) at (-\ax,-\ay) {\includegraphics[clip,trim=\b cm \b cm \b cm \b cm]{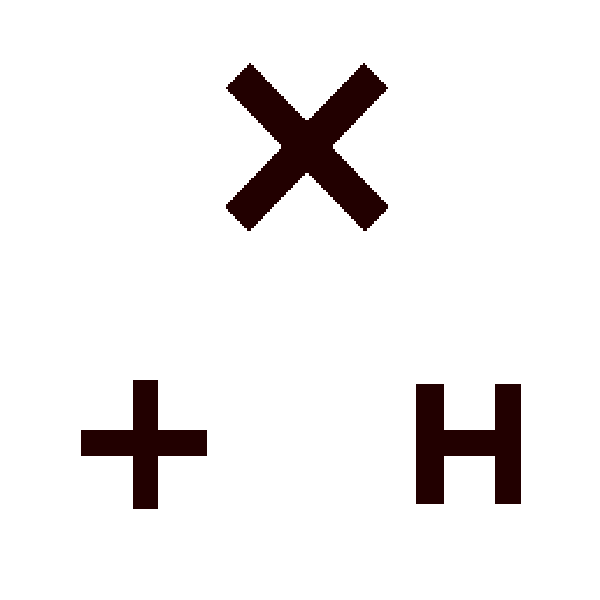}};
\node (2) at (0,\ay) {\includegraphics[clip,trim=\b cm \b cm \b cm \b cm]{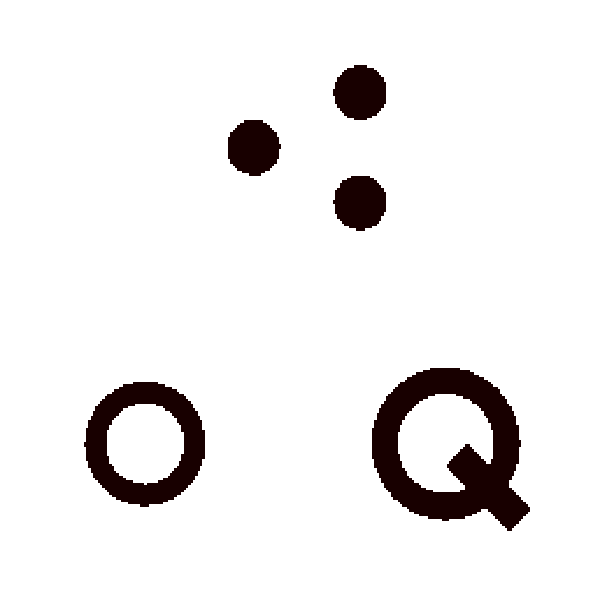}};
\node (3) at (\ax,-\ay) {\includegraphics[clip,trim=\b cm \b cm \b cm \b cm]{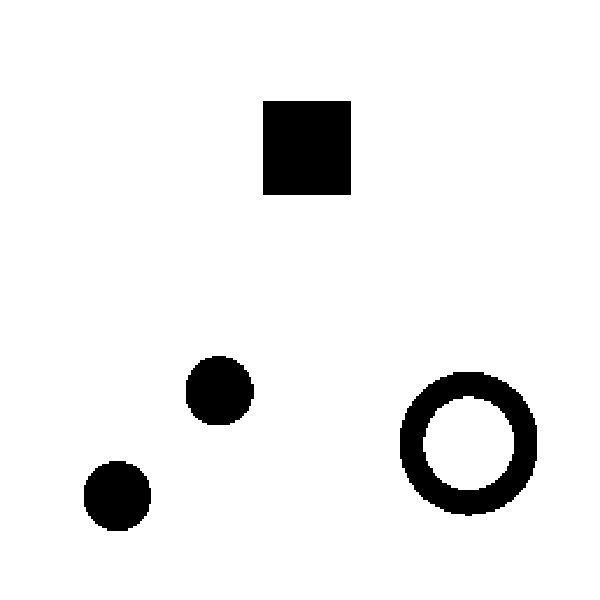}};

\node (4) at (-\ax/2,-\ay) {\includegraphics[clip,trim=\b cm \b cm \b cm \b cm]{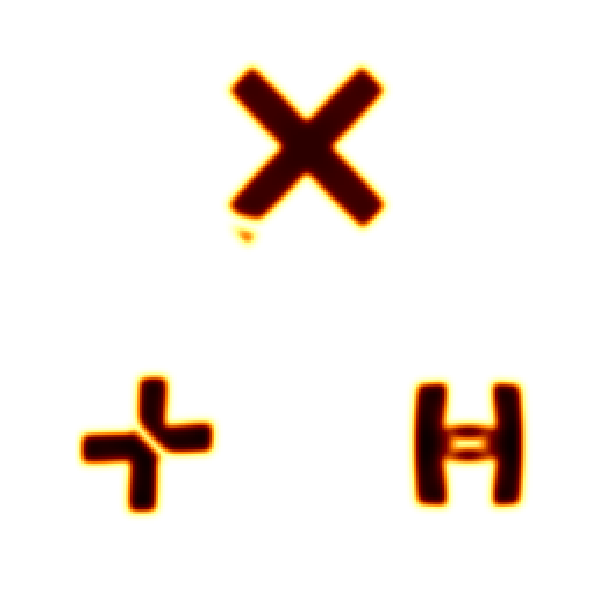}};
\node (5) at (0,-\ay) {\includegraphics[clip,trim=\b cm \b cm \b cm \b cm]{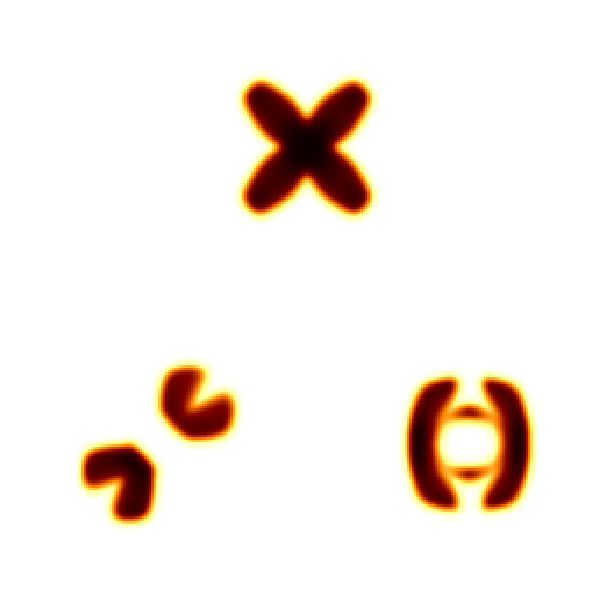}};
\node (6) at (\ax/2,-\ay) {\includegraphics[clip,trim=\b cm \b cm \b cm \b cm]{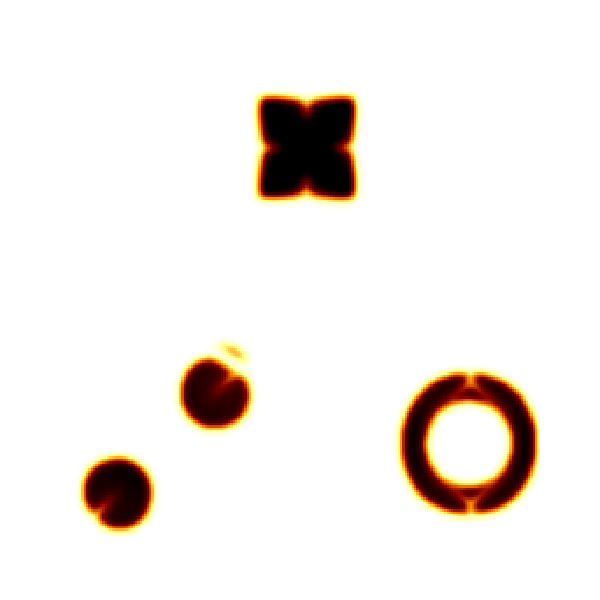}};

\node (7) at (-3*\ax/4,-2*\ay/4) {\includegraphics[clip,trim=\b cm \b cm \b cm \b cm]{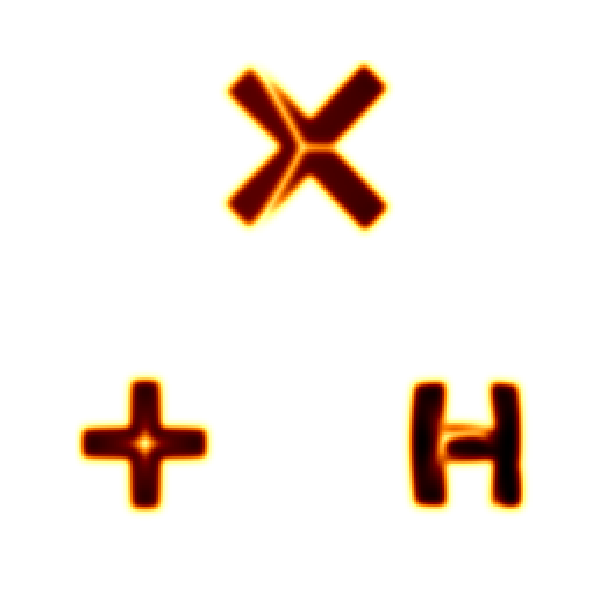}};
\node (8) at (-\ax/4,-2*\ay/4) {\includegraphics[clip,trim=\b cm \b cm \b cm \b cm]{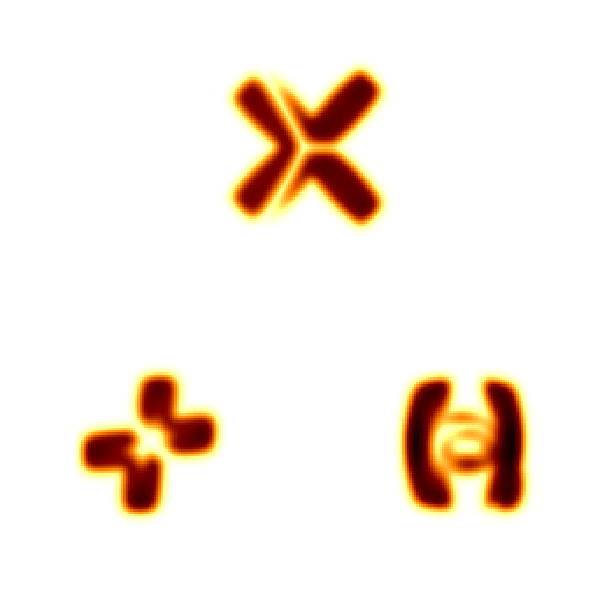}};
\node (9) at (\ax/4,-2*\ay/4) {\includegraphics[clip,trim=\b cm \b cm \b cm \b cm]{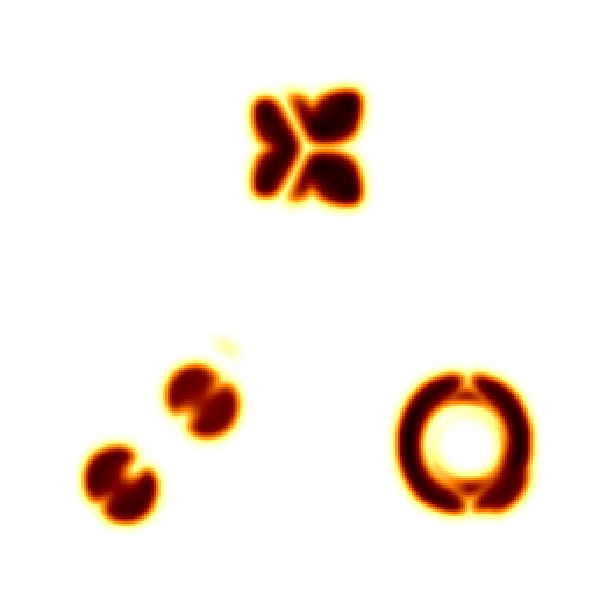}};
\node (10) at (3*\ax/4,-2*\ay/4) {\includegraphics[clip,trim=\b cm \b cm \b cm \b cm]{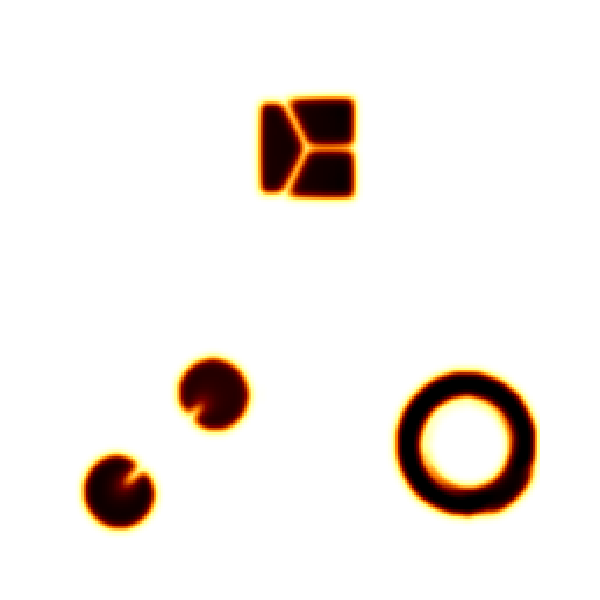}};

\node (11) at (-2*\ax/4,0) {\includegraphics[clip,trim=\b cm \b cm \b cm \b cm]{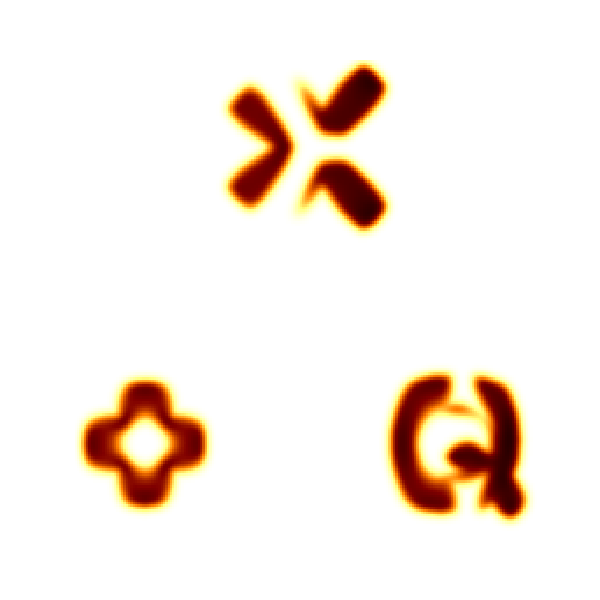}};
\node (12) at (0,0) {\includegraphics[clip,trim=\b cm \b cm \b cm \b cm]{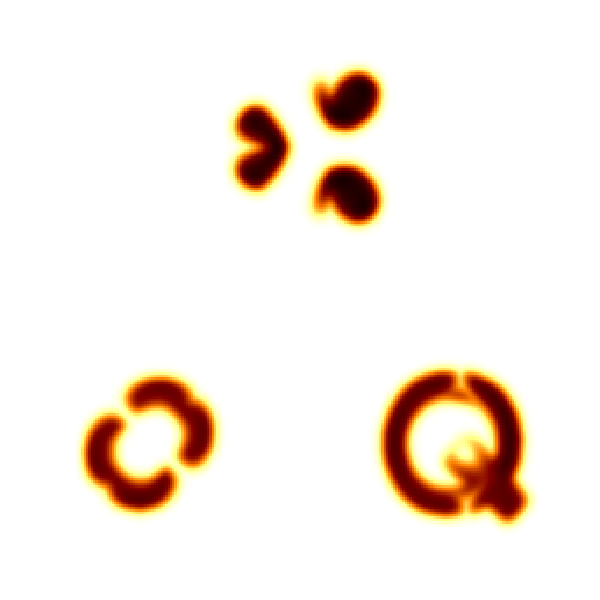}};
\node (13) at (2*\ax/4,0) {\includegraphics[clip,trim=\b cm \b cm \b cm \b cm]{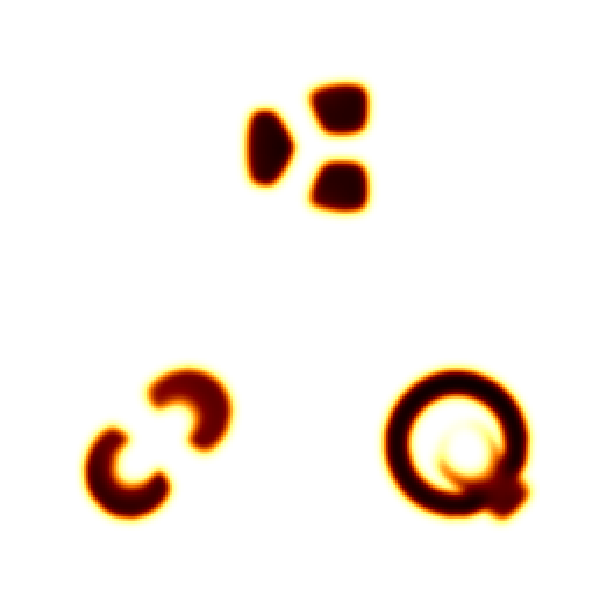}};

\node (14) at (-\ax/4,\ay/2) {\includegraphics[clip,trim=\b cm \b cm \b cm \b cm]{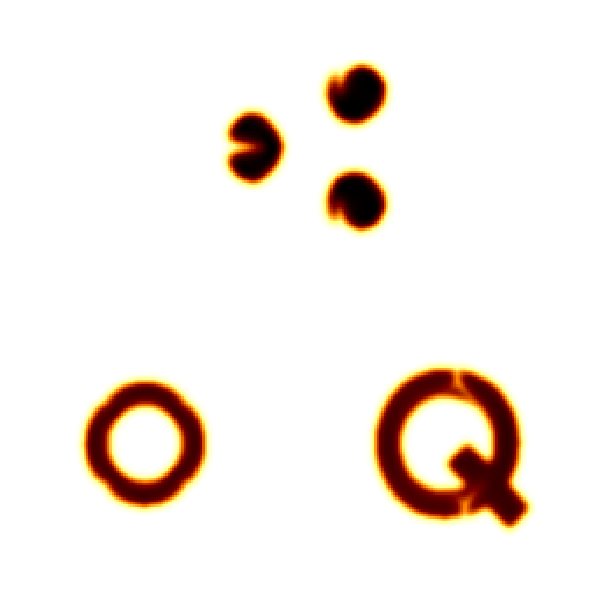}};
\node (15) at (\ax/4,\ay/2) {\includegraphics[clip,trim=\b cm \b cm \b cm \b cm]{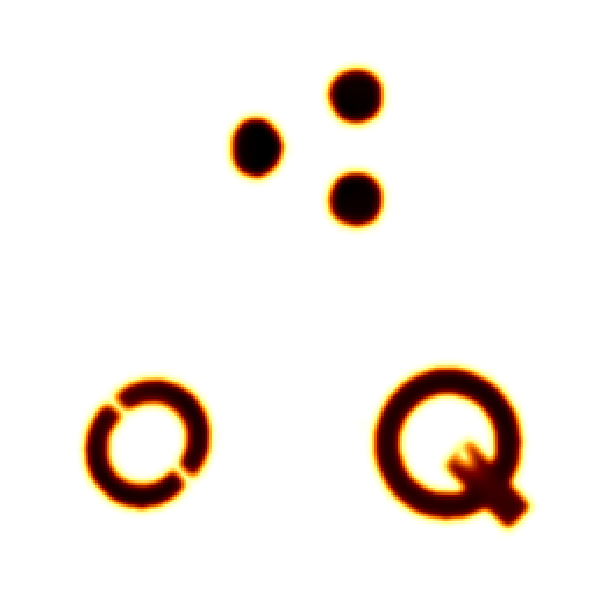}};

\node (15) at (\ax,\ay/2) {\includegraphics[scale=2.5]{triangleKL2/cmapbary}};

\draw[thick] 	(-\ax/4,3*\ay/4)    	-- 	(\ax/4,3*\ay/4);
\draw[thick] 	(-2*\ax/4,\ay/4)    	-- 	(2*\ax/4,\ay/4);
\draw[thick] 	(-3*\ax/4,-\ay/4)    	-- 	(3*\ax/4,-\ay/4);
\draw[thick] 	(-4*\ax/4,-3*\ay/4)    	-- 	(4*\ax/4,-3*\ay/4);

\draw[thick] 	(0,3*\ay/4)    	-- 	(0,1*\ay/4);
\draw[thick] 	(-\ax/4,1*\ay/4)    	-- 	(-\ax/4,-1*\ay/4);
\draw[thick] 	(\ax/4,1*\ay/4)    	-- 	(\ax/4,-1*\ay/4);
\draw[thick] 	(0,-\ay/4)    	-- 	(0,-3*\ay/4);
\draw[thick] 	(-\ax/2,-\ay/4)    	-- 	(-\ax/2,-3*\ay/4);
\draw[thick] 	(\ax/2,-\ay/4)    	-- 	(\ax/2,-3*\ay/4);
\draw[thick] 	(-\ax/4,-3*\ay/4)    	-- 	(-\ax/4,-5*\ay/4);
\draw[thick] 	(\ax/4,-3*\ay/4)    	-- 	(\ax/4,-5*\ay/4);
\draw[thick] 	(-3*\ax/4,-3*\ay/4)    	-- 	(-3*\ax/4,-5*\ay/4);
\draw[thick] 	(3*\ax/4,-3*\ay/4)    	-- 	(3*\ax/4,-5*\ay/4);
\end{tikzpicture}
        }
       }
\caption{$\GHK$ barycenters with entropic smoothing (the definition of the $\GHK$ metric is similar to Wasserstein but the marginal constraints are replaced by $\KL$ divergences, see Section  \ref{subsec_unbalanced}).}
\label{fig_GHK_triangle}
\end{figure}
%%%%%%%%%%%%%%%%%%%%%%%%%%%%%%%%%%%%%%%%%%
% !TEX root = ../EntropicNumeric.tex

\subsection{Gradient Flows and Evolution of Densities}
\label{sec_applicationGF}

The basic framework of gradient flows has been briefly laid out in Section \ref{subsec_gradientflows}. This Section details the application of Algorithm \ref{algo_scaling_stabilized} for solving them.
As the transition from the measures formulation to the density formulation and further to the algorithm with pointwise optimality conditions was carefully detailed in Sections \ref{sec_appli_UOT} and \ref{sec_appli_bary}, we skip some of these intermediate steps here.

\subsubsection{In the Wasserstein Space}
Scaling algorithms for solving Wasserstein gradient flows are not new~\cite{2015-Peyre-siims}, but our framework allows to simplify the derivation of the algorithm and the stabilized Algorithm \ref{algo_scaling_stabilized} allows to use much smaller regularization parameter $\epsilon$ yielding sharper and more precise flows. Given a convex, lower semicontinuous function on measures $\mathcal{G}$ with compact sublevel sets, each step requires to find the minimizer of
\eq{
\min_{\gamma \in \Mm_+(X\times X)} \langle c, \gamma\rangle + \iota_{\{=\}}(P^1_\# \gamma| \mu_k^\tau) + 2\, \tau \, \mathcal{G}(P^2_\# \gamma)
}
where $c :(x,y)\mapsto |y-x|^2$ is the quadratic cost. This directly fits in our framework by choosing $F_1(s) = \iota_{\{=\}}(s\, \d x| \mu_k^\tau)$ and $F_2(s) = 2 \, \tau \, \mathcal{G}(s\,\d x)$. 
\begin{example}
If the energy is given by the relative entropy $\mathcal{G}=\KLm(\cdot|p \, \d x)$ for some reference measure $p \, \d x$, then the proximal step for $F_2$ is given by 
\eq{
\prox^{\KL}_{\frac1\epsilon F_2}(q) = \argmin_{s : X \to \RR} \left( \epsilon \KL(s|q) + 2\tau \KL(s|p)\right) =q^\frac{\epsilon}{\epsilon+2\tau}\cdot p^\frac{2\tau}{\epsilon+2\tau}\, .
}
This problem is identical to an unbalanced transport problem where one of the marginal is fixed and the other is controlled by a $\KLm$ divergence. The difference is, that we want to solve a whole sequence of such problems, each time taking the second marginal of the optimal coupling and plugging it into the next problem as constraint for the first marginal. The gradient flow associated to this example allows to recover the heat flow when $p\,\d x$ is the Lebesgue measure on $X=\RR^d$.
\end{example}
\begin{example}
Some models of crowd motion with congestion \cite{roudneff2011handling} can be approximatively simulated by computing a sequence of measures $(\mu^\tau_{k})$ from an initial measure $\mu_0$, where for all $k\in \NN$, $\mu^\tau_{2k+1}$ is obtained from $\mu^\tau_{2k}$ by performing a free evolution during a time $\tau$ (this requires another algorithm) and then defining $\mu^\tau_{2k+2}$ as the Wasserstein projection of $\mu_{2k+1}^\tau$ (which we write as $p_{2k+1}\,\d x$) onto the set of measures with densities smaller than $1$. The second step can be performed by setting $F_1(s) = \iota_{\{=\}}(s|p_{2k+1})$ and $F_2(s)=\iota_{\leq 1}(s)$. The proximal operator of $F_2$ at a point $s$ is given by $\min \{1,s\}$ and thus $p_{2k+2}$ can be obtained from $p_{2k+1}$ with the stabilized Algorithm \ref{algo_scaling_stabilized} with 
\eq{
\proxdiv_{F_1}(s,u,\epsilon) = \tfrac{p_{2k+1}}{s}\,,
\qandq
\proxdiv_{F_2}(s,u,\epsilon) = \min \{\textstyle\frac1s , e^{-u/\epsilon}\}\, .
}
\end{example}

%%%%%%%%%%%%%%%%%%%%%%%%%%%%%%%%%%%%%%%%%%%%%%%%
\subsubsection[WF Gradient Flows]{$\WF$ Gradient Flows}
For $\WF$ gradient flows, each step requires to solve
\eq{
\inf_{\substack{\mu \in \Mm_+(X) \\ \gamma \in \Mm_+(X\times X)}} \langle c, \gamma \rangle + \KLm(P^1_\# \gamma | \mu_k^\tau) +  \KLm(P^2_\# \gamma | \mu) + 2 \tau \mathcal{G}(\mu)
}
where $c$ is the cost given in \eqref{logcost}. By denoting $p_k$ the density of $\mu_k^\tau$ with respect to $\d x$, one defines the functions
\eq{
F_1(s) = \KL(s|p_k) \qandq
F_2(s) = \inf_{p\in \Lun(X)} \KL(s|p) + 2\tau \, G(p)
}
where $G(p) \eqdef \mathcal{G}(p\d x)$.
If $G$ is an integral functional of the form $G(s) = \int_X g_x(s(x)) \d x$, the proximal operator is given pointwise by
\eq{
\inf_{(\tilde{s},p)\in \RR^2} \epsilon \KL(\tilde{s}|s)+ \KL(\tilde{s}|p) + 2\tau \, g_x(p)
}
for which the first order optimality conditions read
\eq{
\begin{cases}
0 = \epsilon \log (\tilde{s}/s) + \log(\tilde{s}/p) &\\
(\tilde{s}/p-1)/(2\tau)\in \partial g_x(p)
\end{cases}
}
and $\tilde{s}=0$ if $s=0$ or $p=0$. In many cases, this system can be easily solved, as in the following example.
% Example 1
\begin{example}
\label{exemple_GFtumor}
%With the modelization of tumor growth in mind we introduce the functional 
One of the simplest functional which generates non-trivial gradient flows is
\eq{
\mathcal{G}(\mu)=-\alpha \cdot \mu(X) + \iota_{\leq 1}({\d \mu}/{\d x})
}
where $\alpha \in ]0,\infty[$. Since the distance $\WF$ measures both the displacement and the rate of growth, one can interpret the gradient flow of $\mathcal{G}$ as describing the evolution of a density of cells (a tumor, say) which have a tendency to multiply---hence increase the total mass---but which density cannot exceed $1$.
 One can solve the time discretized gradient flow with Algorithm \ref{algo_scaling_stabilized} by choosing $F_2(s) =  \inf_{p} \KL(s|p) - 2\alpha\tau \, p + \iota_{\leq 1}(p)$. With the optimality conditions above, one obtains
%\eq{
%\prox^{\KL}_{\frac{1}{\epsilon}F_2}(s) = 
%\begin{cases}
%s\cdot (1-2\tau \alpha)^{-\frac{1}{\epsilon}} & \text{if $s\leq (1-2\tau \alpha)^\frac{1+\epsilon}{\epsilon}$} \\
%s^{\frac{\epsilon}{1+\epsilon}} &\text{otherwise.}
%\end{cases}
%}
\eq{
\proxdiv_{F_2}(s,u,\epsilon) = 
\begin{cases}
(e^u(1-2\tau\,\alpha))^{-\frac{1}{\epsilon}} & \text{if $s\leq e^{\frac{u}{\epsilon}}(1-2\tau \alpha)^\frac{1+\epsilon}{\epsilon}$} \\
(s\, e^u)^{-\frac{1}{1+\epsilon}} &\text{otherwise.}
\end{cases}
}
Moreover, given an optimal coupling $r$, one has $\mu_{k+1}^\tau = \min \left(\frac{P^2_\# r}{1-2\tau \alpha}, 1 \right)\d x$.
\end{example}
A numerical illustration is given in Figure \ref{fig_WFGF} where we used Algorithm \ref{algo_scaling_stabilized} (stopped after $500$ iterations) for solving each step , with the following parameters: $X$ is the segment $[0,1]$ discretized into $3000$ uniformly spaced samples, the initial density $p_0$ is the black line on Figure \ref{fig_WFGFprofile}, $\tau= 0.006$, $\alpha=1$ and $\epsilon = 10^{-8}$. The running time was $315$ seconds. Remark how, by using a very small value for $\epsilon$, the ``smoothing'' effect of the entropy disappears: the contours of the free-boundary which evolve with time remain sharp.

\begin{figure}
 \centering
\begin{subfigure}{1.\linewidth}
\centering
 \resizebox{.9\linewidth}{!}{
\includegraphics[clip,trim=0cm 0cm 15.5cm 0cm]{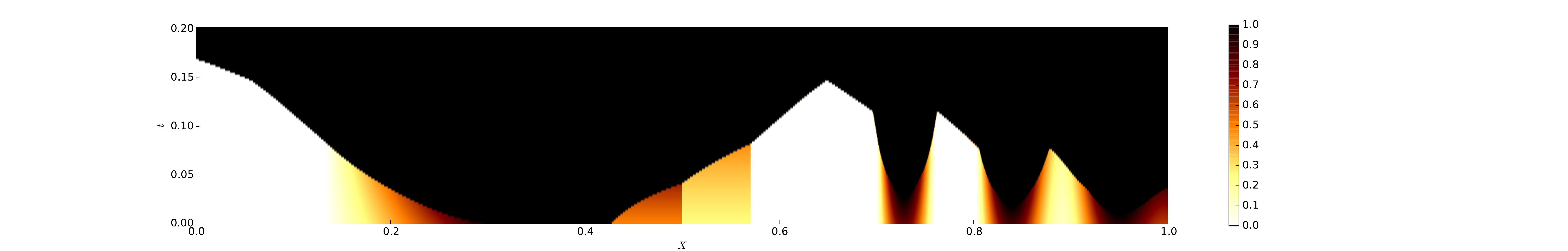}
}%
 \resizebox{.09\linewidth}{!}{
\includegraphics[clip,trim=1.5cm 2cm 1.5cm 1cm]{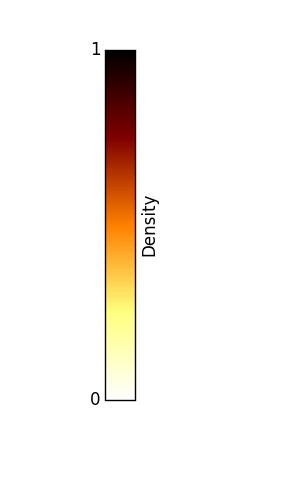}
}%
\caption{Evolution in color scale: time evolves from bottom to top.}\label{fig_WFGFtop}
\end{subfigure}
 \begin{subfigure}{.9\linewidth}
\centering
 \resizebox{1.\linewidth}{!}{
\includegraphics[clip,trim=0cm 0cm 0cm 0cm]{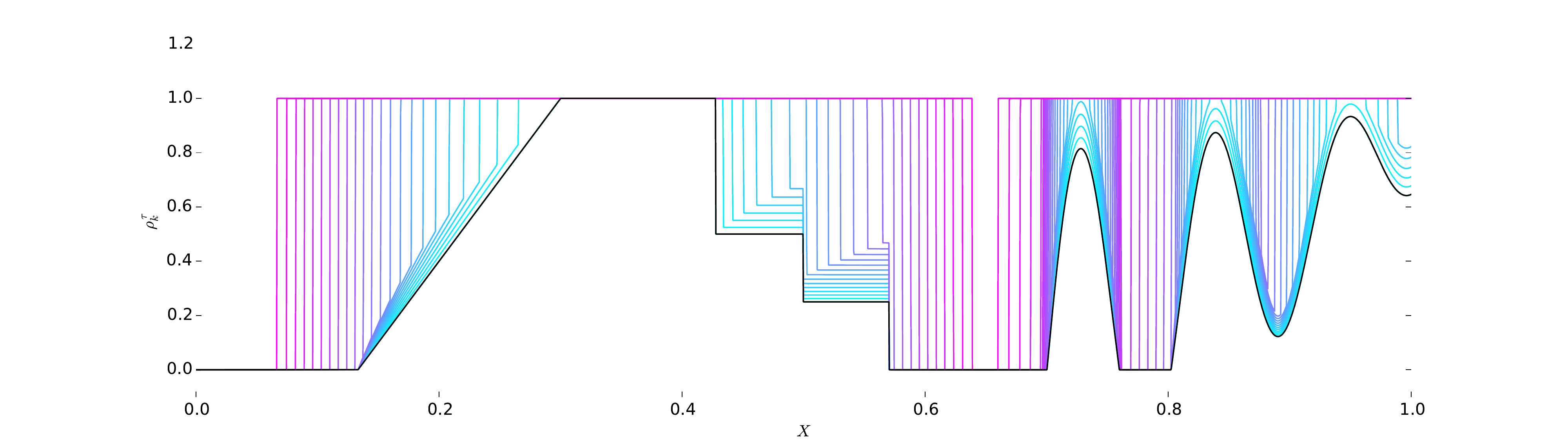}
}%
\caption{Evolution viewed ``laterally'' : the initial density $p_0$ is the black line, then the density is displayed at every time step with a color ranging from blue (small times) to pink (bigger times).}\label{fig_WFGFprofile}
\end{subfigure}
\caption{Evolution of the density with respect to time for the growth model of Example \ref{exemple_GFtumor}.}
\label{fig_WFGF}
\end{figure}

\subsubsection[WF Gradient Flows with Multiple Species]{$\WF$ Gradient Flows with Multiple Species}
The generic form \eqref{eq-general} also includes gradient flows with multiple species with a mutual interaction (with $n>1$, similar to the barycenter problem). Such systems have been theoretically studied in \cite{laborde2015some,zinsl2015transport}. Here we consider this class of problems in order to illustrate the versatility of the algorithm and it is not our purpose to make a link with the theory of PDEs.
Let us consider a simple example which is a direct extension of Example \ref{exemple_GFtumor}. Consider the following functional:
\eq{
\mathcal{G}(\mu^a,\mu^b) = -\alpha\cdot \mu^a -\alpha\cdot\mu^b + \iota_{\leq1}(\d (\mu^a+\mu^b)/\d x))\, .
}
In this model, one has two species which have a tendency to grow in mass (with the same incentive $\alpha>0$, for simplicity of the algorithm), and their sum cannot exceed the reference measure. The corresponding $F_2$ is given by 
\eq{
F_2(s^a,s^b) =\!\!\!\! \inf_{(r^a,r^b)\in \Lun(X)^2}\!\!\!\! \KL(s^a|r^a) +  \KL(s^b|r^b) -2\alpha\tau \int_X (r^a+r^b)\d x + \iota_{\leq1}(r^a+r^b)\, .
}
The optimality conditions yield  
\eq{
\proxdiv_{F_2}(s,u,\epsilon) = (e^{-\frac{u^a}{\epsilon}},e^{-\frac{u^b}{\epsilon}})/\beta(s^a\,e^{-\frac{u^a}{\epsilon}}, s^b\,e^{-\frac{u^b}{\epsilon}})
}
where $\beta(x,y) \eqdef \max \left\{ (x+y)^{\frac1{1+\epsilon}}, \, (1-2\tau\alpha)^{\frac1\epsilon} \right\}$. Morevoer, given an optimal pair of couplings $r^a, r^b$, the next densities are given by $(p^a_{k+1},p^b_{k+1}) = (P^2_\# r^a, P^2_\# r^b) / \beta(P^2_\# r^a, P^2_\# r^b)$.
Remark than in this model, as in Example \ref{exemple_GFtumor}, if the domain $X$ is compact and the initial densities are not null, a steady state is reached in finite time, where the sum of the two densities is constant and equal to $1$.

%It is interesting to observe how the densities at the steady state are related to the input densities: an illustration
Some initial densities and the associated final steady state are shown on Figure \ref{fig_WFGF2}. For this illustration, we started with input densities $p^a$ and $p^b$ on the segment $[0,1]$ discretized into $3000$ uniform samples, as displayed on Figure \ref{fig_GFWF2_input} (where the red density $p^b$ is layered over $p^a$). We computed the evolution with Algorithm \ref{algo_scaling_stabilized} for solving each step of the discretized gradient flow, with the parameters $\tau=0.004$, $\alpha=1$ and $\epsilon=10^{-7}$.

Note that although the incentive of growing mass $\alpha$ is the same for the two species, the resulting interaction is non trivial: for instance the small amount of blue mass is pushed to the right by the action of the expanding red mass. This behavior is explained by the fact that for the $\WF$ metric, it requires less effort (i.e.\ the distance is smaller) to add a given amount of mass to a high density than to a small one.
%%%%%%%%%%%%%%%%%%%%%%%%%%%
\begin{figure}
\begin{subfigure}{.49\linewidth}
\centering
 \resizebox{1.\linewidth}{!}{
\includegraphics[clip,trim=3cm 0cm 2cm 0cm]{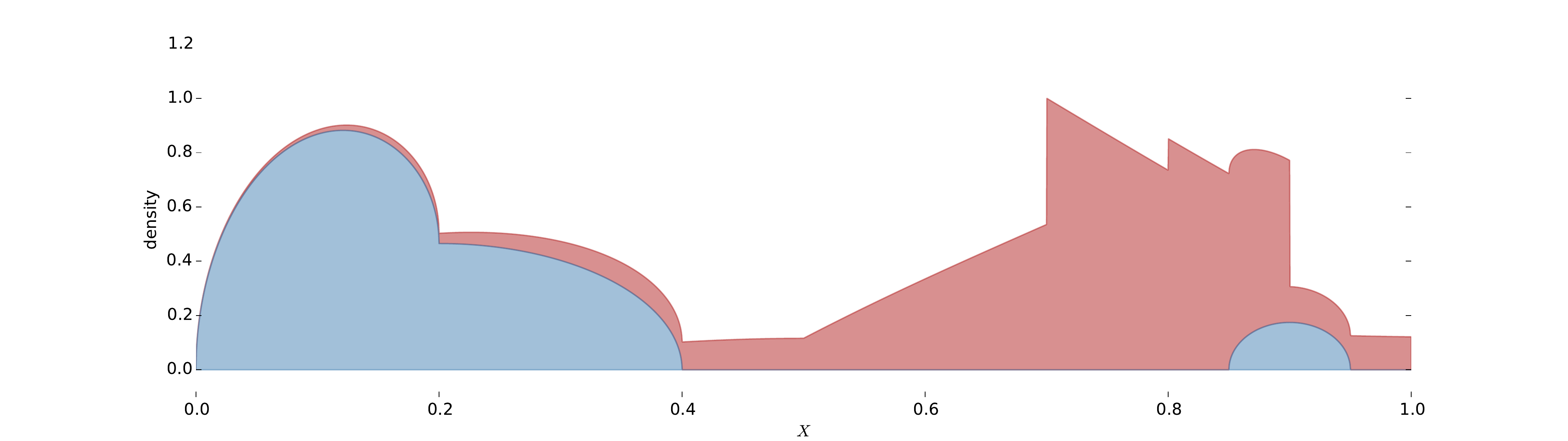}
}%
\caption{Initial state (blue) $p_0^a$ (red)$p_0^b$}\label{fig_GFWF2_input}
\end{subfigure}
 \begin{subfigure}{.49\linewidth}
\centering
 \resizebox{1.\linewidth}{!}{
\includegraphics[clip,trim=2cm 0cm 3cm 0cm]{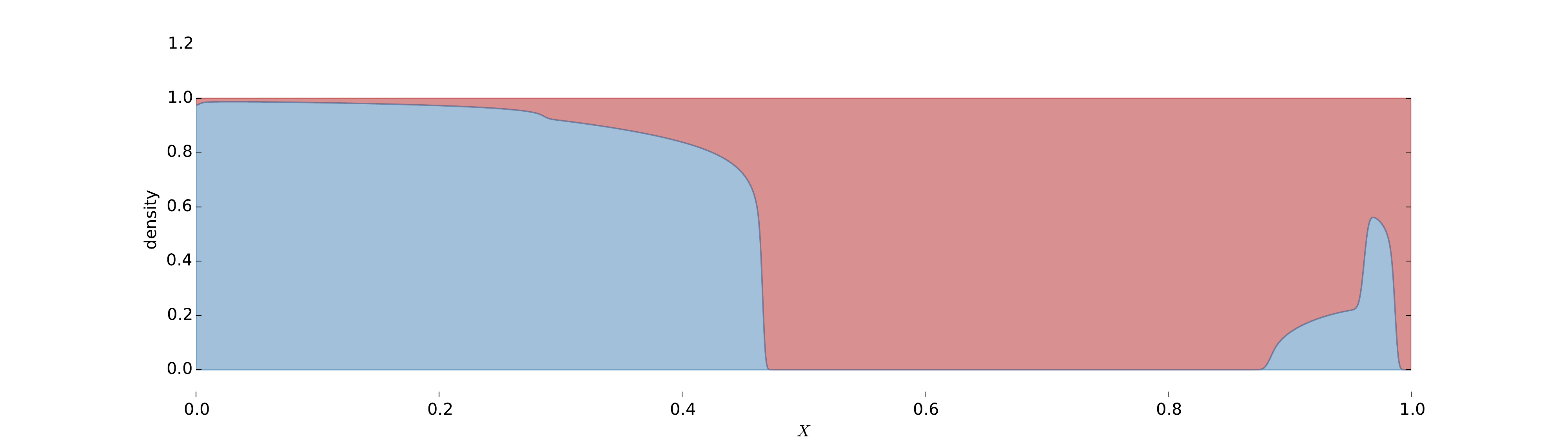}
}
\caption{Steady state densities (for $t$ big)}
\end{subfigure}
\begin{subfigure}{.49\linewidth}
\centering
\resizebox{.09\linewidth}{!}{
\includegraphics[clip,trim=2cm 0cm 2cm 0cm]{GF/cmapbary}
}%
 \resizebox{.9\linewidth}{!}{
\includegraphics[clip,trim=1.5cm 3cm 1cm 3cm]{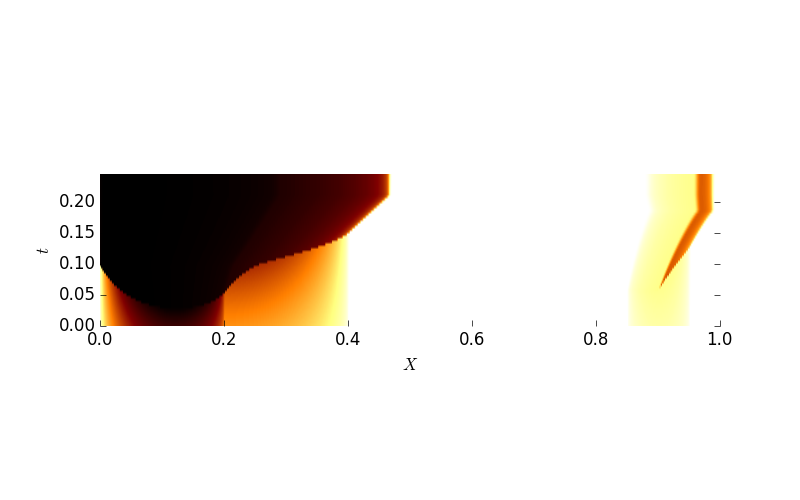}
}%
\caption{Evolution of $p^a$}
\end{subfigure}
\begin{subfigure}{.49\linewidth}
\centering
 \resizebox{.9\linewidth}{!}{
\includegraphics[clip,trim=1cm 3cm 1.5cm 3cm]{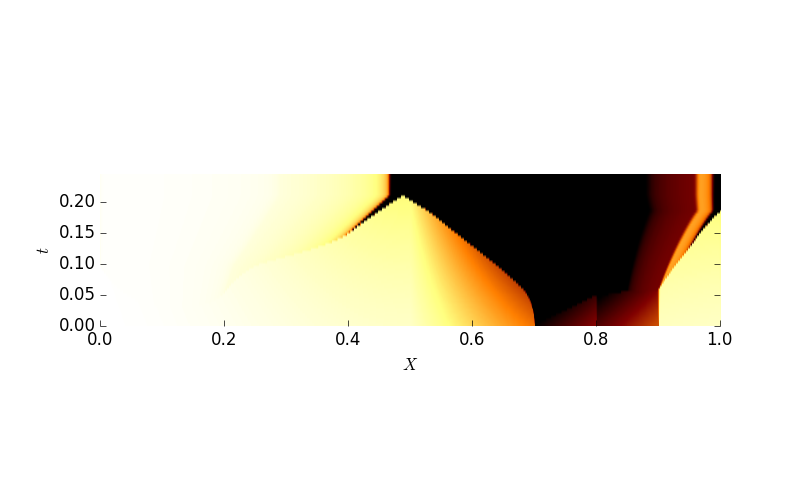}
}%
 \resizebox{.09\linewidth}{!}{
\includegraphics[clip,trim=2cm 0cm 2cm 0cm]{GF/cmapbary}
}
\caption{Evolution of $p^b$}
\end{subfigure}
\caption{(top row) initial state and steady state (density $p^b$ is layered over $p^a$) (bottom row) time evolution (time evolves from bottom to top) with density in color scale.}
\label{fig_WFGF2}
\end{figure}

% !TEX root = ../EntropicNumeric.tex

%\section*{Conclusion}

%TODO.

%%%%
\section*{Acknowledgements}
The work of Bernhard Schmitzer has been supported by the French National Research Agency (ANR) as part of the `Investissements d'avenir', program-reference ANR-10-LABX-0098 via the Fondation Sciences Math\'ematiques de Paris.
The work of Bernhard Schmitzer and Gabriel Peyr\'e has been supported by the European Research Council (ERC project SIGMA-Vision).

\appendix
% !TEX root = ../EntropicNumeric.tex
\section{Appendix}

\subsection{Reminders on Convex Analysis}
\label{sec:ApxConvex}
Let $E$ and $E^*$ be \textit{topologically paired} vector spaces i.e.\ vector spaces assigned with locally convex Hausdorff topology such that all continuous linear functionals on each space can be identified with the elements of the other. The pairing between those spaces is the bilinear form $\langle \cdot , \cdot \rangle : E \times E^* \to \RR$. The convex conjugate of a function $f:E\to \RR \cup \{+\infty\}$ is defined for each $y\in E^*$ by
\eq{
f^*(y) \eqdef \sup_{x\in E}\; \langle x, y \rangle - f(x) \, .
}
The subdifferential operator is defined at a point $x\in E$ as
\eq{
\partial f (x) \eqdef \left\{ y \in E^* ; f(x')-f(x) \geq \langle y, x'-x \rangle \text{ for all $x'\in E$}\right\}
}
and is empty if $f(x)=\infty$. Those definitions admit their natural counterparts for functions defined on $E^*$. %The pairing at stake in the following is that between the Banach space $\Lun (T)$ with the strong topology and $\Linf(T)$ with the weak* topology.

\begin{theorem}[Fenchel-Rockafellar \cite{rockafellar1967duality}]
\label{thm_FR}
Let $(E,E^*)$ and $(F,F^*)$ be two couples of topologically paired spaces. Let $A : E \to F$ be a continuous linear operator and $A^*:F^* \to E^*$ its adjoint. Let $f$ and $g$ be lower semicontinuous and proper convex functions defined on $E$ and $F$ respectively. If there exists $x \in \dom f$ such that $g$ is continuous at $Ax$, then
\eq{
\sup_{x \in E} - f(-x) - g(Ax) = \min_{y^* \in F^*} f^*(A^*y^*) + g^*(y^*)
}
and the $\min$ is attained. Moreover, if there exists a maximizer $x\in E$ then there exists $y^*\in F^*$ satisfying $Ax \in \partial g^*(y^*)$ and $A^*y^* \in \partial f(-x)$.
\end{theorem}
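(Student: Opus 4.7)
The plan is to follow the classical scheme for Fenchel duality: establish weak duality by the Fenchel--Young inequality, upgrade to strong duality by a perturbation/separation argument that exploits the qualification hypothesis (existence of $x \in \dom f$ with $g$ continuous at $Ax$), and finally read off the extremality relations from the saturated Fenchel--Young inequalities. Since this is a classical result due to Rockafellar, the plan is really to reproduce the standard argument and make sure the locally convex/topological pairing setting is handled correctly.

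First I would prove weak duality $\sup \leq \inf$. For any $x \in E$ and $y^* \in F^*$, the Fenchel--Young inequality gives $f(-x) + f^*(A^* y^*) \geq \langle -x, A^* y^* \rangle = -\langle Ax, y^* \rangle$ and $g(Ax) + g^*(y^*) \geq \langle Ax, y^* \rangle$; adding these and rearranging yields $-f(-x) - g(Ax) \leq f^*(A^* y^*) + g^*(y^*)$, so taking $\sup$ on the left and $\inf$ on the right gives weak duality. In particular both sides are well-defined in $\RR \cup \{\pm\infty\}$.

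The main work is strong duality together with attainment on the dual side. My plan is to use the perturbation function $p : F \to \RR \cup \{\pm\infty\}$ defined by
\[
p(u) = \inf_{x \in E} \bigl( f(-x) + g(Ax + u) \bigr).
\]
This $p$ is convex, and its conjugate is $p^*(y^*) = f^*(A^* y^*) + g^*(y^*)$ (a direct computation), so the dual problem is exactly $\inf_{y^*} p^*(y^*)$ and the primal value is $-p(0)$. The qualification hypothesis is used to show that $p$ is bounded above on a neighborhood of $0$: picking the $x_0 \in \dom f$ such that $g$ is continuous at $A x_0$, there is an open neighborhood $V$ of $0$ in $F$ on which $u \mapsto g(A x_0 + u)$ is bounded, so $p(u) \leq f(-x_0) + \sup_{u \in V} g(A x_0 + u) < \infty$ for $u \in V$. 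A standard convex-analytic fact in locally convex spaces then says that a convex function bounded above on a neighborhood of an interior point of its domain is continuous there, hence its subdifferential at $0$ is nonempty. Any $y^* \in \partial p(0)$ satisfies $p(0) + p^*(y^*) = 0$ and realizes the dual minimum, giving both $\min = \sup$ and attainment. The main obstacle is purely technical: making sure the local boundedness/subdifferentiability lemma is justified in the general locally convex setting (rather than Banach), which is the step most sensitive to hypotheses.

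Finally, if the primal admits a maximizer $x$ and $y^*$ is a dual minimizer produced above, both Fenchel--Young inequalities used for weak duality must be equalities: $f(-x) + f^*(A^* y^*) = \langle -x, A^* y^* \rangle$ and $g(Ax) + g^*(y^*) = \langle Ax, y^* \rangle$. Equality in Fenchel--Young is precisely the statement $A^* y^* \in \partial f(-x)$ and (by conjugate symmetry, using $g^{**} = g$ since $g$ is lsc convex proper) $Ax \in \partial g^*(y^*)$, which is the last claim.
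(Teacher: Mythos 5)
The paper does not prove this theorem: it appears in Appendix A.1 as a quoted classical result with a citation to Rockafellar's 1967 duality paper, so there is no in-paper argument to compare yours against. Your proof is the standard perturbation-function argument (weak duality from Fenchel--Young; $p(u)=\inf_x(f(-x)+g(Ax+u))$ with $p^*(y^*)=f^*(A^*y^*)+g^*(y^*)$; the qualification forces $p$ to be bounded above near $0$, hence continuous and subdifferentiable there, which yields strong duality with dual attainment; extremality relations from saturated Fenchel--Young inequalities), and it is essentially correct. Two small points deserve care. First, the sign bookkeeping in the qualification step: your bound $p(u)\leq f(-x_0)+g(Ax_0+u)$ requires $-x_0\in\dom f$ while the hypothesis as stated gives $x_0\in\dom f$; this mismatch is already present in the paper's transcription of the theorem (the primal is written with $f(-x)$), and you should either reinterpret the hypothesis accordingly or substitute $x\mapsto -x$ consistently. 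Second, the subdifferentiability conclusion $\partial p(0)\neq\emptyset$ presupposes $p(0)>-\infty$; the degenerate case $p(0)=-\infty$ should be dispatched separately (there $p^*\equiv+\infty$, both sides equal $+\infty$, and the claim is vacuous). Neither issue affects the substance of the argument.
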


\subsection{Properties of Divergence Functionals}
\label{sec:ApxDivergences}

Here we collect a few results on divergences functionals when they are defined on functions as in \eqref{eq_divergencefunctions} (as opposed to Section \ref{sec_divergencefunc} where they are defined between measures).
\begin{proposition}\label{prop_divergnormalint}
Let $\varphi$ be a nonnegative entropy function as in Definition \ref{def_entropy} and $(X,\d x)$ a measured space. Then $(u,v)\in \Lun(X)^2 \mapsto \Diverg_\phi(u|v)$ is an admissible integral functional (in the sense of Definition \ref{def_integralfunctional}) which is positively $1$-homogeneous, convex and weakly lower semicontinuous. Moreover, $\Diverg_\phi^*= \iota_{B_\phi}$ with $B_\phi = \{(a,b)\in\RR^2 ; b \leq -\phi^*(a)\}$.
\end{proposition}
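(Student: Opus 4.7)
The plan is to recognize $\ol{\Diverg}_\phi$ as the perspective function of $\phi$, extended by its recession at the boundary: for $v > 0$, $\ol{\Diverg}_\phi(u|v) = v\,\phi(u/v)$, and the boundary value $u\,\phi'_\infty$ at $v = 0$ (with the convention $0 \cdot \infty = 0$) is exactly the lower semicontinuous extension dictated by the recession function of $\phi$. With this identification, standard perspective calculus immediately gives convexity, lower semicontinuity, positive $1$-homogeneity, and (using $\phi \geq 0$ and hence $\phi'_\infty \in [0,\infty]$) nonnegativity, with domain contained in $[0,\infty)^2$. Since the integrand does not depend on $x$, it is trivially a normal convex integrand in the sense of Definition~\ref{def_integralfunctional}.

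For admissibility, I would exhibit a feasible point: take any $s_0 \in \dom\phi \cap {]0,\infty[}$ (nonempty by the definition of an entropy function) and any strictly positive $h \in \Lun_+(X)$; then $\Diverg_\phi(s_0 h\,|\,h) = \phi(s_0)\,\|h\|_1 < \infty$. Convexity, positive $1$-homogeneity, and weak lower semicontinuity of $\Diverg_\phi$ on $\Lun(X)^2$ then follow from the corresponding pointwise properties of $\ol{\Diverg}_\phi$ by integration (weak lower semicontinuity of integral functionals with nonnegative convex lsc integrand being a consequence of Fatou's lemma, and also encompassed by Proposition~\ref{prop_normalconjug}).

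For the conjugate, Proposition~\ref{prop_normalconjug} reduces the problem to computing the pointwise conjugate $\ol{\Diverg}_\phi^*$. I would write
\[
\ol{\Diverg}_\phi^*(a,b) = \sup_{u,v \geq 0}\,\bigl\{\,au + bv - \ol{\Diverg}_\phi(u|v)\,\bigr\},
\]
and split into the cases $v > 0$ and $v = 0$. On $v > 0$, substituting $s = u/v$ turns the expression into $v\,(as + b - \phi(s))$; taking the supremum first in $v > 0$ gives $+\infty$ unless $as - \phi(s) + b \leq 0$ for every $s \geq 0$, i.e.\ $b \leq -\phi^*(a)$, in which case the value is $0$. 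The boundary direction $v = 0$ contributes $\sup_{u \geq 0}\,u(a - \phi'_\infty)$, which adds nothing new because $\phi^*(a) < \infty$ (required for $b \leq -\phi^*(a)$ to have any solution) already forces $a \leq \phi'_\infty$. This identifies $\ol{\Diverg}_\phi^*$ with $\iota_{B_\phi}$, and Proposition~\ref{prop_normalconjug} then yields the claimed formula for $\Diverg_\phi^*$.

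The only delicate point is keeping the boundary $v = 0$ under control: one must verify both that $(u,0) \mapsto u\,\phi'_\infty$ is the correct lsc extension of the perspective of $\phi$ (so that lower semicontinuity holds globally on $\RR^2$ rather than only on $\{v>0\}$) and that this same boundary direction does not introduce any constraint in the conjugate computation beyond $b \leq -\phi^*(a)$. Both are classical facts about perspective functions, but they are the only places where genuine verification is needed; the rest reduces to applying Proposition~\ref{prop_normalconjug} to lift pointwise identities to the level of integral functionals.
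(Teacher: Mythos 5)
Your proof is correct and follows essentially the same route as the paper: compute the pointwise conjugate $\ol{\Diverg}_\phi^*=\iota_{B_\phi}$ (handling the $v=0$ recession direction), note normality is trivial since the integrand is $x$-independent, exhibit a feasible point, and invoke the integral-functional conjugation machinery of Proposition \ref{prop_normalconjug} (the paper cites Rockafellar's Theorem 3C directly) to lift everything to $\Lun(X)^2$. Your explicit perspective-function framing for convexity, homogeneity and lower semicontinuity is a slightly more self-contained gloss on properties the paper leaves implicit, but it is not a different argument.
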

\begin{proof}
As a preliminary, remark that for $(u,v)\in \RR^2$,
\eq{
\ol{\Diverg}^*(u|v) = \sup_{a,b\in \RR_+} \begin{cases}
b(u\cdot a/b + v - \phi(a/b)) & \text{if $b>0$}\\
a(u-\phi'_\infty) & \text{otherwise}
\end{cases}
 = 
\begin{cases}
0 & \text{if $v\leq \phi^*(u)$}\\
\infty & \text{otherwise.}
\end{cases}
}
Now, the function $(x,u,v)\in X\times\RR^2 \mapsto \ol{\Diverg}_\phi(u,v)$ defined in \eqref{eq_divergencefunctions} is a normal integrand since it does not depend on $x$. Moreover, there exists feasible points for $\Diverg_\phi$ (take $v\in \Lun(X)$ and $u=\alpha v$ where $\alpha\in \dom \phi$) and for the integral functional associated to $\ol{\Diverg}^*_\phi$ (given its expression above). The conclusion follows by \cite[Theorem 3C]{rockafellar1976integral}. %For the experssion of the conjugate, see~\cite{LieroMielkeSavareLong} for more details.
\end{proof}
\begin{proposition}
\label{prop_integralconj}
Let $(X,\d x)$ be a measured space, $v \in \Lun_+(X)$ and $\phi$ a nonnegative entropy function as in Definition \ref{def_entropy}. Then $\Diverg_\phi(\cdot|v)$ is a proper, weakly lower semicontinuous convex function on $\Lun(X)$ and its convex conjugate is given, for $a \in \Linf(X)$, by
\begin{equation*}
\Diverg^*_\phi ( a | v) \eqdef  \int_X \phi^*(a(x)) v(x) \d x + \int_X \iota_{\leq \phi'_\infty}(a(x)) \d x
\end{equation*}
where $\phi^*$ is the convex conjugate of $\phi$.

Moreover, the subdifferential $\partial \Diverg_\phi (\cdot|v)$ at a point $u\in \Lun(X)$ is the set of functions $a\in \Linf(X)$ such that $\phi'_\infty-a$ is nonnegative and such that, for a.e.\ $x$ where $v(x)>0$, $a(x)\in \partial \phi(u(x)/v(x))$ .

Similarly, the subdifferential $\partial \Diverg^*_\phi (\cdot|v)$ at a point $a\in \Linf(X)$ bounded above by $\phi'_\infty$ is the set of nonnegative functions $u\in \Lun(X)$ such that, for a.e.\ $x$, $u(x)\in \partial \phi^*(a(x)) v(x)$ if $v(x)>0$ and $u(x) = 0$ if $v(x)=0$ and $a(x)<\phi'_\infty$.
\end{proposition}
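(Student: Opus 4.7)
The plan is to recognize $\Diverg_\phi(\cdot|v)$ as an integral functional $I_f$ with integrand $f(x,u) \eqdef \ol{\Diverg}_\phi(u\,|\,v(x))$, verify that it is an admissible normal convex integrand in the sense of Definition~\ref{def_integralfunctional}, and then apply Proposition~\ref{prop_normalconjug} together with a pointwise computation of the conjugate and subdifferential of $\ol{\Diverg}_\phi(\cdot|\,v(x))$ for each value of $v(x)$.

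First, $f$ is a normal convex integrand: for each fixed value $v = v(x)\geq 0$, the function $\ol{\Diverg}_\phi(\cdot\,|\,v)$ is the perspective of $\phi$ extended by its recession slope $\phi'_\infty$, hence convex and lsc in $u$; measurability in $x$ is inherited from that of $v$. Admissibility requires the existence of a feasible $u^\star\in \Lun(X)$; by Definition~\ref{def_entropy} there is some $s_0 \in \dom \phi \,\cap\, ]0,\infty[$, and $u^\star \eqdef s_0\, v \in \Lun(X)$ satisfies $I_f(u^\star) = \phi(s_0)\int_X v \,\d x < \infty$. By Proposition~\ref{prop_normalconjug}, $I_f$ is then convex, proper, and weakly lsc on $\Lun(X)$.

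Next I will compute the pointwise conjugate $f^*(x,\cdot)$ case by case. If $v(x) > 0$, the substitution $s = u/v(x)$ gives
\[
f^*(x,a) = \sup_{u\in\RR} au - v(x)\phi(u/v(x)) = v(x)\sup_{s\in\RR}\bigl(as-\phi(s)\bigr) = v(x)\phi^*(a);
\]
since $\dom\phi^* \subseteq (-\infty,\phi'_\infty]$, the value is $+\infty$ as soon as $a > \phi'_\infty$. If $v(x) = 0$, then $f(x,u) = u\,\phi'_\infty$ on $[0,\infty[$ (with the convention $0\cdot\infty = 0$) and $+\infty$ elsewhere, so $f^*(x,a) = \sup_{u\geq 0} u(a-\phi'_\infty) = \iota_{(-\infty,\phi'_\infty]}(a)$. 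Combining both cases yields the claimed formula. The formula $\Diverg_\phi^* = I_{f^*}$ then follows from the same Proposition once a feasible point for $I_{f^*}$ is exhibited: $\phi$ being proper and lsc ensures $\dom \phi^*\neq\emptyset$, and taking the constant $a\equiv a_0 \in \dom\phi^*$ gives $a\in\Linf(X)$ with $I_{f^*}(a) = \phi^*(a_0)\int v\,\d x < \infty$.

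Finally, for the subdifferentials I again invoke Proposition~\ref{prop_normalconjug}, which reduces the problem to pointwise subdifferentials of $f$ and $f^*$. For $\partial_u f(x,\cdot)$: if $v(x)>0$, the chain rule for the perspective gives $\partial_u[v\phi(\cdot/v)](u) = \partial\phi(u/v)$, which lies inside $(-\infty,\phi'_\infty]$; if $v(x)=0$, direct computation yields $\partial_u f(x,0) = (-\infty,\phi'_\infty]$ and $\partial_u f(x,u) = \{\phi'_\infty\}$ for $u>0$ (empty when $\phi'_\infty = \infty$). In all admissible cases the subgradient is bounded above by $\phi'_\infty$, which recovers the stated characterization. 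Dually, for $\partial_a f^*(x,\cdot)$ with $a\leq\phi'_\infty$: if $v(x)>0$, one has $\partial_a[v\phi^*](a) = v\,\partial\phi^*(a)$; if $v(x)=0$ and $a<\phi'_\infty$, then $\partial_a\iota_{\leq\phi'_\infty}(a) = \{0\}$ so $u = 0$; if $v(x)=0$ and $a=\phi'_\infty$, then $\partial_a\iota_{\leq\phi'_\infty}(\phi'_\infty) = [0,\infty[$, placing no constraint beyond $u\geq 0$. This matches the statement.

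The main obstacle is the careful bookkeeping of the degenerate cases $v(x)=0$ (using $\phi'_\infty$ as recession slope together with the convention $0\cdot\infty=0$), plus verifying that the admissibility hypotheses needed to exchange conjugation and integration genuinely hold for arbitrary nonnegative entropy functions; the rest is a direct application of the integral-functional machinery cited in the paper.
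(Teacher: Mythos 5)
Your proposal is correct and follows essentially the same route as the paper: recognize $\Diverg_\phi(\cdot|v)$ as an admissible integral functional with normal convex integrand $(x,u)\mapsto \ol{\Diverg}_\phi(u|v(x))$ (the paper cites \cite[Prop.\ 14.45c]{rockafellar2009variational} for normality) and then invoke the Rockafellar integral-functional machinery (\cite[Thm.\ 3C]{rockafellar1976integral}, packaged here as Proposition~\ref{prop_normalconjug}) to perform conjugation and subdifferentiation pointwise. The only difference is that you spell out the pointwise case analysis for $v(x)>0$ versus $v(x)=0$, which the paper leaves implicit; these computations are accurate.
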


\begin{proof}
By \cite[Proposition 14.45c]{rockafellar2009variational}, $(x,u)\in X\times \RR \mapsto \ol{\Diverg}_\phi(u|v(x))$ is a normal integrand. Then  \cite[Theorem 3C]{rockafellar1976integral} and Corollaries apply and conjugation and subdifferentiation can be performed pointwise.
\end{proof}

%%%%%%%%%%%%%%%%%%%%%%%%%%%%%%%%%%%%%%%%%%%%%%%%%
%%%%%%%%%%%%%%%%%%%%%%%%%%%%%%%%%%%%%%%%%%%%%%%%%
\subsection{Proof of the Iterates for the Barycenter Problems}
\label{sec:AppendixBarycenterIterates}
We explain below how to derive the expression for $h$ which are given in Table \ref{prop_estimatebary}, by applying Proposition \ref{prop_barycenter_general}. Assume that $(s_i)_{i=1}^n\in \RR^n\geq0$ is given. 

\paragraph{Case $\Diverg_\phi = \iota_{\{=\}}$.}
This case is simple because solving \eqref{eq_prox_barycenter}  boils down to solving the one dimensional problem $\min_h \sum \alpha_k \ol{\KL}(h|s_k)$, which is direct with first order optimality conditions.

\paragraph{Case $\Diverg_\phi = \lambda \KL$.}
First remark that the assumption of Proposition \ref{prop_barycenter_general} is satisfied and that $h=0$ if and only if for all $k$, $s_k=0$ (otherwise, the joint subdifferential is empty). 
Since $\phi$ is smooth, its joint subdifferential is a singleton $\partial \ol{\KL}(\tilde{s}|h) = \{(\log(\tilde{s}/h), 1-\tilde{s}/h) \}$ if $\tilde{s},h>0$. Also, since $\ol{\KL}(0|h)=h+ \iota_{[0,\infty[}(h)$, one has $\partial_2 \ol{\KL}(0,h) = \{1\}$ if $h>0$. Thus, optimality conditions in Proposition \ref{prop_barycenter_general} yields the system
\eq{
\begin{cases}
\log \frac{\tilde{s}_k}{h} = \frac\epsilon\la \log \frac{s_k}{\tilde{s}_k} & \text{if $s_k>0$,} \\
\tilde{s}_k = 0 & \text{if $s_k=0$,} \\
\sum \alpha_k (1-\frac{\tilde{s}_k}{h}) = 0\, .
\end{cases}
}

\paragraph{Case $\Diverg_\phi = \lambda \TV$.}
By Proposition \ref{prop_divergnormalint}, one has $\ol{\Diverg}_{\phi_{\TV}}(\tilde{s}|h) = \sup_{(a,b)\in B} a\cdot x + b\cdot y $ with $B = \{ (a,b)\in \RR^2\, ; \, a\leq1, \, b\leq 1, \, a+b\leq 0 \}$. The set of points in $B$ at which this supremum is attained is easy to see graphically and gives the set $\partial \ol{\Diverg}_{\phi_{\TV}}(\tilde{s}|h)$. With the notations of Proposition \ref{prop_barycenter_general}, one has with $a_k=\frac\epsilon\la \log \frac{s_k}{\tilde{s}_k}$,
\begin{align*}
(1)\; \tilde{s}_k > h > 0 &\Leftrightarrow -b_k=a_k=1 &
(2)\;  h > \tilde{s}_k > 0 &\Leftrightarrow -b_k=a_k=-1 \\
(3)\; \tilde{s}_k = h > 0 &\Leftrightarrow -b_k=a_k\in [-1,1] &
(4)\; h > \tilde{s}_k = 0&\Leftrightarrow  b_k = 1 \\
(5)\; \tilde{s}_k > h = 0 &\Leftrightarrow a_k = 1 \text{ and } b_k\leq-1 &
(6)\; \tilde{s}_k = h = 0 &\Leftrightarrow b_k \leq 1 \, .
\end{align*}
Let us first deal with the case $h=0$ (cases (5) and (6)). Condition $\sum \alpha_k b_k=0$ from Proposition \ref{prop_barycenter_general} says that it is the case if and only if $\sum_{k\notin I_+} \alpha_k \geq \sum_{k \in I_+} \alpha_k$. 
Now assume that $h>0$. If $\tilde{s}_k>0$ (cases (1), (2) and (3)) then $b_k$ can be expressed as $\max ( -1 , \min ( 1 , \frac{\epsilon}{\lambda } \log \frac{h}{s_k}))$ otherwise $b_k=1$. The implicit expression given for $h$ is thus the condition $\sum \alpha_k b_k =0$.

\paragraph{Case $\Diverg_\phi = \RG_{[\beta_1, \beta_2]}$.}
In this case, $\ol{\Diverg}_\phi$ is the support function of $ B= \{ (a,b)\in \RR^2\, ; \,\text{for } i\in \{1,2\}, b\leq -\beta_i \cdot a\} $. With the notations of Proposition \ref{prop_barycenter_general}, one has with $a_k=\frac\epsilon\la \log \frac{s_k}{\tilde{s}_k}$, 
\begin{align*}
(1)\; 0 < \beta_1 h < \tilde{s}_k < \beta_2 h &\Leftrightarrow a_k = b_k = 0 &
(2)\; 0 < \beta_1 h = \tilde{s}_k &\Leftrightarrow b_k = -\beta_1 a_k \\
(3)\; 0 < \beta_2 h = \tilde{s}_k & \Leftrightarrow b_k = -\beta_2 a_k &
(4)\; 0 = h = \tilde{s}_k & \Leftrightarrow (b_k , a_k) \in B \, .
\end{align*}
If $s_k=0$ for some $k\in \{1, \dots, n\}$ then $h=0$ (this is the only feasible point). Otherwise, $h>0$ and the condition $\sum \alpha_k b_k = 0$ gives the implicit equation.

\bibliographystyle{amsplain}
\bibliography{EntropicNumeric}

\end{document}